\titleformat{\subsection}{\it}{\thesubsection.\enspace}{1.5pt}{}
\titleformat{\subsubsection}{\it}{\thesubsubsection.\enspace}{1.5pt}{}
\newtheorem{theo}{Theorem}[section]
\newtheorem{lemm}[theo]{Lemma}
\newtheorem{defi}[theo]{Definition}
\newtheorem{rema}{Remark}[section]
\numberwithin{equation}{section}
\def\th2{\frac{\theta}{2}}
\begin{document}
\title{ Effect of weak elasticity on the  Kelvin-Helmholtz instability \hspace{-4mm}}

	\author{ Binqiang Xie $^{\dag}$, Boling Guo$^{\ddag}$ , Bin Zhao $^{\ddag,*}$ \\[10pt]
	\small {$^\dag$ School of Mathematics and Statistics,}\\
	\small { Guangdong University of Technology, Guangzhou,   510006, China}
	\\
	\small {$^\ddag$  Institute of Applied Physics and Computational Mathematics, Beijing, 100088,  China}
}

\footnotetext{*Corresponding author.\\ E-mail addresses:  \it zhaobin2017math@163.com(B. Zhao), \it xbqmath@gdut.edu.cn(B.Q. Xie), \it  gbl@iapcm.ac.cn(B.L. Guo).}

\date{}

\maketitle

\begin{abstract}
In this paper, we present an analysis of the Kelvin-Helmholtz instability in two-dimensional ideal 
 compressible elastic flows, providing a rigorous confirmation that weak elasticity has a destabilizing effect on the Kelvin-Helmholtz instability. There are two critical velocities, $U_{\text{low}}$ and $U_{\text{upp}}$, where $U_{\text{low}}$ and $U_{\text{upp}}$ represent the lower and upper critical velocities, respectively. We demonstrate that when the magnitude of the rectilinear solutions  satisfies $U_{\text{low}}+c\epsilon_{0}\le  |\dot{v}^{+}_{1}| \le  U_{\text{upp}}-c\epsilon_{0}$, the linear and nonlinear ill-posedness of the piecewise smooth solutions of the Kelvin-Helmholtz problem for two-dimensional ideal compressible elastic fluids is established uniformly, where $c$ is the sound speed and $\epsilon_{0}$ is some small enough positive constant.

\vspace*{5pt}
\noindent{\it {\rm Keywords}}:
Free boundary, Kelvin-Helmholtz instability, Two-dimensional elastic flow.

\vspace*{5pt}
\noindent{\it {\rm 2020 Mathematics Subject Classification}}:
35Q35, 35D35.
\end{abstract}


\section{Introduction}
\quad
In this paper, we will prove that weak elasticity has a destabilizing effect on the Kelvin-Helmholtz instability for the ideal compressible elastic fluids.  The two-dimensional compressible inviscid elastodynamics in the domain $\mathbb{R}^{2}$ for time $t\geq0$  is considered as the following form:
\begin{equation}\label{1.1}
	\left\{
	\begin{aligned}
		&\partial_{t} \rho + {\rm div} (\rho u)=0,\\
		&\partial_{t}(\rho v) + {\rm div}(\rho u \otimes u) + \nabla p={\rm div} (\rho F F^{T}),\\
		&\partial_{t}(\rho F_{j})+ {\rm div}( u\otimes \rho F_{j}-\rho F_{j}\otimes u )=0,
	\end{aligned}
	\right.
\end{equation}
where the functions $\rho$, $u$, $p$ and $F_{j}$
represent the fluid density, the velocity, the pressure and the $j$th column of the deformation gradient $F_{ij}, i,j=1,2$ respectively, $p$ is a $C^{\infty}$ function of $\rho$, defined on $(0,+ \infty)$, and such that $p^{\prime}(\rho)>0$ for all $\rho$. The speed of sound $c(\rho)$  in the fluid is defined by the relation:
\begin{equation}\label{1.2}
	\forall \rho >0, ~c(\rho)= \sqrt{p^{\prime}(\rho)}.
\end{equation}
The system \eqref{1.1} is supplemented by the divergence constraint
\begin{equation}\label{1.3}
	{\rm div}(\rho F_{j})=0,~ j=1,2
\end{equation}
and this property holds at any time if it is satisfied initially.

Assume $U(t,x_{1},x_{2})= (\rho,u, F)(t,x_{1},x_{2})$ to be the solution of the system \eqref{1.1} which is smooth on each side of  the surface  $\Gamma(t):= \{x_{2}=f(t,x_{1}) \}$, here function $f$ describing the discontinuity front is part of the unknown of the problem, i.e. this is a free boundary problem and $x_{1}$ is tangential coordinate. The whole space $\mathbb{R}^{2}$ can be seperated by $\Gamma(t)$ into the upper domain $\Omega^+(t)$ and the lower domain $\Omega^-(t)$, which are defined by $$\Omega^+(t):=\{x_2>f(t,x_1)\},$$
and  $$\Omega^-(t):=\{x_2<f(t,x_1)\}.$$\\
We denote the solutions of the upper and lower fluids by
\begin{equation} \label{1.4}
	U=\left\{
	\begin{aligned}
		& U^{+}(t,x_{1},x_{2}),  &\text{in}~ \Omega^+(t) ,\\
		&U^{-}(t,x_{1},x_{2}),&\text{in}~ \Omega^-(t),
	\end{aligned}
	\right.
\end{equation}
where $U^{\pm}= (\rho^{\pm},u^{\pm}, F^{\pm})$. As we are interested in the smooth solutions of \eqref{1.1} on either side of $\Gamma(t)$, such piecewise smooth solutions $U$ to be weak solutions of \eqref{1.1} should satisfy the  Rankine-Hugoniot conditions:
\begin{equation}\label{1.5}
\left\{
\begin{aligned}
&\partial_{t} f [\rho] - [\rho u \cdot n]=0,\\
&\partial_{t}  f  [\rho u]- [(\rho u \cdot n) u]- [p] n + [\rho F F^{T} n]=0,\\
&\partial_{t} f [\rho F_{j}] - [(\rho u \cdot n)\rho F_{j}]+ [(\rho F_{j}\cdot n)u]=0,\\
&[\rho F_{j}\cdot n]=0,
\end{aligned}
\right.
\end{equation}
where the notation $[\phi]= \phi^{+}|_{\Gamma(t)}-\phi^{-}|_{\Gamma(t)}$ denotes the jump of a quantity $\phi$ across $\Gamma(t)$, $n=(-\partial_{1} f, 1)$  is a normal vector to $\Gamma(t)$ with $\partial_{1} = \partial/\partial x_{1}$.  If we denote mass transfer flux by $S^{\pm}=\rho^{\pm} (\partial_{t} f-  u^{\pm} \cdot n)$, in according with the first condition in \eqref{1.5}, we get $[S]=0$. In order to induce  Kelvin-Helmholtz instability, we assume that  $S^{\pm}=0$ and $F^{\pm}_{j}\cdot n=0$ on $\Gamma(t)$.
Therefore, for   Kelvin-Helmholtz instability,  the Rankine-Hugoniot conditions \eqref{1.5} give the boundary conditions
\begin{equation}\label{1.6}
	p^{+}=p^{-}, ~~\partial_{t} f=u^{+}\cdot n=u^{+}\cdot n,~~F^{+}_{j} \cdot n=F^{-}_{j} \cdot n=0~~on~~\Gamma(t).
\end{equation}

To complete the statement of the problem, we specify initial conditions. For the initial front $f(0)$,  we give the initial interface $\Gamma(0)=\{x_{2}=f(0,x_{1})=f_{0}(x_{1})\}$, which yields the open sets $\Omega^{\pm}(0)$ on which we specify the initial density, velocity and  deformation gradient, $\rho^{\pm}(0)=\rho^{\pm}_{0}: \Omega^{\pm}(0) \rightarrow \mathbb{R}^{+}$, $u^{\pm}(0)=u^{\pm}_{0}: \Omega^{\pm}(0) \rightarrow \mathbb{R}^{2}$ and  $F^{\pm}(0)=F^{\pm}_{0}: \Omega^{\pm}(0) \rightarrow \mathbb{R}^{2\times 2}$, respectively.

Because $p^{\prime}(\rho)>0$, the function $p=p(\rho)$ can be inverted, allowing us to write $\rho=\rho(p)$. Given a positive constant $\dot{\rho}>0$, we introduce the quantity $\sigma(p)=\log(\rho(p)/\dot{\rho})$ and consider $\sigma$ as a new unknown quantity.  In terms of $(\sigma,u,F)$, the equations \eqref{1.1} become
\begin{equation}\label{1.8}
	\begin{cases}
		\partial_{t} \sigma+\left(u \cdot \nabla\right) \sigma+  \nabla\cdot u=0, \\ 
		\partial_{t} u+\left(u \cdot \nabla\right) u+ c^{2} \nabla \sigma=\sum_{j=1}^{2} (F_{j}\cdot \nabla ) F_{j} , \\
		\partial_{t} F_{j}+\left(u \cdot \nabla\right) F_{j}=  \left(F_{j} \cdot \nabla \right) u,
	\end{cases}
\end{equation}
where the speed of sound is considered as a function of $\sigma$, i.e., $c=c(\sigma)$.

The jump conditions \eqref{1.6} may be rewritten as 
\begin{equation} \label{1.9}
	u^{+}\cdot n= u^{-}\cdot n, ~\sigma^{+}= \sigma^{-},~F^{+}_{j} \cdot n=F^{-}_{j} \cdot n=0~\mathrm{on}~~\Gamma(t).
\end{equation}

\subsection{Rectilinear solution}

\quad  It is easy to see that the system \eqref{1.1}-\eqref{1.6} admits contant solutions $\dot{U}=(\dot{f}, \dot{\rho},\dot{u},\dot{F})$ with the corresponding interface satisfying $\Gamma:=\{x_{2}=0\}$ for all $t\geq0$. Then the corresponding domain satisfies $\Omega^{+}=\Omega^{+}(t)=\mathbb{R}\times (0,\infty)$ and  $\Omega^{-}=\Omega^{-}(t)=\mathbb{R}\times (-\infty,0)$ for all $t\geq0$.  More precisely, the front is flat, i.e., $\dot{f}=0$. To make sure the constant   density $\dot{\rho}^{\pm}$ satisfy the jump condition \eqref{1.5}, we must impose that 
\begin{equation} \label{1.10}
	\dot{\rho}^{+}=	\dot{\rho}^{-}=:\dot{\rho},
\end{equation}
where $\dot{\rho}$ is a positive constant. Under a Galiean transformation, we may assume that the upper fluid moves in the horizontal direction with some constant velocity and the lower fluid moves by the same constant velocity in the opposite direction, i.e, the  constant velocity field $\dot{u}$  is the following form:
\begin{equation}\label{1.11}
	\dot{u}=\left\{
	\begin{aligned}
		&(\dot{u}^{+}_{1},0)&x_2\geq0,\\
		&(\dot{u}^{-}_{1},0)&x_2<0,
	\end{aligned}
	\right.
\end{equation}\label{1.12}
where the constants $\dot{u}^{+}_{1},\dot{u}^{-}_{1}$ satisfy
\begin{equation}
	\dot{u}^{+}_{1}=-\dot{u}^{-}_{1}.
\end{equation}

Under the change of the scale of measurement, the constant deformation gradient $\dot{F}$ is the following form:
\begin{equation} \label{1.13}
	\dot{F}=\left\{
	\begin{aligned}
		&\left[ {\begin{array}{cc}\dot{F}^{+}_{11} & \dot{F}^{+}_{12} \\0 & 0 \\\end{array} } \right]&x_2\geq0,\\
		&\left[ {\begin{array}{cc}\dot{F}^{-}_{11} & \dot{F}^{-}_{12} \\0 & 0 \\\end{array} } \right]&x_2<0,
	\end{aligned}
	\right.
\end{equation}
where the constants $\dot{F}^{+}_{11},\dot{F}^{+}_{12},\dot{F}^{-}_{11},\dot{F}^{-}_{12}$  satisfy
\begin{equation}\label{1.14}
	\dot{F}^{+}_{11}=-\dot{F}^{-}_{11},~	\dot{F}^{+}_{12}=-\dot{F}^{-}_{12}.
\end{equation}

\subsection{History result}
\quad In Chandrasekhar's book \cite{Chandrasekhar}, the stability problem of superposed fluids can be divided into two kinds, the first kind of instability is called Rayleigh-Taylor instability. There are lot of works about mathematical analysis of the Rayleigh-Taylor instability problem (\cite{Castro}, \cite{Guo11}, \cite{Guo1},\cite{Guo2},\cite{Guo3}, \cite{Jiang}). Ebin in \cite{E2} proved the instability for the Rayleigh-Taylor problem of  the incompressible Euler equation, while Guo and Tice in \cite{Guo1} showed the instability of this problem for the compressible inviscid case. Moreover, the Rayleigh-Taylor instability for the viscous compressible fluids was proved in \cite{Guo2} and for the inhomogeneous Euler equation in \cite{Guo3}. The second type of instability arises when the different layer of stratified heterogeneous fluids are in relative horizontal motion. In this paper, we study the second kind.

\par The stability problem of two fluids in a relative motion has attracted a wide interest of researchers of various fields. This type of instability is well known as the Kelvin-Helmholtz instability which was first studied
by Hermann von Helmholtz in  \cite{Helmholtz} and by William Thomson (Lord Kelvin) in \cite{Kelvin}. The Kelvin-Helmholtz instability is important in understanding a variety of space and astrophysical phenomena involving sheared plasma flow such as the stability of the
interface between the solar wind and the magnetosphere (\cite{Dungey},\cite{Ha}, \cite{Parker}), interaction between adjacent streams of different velocities in the solar wind \cite{Sturrock}  and the dynamic structure of cometary tails \cite{Ershkovich}.

There are a lot of  progress on the well-posedness of solutions for the Kelvin-Helmholtz problem of the ideal fluids. The Kelvin-Helmholtz instability configuration is also  known in literature as the ‘vortex sheets’, as their vorticity distribution is described by a $\delta$-function supported by a discontinuity in the velocity field at the sheet location. In the pioneer works \cite{Coulombel}, \cite{Secchi}, Coulombel and Secchi proved the  nonlinear stability of vortex sheets  for the ideal compressible flows by using a micro-local analysis and Nash-Moser method. Later on, Morando, Trebeschi and Wang \cite{Trebeschi1}, \cite{Trebeschi2} generalized this result to the two-dimensional ideal nonisentropic compressible flows. Their method in \cite{Coulombel} also has been used to deal with
the two-dimensional magnetohydrodynamics (MHD) flows, a necessary and sufficient condition is obtained for the linear stability of the rectilinear vortex sheets in Wang and Yu \cite{Yu1}.  Moreover, for the three-dimensional compressible magnetohydrodynamics flows, Trakhinin \cite{Trakhinin2}, \cite{Trakhinin} and Chen-Wang \cite{Wang} adopted a different symmetrization approach to prove the  linear and nonlinear stability of compressible vortex sheets. These results  indicate the stabilization effects of the magnetic fields on the current vortex sheets. On the other hand, these  stabilization effects of the magnetic fields on the current vortex sheets are also shown in the incompressible case.  Under the Syrovatskij stability condition \cite{Syrovatskij}, Morando, Trakhinin, and Trebeschi \cite{Morando2} proved an a priori estimate with a loss of three-order derivatives for the linearized system. Trakhinin \cite{Trakhinin} proved an a priori estimate without loss of derivative from data for the linearized system with variable coefficients under a strong stability condition. In a recent work \cite{Coulombel2}, Coulombel, Morando, Secchi, and Trebeschi proved an
a priori estimate without loss of derivatives for the nonlinear current-vortex sheet problem under the strong stability condition. Nonlinear stability of the incompressible current-vortex sheet problem was proved by Sun-Wang-Zhang under Syrovatskij stability condition in \cite{Sun}.  For vortex sheets  in elastic flows,  Chen, Hu and Wang (\cite{Hu}, \cite{Chen})  proved strong elasticity can inhibit  Kelvin-Helmholtz  vortices. Further, Chen, Huang, Wang and Yuan proved the stabilization effect of elasticity on three-dimensional compressible vortex sheets in \cite{chen1}.

 Meanwhile, there are some progress on the ill-posedness of solutions for the Kelvin-Helmholtz problem of the ideal fluids.  For Kelvin-Helmholtz instability in the incompressible Euler flows,  Ebin in \cite{E2} proved linear and nonlinear ill-posedness of the  well-known Kelvin-Helmholtz problem. Recently we prove  linear and nonlinear ill-posedness of the  Kelvin-Helmholtz problem for incompressible MHD fluids \cite{Xie} under the condition violating the Syrovatskij stability condition. On the other hand, for Kelvin-Helmholtz instability in the compressible Euler flows.  It has been well-known by Landau \cite{L} that the Kelvin-Helmholtz instability is suppressed in compressible supersonic flows. By the normal  mode analysis, it is also shown in \cite{Fejer}, \cite{Miles} that the linear  Kelvin-Helmholtz instability can be inhibitied  when the Mach number  $M:=\frac{|\dot{v}^{+}_{1}|}{c}>\sqrt{2}$ and the solutions of the linear equation are violently unstable when $M< \sqrt{2}$.  Our work \cite{Xie1} proved ill-posedness of Kelvin-Helmholtz problem  for  the nonlinear  Euler fluids
exhibit the same ill-posedness as their linearized counterparts in \cite{Fejer}, \cite{Miles} under the condition $\epsilon_{0}\leq M\le \sqrt{2}-\epsilon_{0}$, where $\epsilon_{0}$ is a small but fixed number.
 
Our aim of this paper is  to prove weak elasticity has a destabilizing effect on the Kelvin-Helmholtz instability of the two-dimensional ideal compressible elastic fluids. By the eigenvalue analysis for the linearized system to the Kelvin-Helmholtz problem of the two-dimensional ideal compressible elastic fluids, we show that  the front will grow instantaneously with time envloves for the high frequency case when $U_{low}<|\dot{v}^{+}_{1}|<U_{upp}$.  Inspired by the works \cite{Guo1}, \cite{Morando} and \cite{Xie1}, we prove the
 ill-posedness of the nonlinear system \eqref{1.1}-\eqref{1.6} uniformly when the rectilinear solutions  satisfy $U_{low}+c\epsilon_{0}\le |\dot{v}^{+}_{1}|\le U_{upp}-c\epsilon_{0}$ with $U_{low}:=\sqrt{((\dot{G}^{+}_{11})^{2}+(\dot{G}^{+}_{12})^{2})}$ and $U_{upp}:=\sqrt{2c^{2}+ ((\dot{G}^{+}_{11})^{2}+(\dot{G}^{+}_{12})^{2})}$.

The rest of the paper is organized as follows. In Section 2, we reformulate the system in a new coordinate and derive the wave equation of the pressure. Besides, we state the main result in the flatting coordinate. In Section 3, we give an analysis of the root for the symbol of the linearized system to the Kelvin-Helmholtz problem of the two-dimensional ideal compressible elastic fluids, which help us identify the instability condition that the rectilinear solutions need satisfy. In Section 4 and Section 5,  we prove the linear  and nonlinear ill-posedness of Kelvin-Helmholtz problem for the ideal compressible elastic fluids.

\section{The new formulation and main result}
\subsection{The new formulation}
\quad  Our analysis in this paper relies on the reformulation of the problem under consideration in new coordinates.  To begin with, we define the fixed domains $\Omega^{\pm}$ as
\begin{equation}\label{2.1}
	\begin{aligned}
		&\Omega^{+}:=\left\{x\in \mathbb{R}^{2}: x_{2}> 0\right\}, \\
		&\Omega^{-}:=\left\{x\in \mathbb{R}^{2}: x_{2}< 0\right\}.
	\end{aligned}
\end{equation}
Define the fixed  boundary $\Gamma$ as
$$
\Gamma:=\left\{x\in \mathbb{R}^{2}: x_{2}= 0\right\}.
$$To reduce our free boundary problem to the fixed domain $\Omega^{\pm}$, we consider a change of variables by $(t,x)\mapsto (t,x_{1}, x_{2}+ \psi(t,x))$ with $x=(x_1,x_2)$ which maps the fixed domains into the free boundary domains.   We   construct such  $\psi$ via multiplying  the front $f$ by a smooth cut-off function depending on $x_{2}$:
\begin{equation}\label{2.2}
	\psi(t,x_{1}, x_{2})= \theta(\frac{x_{2}}{3(1+a)})f(t,x_{1}),~a=\|f_{0}\|_{L^{\infty}(\mathbb{R})},
\end{equation}
where $\theta\in C^{\infty}_{c}(\mathbb{R})$ is a smooth cut-off function with $0\leq \theta\leq 1$, $\theta(x_{2})=1$, for $|x_{2}|\leq 1$, $\theta(x_{2})=0$ for $|x_{2}|\geq 3$, and $ |\partial_{2} \theta(x_{2})|\leq 1$ for all $x_{2}\in \mathbb{R}$,  writing $\partial_{j} = \partial/\partial x_{j}$.   We also assume 
\begin{equation}\label{2.3}
	\|f_{0}\|_{L^{\infty}(\mathbb{R})}\leq 1.
\end{equation}
Moreover, we have
\begin{equation}\label{2.4}
	\begin{aligned}
		&\psi(t,x_{1}, 0)=f(t,x_{1}), \\
		&\partial_{2} \psi(t,x_{1}, 0)=0,\\
		&|\partial_{2} \psi | \leq \frac{1}{3(1+a)}|f|.
	\end{aligned}
\end{equation}

The change of variables that reduces the free boundary problem \eqref{1.1} to the fixed domain $\Omega^{\pm}$ is given in the following lemma.
\begin{lemm}
	Define the function $\Psi$ by
	\begin{equation}\label{2.5}
		\Psi(x_1, x_2):=\left(x_{1}, x_{2}+\psi(t, x)\right), \quad(t, x) \in[0, T] \times \Omega.
	\end{equation}
	Then  $\Psi: (x_1,x_2)\mapsto (x_{1}, x_{2}+ \psi(t,x)) $ is a diffeomorphism transform from  $\Omega^{\pm}$ to $\Omega^{\pm}(t)$ for all $t \in[0, T]$. In particular, we emphasis that $\Psi|_{t=0}: \bar{\Omega}^{\pm }\mapsto \bar{\Omega}^{\pm }(0) $ is a diffeomorphism transform. 
\end{lemm}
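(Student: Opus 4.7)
The plan is to check the three standard ingredients for a diffeomorphism: smoothness, a positive lower bound on the Jacobian, and a global bijectivity argument. Smoothness of $\Psi$ follows immediately from smoothness of $\psi$. The Jacobian matrix of $\Psi$ is lower triangular with diagonal entries $1$ and $1+\partial_2\psi$, so
\[
\det D\Psi(t,x) = 1 + \partial_2 \psi(t,x).
\]
Using the third line of \eqref{2.4}, $|\partial_2 \psi| \le |f|/(3(1+a))$; provided one assumes $\|f(t,\cdot)\|_{L^\infty(\mathbb{R})} \le 1+a$ (which is true at $t=0$ by \eqref{2.3} and, by continuity of $f$ in time, persists for small $T$), this yields $\det D\Psi \ge 2/3 > 0$ uniformly. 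The inverse function theorem then gives that $\Psi$ is a local diffeomorphism.

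For global bijectivity I would fix $x_1$ and study the one-dimensional map $x_2 \mapsto x_2 + \psi(t,x_1,x_2)$. Its derivative is bounded below by $2/3$, so it is strictly increasing and hence injective. Because the cutoff $\theta(x_2/3(1+a))$ vanishes for $|x_2|\ge 9(1+a)$, the map agrees with $x_2\mapsto x_2$ outside a compact $x_2$-range, so it tends to $\pm\infty$ as $x_2 \to \pm\infty$. Thus it is a bijection of $\mathbb{R}$ onto itself, and since $x_1$ is untouched, $\Psi$ is a bijection of $\mathbb{R}^2$.

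To verify that $\Psi$ actually carries $\Omega^\pm$ onto $\Omega^\pm(t)$ rather than mixing them, I would use the fundamental theorem of calculus combined with $\psi(t,x_1,0)=f(t,x_1)$ from \eqref{2.4}:
\[
\Psi_2(t,x_1,x_2) - f(t,x_1) \;=\; \int_0^{x_2}\!\bigl(1+\partial_2\psi(t,x_1,s)\bigr)\,ds \;\ge\; \tfrac{2}{3}\,x_2,
\]
which is positive for $x_2>0$ and negative for $x_2<0$. Combined with bijectivity of $\Psi$ on $\mathbb{R}^2$, this shows $\Psi(\Omega^\pm)=\Omega^\pm(t)$, while $\Psi|_\Gamma$ is the graph parametrization $(x_1,0)\mapsto (x_1,f(t,x_1))$ of $\Gamma(t)$. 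The assertion at $t=0$ is obtained by running the same estimates at $t=0$; the bound \eqref{2.3} gives $\|f(0,\cdot)\|_{L^\infty}=a\le 1$ unconditionally, and the strict Jacobian lower bound together with the monotonicity of the lifting extends to the closed half-planes, so $\Psi|_{t=0}:\bar\Omega^\pm\to\bar\Omega^\pm(0)$ is a homeomorphism that is smooth with smooth inverse on each closed half-plane.

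The main obstacle—a very mild one at this stage—is the a priori control $\|f(t,\cdot)\|_{L^\infty}\le 1+a$ needed for the Jacobian estimate to hold uniformly on $[0,T]$. This is a structural hypothesis that should either be built into the class of solutions considered or guaranteed by choosing $T$ small; beyond that, the argument is a direct combination of the inverse function theorem with the explicit triangular/one-dimensional structure of $\Psi$.
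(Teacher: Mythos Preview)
Your argument is correct and follows essentially the same approach as the paper: both hinge on the lower bound for $1+\partial_2\psi$ coming from \eqref{2.4} together with an a priori $L^\infty$ bound on $f$ valid on a short time interval. Your write-up is in fact more complete than the paper's, which simply records the Jacobian estimate $\partial_2\Psi_2\ge 1/3$ (using $\sup_{[0,T]}\|f\|_{L^\infty}<2$) and asserts the diffeomorphism conclusion without spelling out the global bijectivity or the mapping of the half-planes.
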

\begin{proof}
	Since $\|f_{0}\|_{L^{\infty}(\mathbb{R})}\leq 1$, one can prove that there exists some $T>0$ such that $ \sup_{[0,T]} \|f\|_{L^{\infty}}<2$, the free interface is still a graph within the time interval $[0,T]$	and
	$$
	\begin{aligned}
		\partial_{2} \Psi_{2}(t, x) & =1+\partial_{2} \psi(t, x) \geq 1-\frac{1}{3}\times  2 = \frac{1}{3},
	\end{aligned}
	$$
	which ensure that $\Psi: (x_{1}, x_{2})\mapsto (x_{1}, x_{2}+ \psi(t,x)) $ is a diffeomorphism transform from  $\Omega^{\pm}$ to $\Omega^{\pm}(t)$ for all $t \in[0, T]$.
\end{proof}

We introduce the following operator notation
$$
\begin{array}{ll}
	A=[D \Psi]^{-1}=\left(\begin{array}{ccc}
		1 & 0  \\
		-\partial_{1} \psi / J  & 1 / J
	\end{array}\right), 
\end{array}
$$
$$
a=J A =\left(\begin{array}{ccc}
	J & 0  \\
	-\partial_{1} \psi &  1
\end{array}\right), 
$$ and $ J=\operatorname{det}[D \Psi]=1+\partial_{2} \psi$.
Now we may reduce the free boundary problem \eqref{1.8}-\eqref{1.9} to a problem in the fixed domain $\Omega^{\pm}$ by the change of variables in Lemma 1.1. Let us set 
\begin{equation}\label{2.6}
	\begin{aligned}
		&v^{ \pm}(t, x):=u^{ \pm}(t, \Psi(t, x)), G^{ \pm}(t, x):=F^{ \pm}(t, \Psi(t, x)), \\
		&q^{ \pm}(t, x):=p^{ \pm}(t, \Psi(t, x)),\quad \varrho^{ \pm}(t, x):=\rho^{\pm}(t, \Psi(t, x)),\\
		&\quad h^{ \pm}(t, x):=\sigma^{\pm}(t, \Psi(t, x)).
	\end{aligned}
\end{equation}
Throughout the rest paper, an equation on $\Omega$ means that the equation holds in both $\Omega^{+}$ and $\Omega^{-}$.  For convenience, we consolidate notation by writing $v$, $G_{j}$, $q$, $\varrho$, $h$ to refer to $v^{\pm}$, $G_{j}^{\pm}$, $q^{\pm}$,$\varrho^{\pm}$, $h^{\pm}$ except when necessary to distinguish the two.

We also introduce the notation:
\begin{equation}\label{2.7}
	\nabla^{\psi}= A^{T} \nabla, \Delta^{\psi}= (A^{T} \nabla )\cdot  (A^{T} \nabla).
\end{equation}
Then equations \eqref{1.8}  and boundary condition \eqref{1.9}  can be reformulated as: 
\begin{equation}\label{2.8}
	\begin{cases}
		\partial_{t} h+(\breve{v}\cdot \nabla) h+  \nabla^{\psi} \cdot v=0 & \text { on } \Omega,\\ 
		\partial_{t} v+(\breve{v} \cdot \nabla) v+ c^{2} \nabla^{\psi} h=\sum_{j=1}^{2} (\breve{G}_{j}\cdot \nabla ) G_{j}  & \text { on } \Omega,\\
		\partial_{t} G_{j}+\left(\breve{v} \cdot \nabla\right) G_{j} =(\breve{G}_{j}\cdot \nabla ) v & \text { on } \Omega, \\ 
		\nabla^{\psi} \cdot (\varrho G_{j})=0 & \text { on } \Omega, \\ 
		\partial_{t} f=v\cdot n & \text { on } \Gamma,  \\
		[v\cdot n]=0,~\quad[h]=0,~\quad G_{j}\cdot n=0 & \text { on } \Gamma.
	\end{cases}
\end{equation}
with the initial data
\begin{equation*}
	\begin{cases}
	h|_{ t=0}=h_{0},\quad  v|_{ t=0}=v_{0}, \quad G_{j}|_{ t=0}=G_{0,j} & \text {on } \Omega, \\ 
		f|_{ t=0}=f_{0} & \text { on } \Gamma .
	\end{cases}
\end{equation*}
where  we have define
\begin{equation}\label{2.9}
	\begin{aligned}
		&	\breve{v} :=A v-\left(0, \partial_{t} \psi / J\right)=\left(v_{1}, \left(v \cdot n-\partial_{t} \psi\right) / J\right),\\
		& \breve{G}_{j} :=A G_{j}=\left(G_{1j}, G_{j} \cdot n / J\right).
	\end{aligned}
\end{equation}
Notice that
\begin{equation}\label{2.10}
	J=1, \quad	\breve{v}_{2}=0,\quad 	\breve {G}_{2j}=0 \quad \text { on } \Gamma.
\end{equation}

	The initial data are required to satisfies
\begin{equation}\label{2.11}
	\begin{aligned}
		&	\nabla^{\psi_0}\cdot(\rho_{0} G_{0,j})=0 &\text{in} ~\Omega,\\
		& h^{+}_{0}=	h^{-}_{0}, ~v^{+}_{0}\cdot n_{0}=	v^{-}_{0}\cdot n_{0},~G^{+}_{0,j} \cdot n_{0}=G^{-}_{0,j} \cdot n_{0}=0 &\text{on}~ \Gamma.
	\end{aligned}
\end{equation}

Since we are interested in Kelvin-Helmholtz instability, the instability behavior firstly happens on the boundary.  To see this,  we are going to derive an second order evolution equation for the front $f$ on the fixed boundary $\Gamma$. By using the  momentum equation of  \eqref{2.8}, we deduce that 
\begin{equation}\label{2.12}
	\begin{aligned}
		\partial^{2}_{t} f =&\partial_{t} v^{+}\cdot n + v^{+} \cdot \partial_{t} n \\
		=& -(\left(\breve{v}^{+} \cdot \nabla\right) v^{+}-\sum_{j=1}^{2} (\breve{G}^{+}_{j}\cdot \nabla ) G^{+}_{j}+ c^{2}  \nabla^{\psi} h^{ +} )\cdot n- v^{+} \cdot (\partial_{1}\partial_{t} f, 0)\\
		=& -v^{+}_{1}\partial_{1} v^{+}\cdot n+\sum_{j=1}^{2} G^{+}_{1j}\partial_{1} G^{+}_{j}\cdot n -c^{2} \nabla^{\psi} h^{+}\cdot n
		-v^{+}_{1} \partial_{1}\partial_{t} f \\
		=& v^{+}_{1}\partial_{1}  n \cdot v^{+}- v^{+}_{1}\partial_{1} \partial_{t} f+\sum_{j=1}^{2} G^{+}_{1j}\partial_{1} G^{+}_{j}\cdot n+  c^{2} \nabla^{\psi} h^{+}\cdot n
		-v^{+}_{1} \partial_{1}\partial_{t} f \\
		=& -2v^{+}_{1}\partial_{1} \partial_{t} f  -  c^{2} \nabla^{\psi} h^{+}\cdot n - (v_{1}^{+})^{2} \partial^{2}_{11} f + \sum_{j=1}^{2} (G^{+}_{1j})^{2} \partial^{2}_{11} f \quad \text{on} \quad \Gamma.
	\end{aligned}
\end{equation}
Similarly, we can also derive an evolution equation of $f$ from the negative part:
\begin{equation}\label{2.13}
	\begin{aligned}
		\partial^{2}_{t} f
		=-2v^{-}_{1}\partial_{1} \partial_{t} f  -  c^{2} \nabla^{\psi} h^{-}\cdot n - (v_{1}^{-})^{2} \partial^{2}_{11} f + \sum_{j=1}^{2} (G^{-}_{1j})^{2} \partial^{2}_{11}  f ~~~on~ \Gamma.
	\end{aligned}
\end{equation}
Therefore summing up the $``+"$ equation \eqref{2.12} and $``-"$ equation \eqref{2.13} to get
\begin{equation}\label{2.14}
	\begin{aligned}
		&\partial^{2}_{t} f+ (v^{+}_{1}+v^{-}_{1})\partial_{1} \partial_{t} f +\frac{1}{2}((c^{+})^{2} \nabla^{\psi} h^{+}\cdot n+(c^{-})^{2} \nabla^{\psi} h^{-}\cdot n)\\
		& + \frac{1}{2}((v_{1}^{+})^{2}+(v_{1}^{-})^{2}) \partial^{2}_{11} f -\frac{1}{2} \sum_{j=1}^{2} ((G^{+}_{1j})^{2}+(G^{-}_{1j})^{2}) \partial^{2}_{11}  f=0 ~~~on~ \Gamma.
	\end{aligned}
\end{equation}

\subsection{The wave equation for $h$}
\quad

Applying the equation $\partial_{t} +\breve{v} \cdot \nabla$ to the first equation of \eqref{2.8} and $\nabla^{\psi} \cdot$ to the second one of \eqref{2.8} gives
\begin{equation}\label{2.15}
	\left\{
	\begin{aligned}
		&(\partial_{t} +\breve{v}\cdot \nabla)^{2} h+ (\partial_{t} +\breve{v}\cdot \nabla)\nabla^{\psi} \cdot v=0,\\
		&\nabla^{\psi} \cdot\left((\partial_{t} +\breve{v}\cdot \nabla)v \right)+ \nabla^{\psi} \cdot (c^{2} \nabla^{\psi} h) = \nabla^{\psi} \cdot (\sum_{j=1}^{2} (\breve{G}_{j}\cdot \nabla ) G_{j}).
	\end{aligned}
	\right.
\end{equation}

Next, we take the difference of the two equations in \eqref{2.15} to deduce a wave-type equation:
\begin{equation}\label{2.16}
	(\partial_{t} +\breve{v}\cdot \nabla)^{2} h- c^{2}\Delta^{\psi} h - \sum_{j=1}^{2} (\breve{G}_{j}\cdot \nabla ) (\nabla^{\psi} \cdot G_{j})  =\mathcal{F},
\end{equation}
where the term $\mathcal{F}=-[\partial_{t}+\breve{v}\cdot \nabla, \nabla^{\psi} ]v- \nabla^{\psi} c^{2} \cdot  \nabla^{\psi} h+\sum_{j=1}^{2} \nabla^{\psi} \breve{G}_{j}\cdot \nabla  G_{j} $ is a lower order term in the second order differential equation for $h$, here $[f,g]a=f(ga)-g(fa)$ is a commutator.

As for the term $  \sum_{j=1}^{2} (\breve{G}_{j}\cdot \nabla ) (\nabla^{\psi} \cdot G_{j}) $,  by using the divergence constraint in \eqref{2.8}, we rewrite it as follows:
\begin{equation}\label{2.17}
	\nabla^{\psi} \cdot (\varrho G_{j}) =\nabla^{\psi} \varrho \cdot G_{j} + \varrho \nabla^{\psi} \cdot G_{j}=0,
\end{equation}
thus, we have
\begin{equation}\label{2.18}
	\begin{aligned}
		&\sum_{j=1}^{2} (\breve{G}_{j}\cdot \nabla ) (\nabla^{\psi} \cdot G_{j}) = 
		-\sum_{j=1}^{2} (\breve{G}_{j}\cdot \nabla ) (\frac{\nabla^{\psi} \varrho}{\varrho}\cdot  G_{j}) = -\sum_{j=1}^{2} (\breve{G}_{j}\cdot \nabla ) (\nabla^{\psi} h\cdot  G_{j}) \\
		&= -\sum_{j=1}^{2}  G_{sj} \partial^{\psi}_{s} (\partial^{\psi}_{l} h G_{lj})= -\sum_{j=1}^{2}  G_{sj} G_{lj}  \partial^{\psi}_{s} \partial^{\psi}_{j} h- G_{sj} \partial^{\psi}_{s}  G_{lj} \partial^{\psi}_{l} h.
	\end{aligned}
\end{equation}

Finally, substituting \eqref{2.18}  into \eqref{2.16} and we deduce a wave-type equation for $h$:
\begin{equation}\label{2.19}
	(\partial_{t} +\breve{v}\cdot \nabla)^{2} h- c^{2}\Delta^{\psi} h -\sum_{j=1}^{2}  G_{sj} G_{lj}  \partial^{\psi}_{s} \partial^{\psi}_{j} h=G_{sj} \partial^{\psi}_{s}  G_{lj} \partial^{\psi}_{l} h+ \mathcal{F},
\end{equation}
note that all terms in the right hand side of \eqref{2.19} are lower order terms.

	From the boundary conditions in \eqref{2.8}, we already know that 
\begin{equation}\label{2.20}
	[h]=0~~\text{on}~\Gamma.
\end{equation}

To determine the value of $h$, we add another condition involving the normal derivatives of $h$. More precisely, Taking  the difference of  two equations \eqref{2.12} and  \eqref{2.13}, we can  obtain the jump of the normal derivatives $\nabla h^{\pm}\cdot n$ on $\Gamma$,
\begin{equation}\label{2.21}
	[c^{2} \nabla^{\psi} h\cdot n]=[-2v_{1}\partial_{1}\partial_{t} f- (v_{1})^{2} \partial^{2}_{11} f+\sum_{j=1}^{2} (G_{1j})^{2} \partial^{2}_{11}  f]~~on~\Gamma.
\end{equation}

Combining \eqref{2.19}, \eqref{2.20} with \eqref{2.21} to obtain the nonlinear system for  $h$,
\begin{equation}\label{2.22}
	\left\{
	\begin{aligned}
		&	(\partial_{t} +\breve{v}\cdot \nabla)^{2} h- c^{2}\Delta^{\psi} h -\sum_{j=1}^{2}  G_{sj} G_{lj}  \partial^{\psi}_{s} \partial^{\psi}_{j} h=G_{sj} \partial^{\psi}_{s}  G_{lj} \partial^{\psi}_{l} h+ \mathcal{F}&~~on~\Omega, \\
		&[h]=0&~~on~\Gamma,\\
		&		[c^{2} \nabla^{\psi} h\cdot n]=[-2v_{1}\partial_{1}\partial_{t} f- (v_{1})^{2} \partial^{2}_{11} f+\sum_{j=1}^{2} (G_{1j})^{2} \partial^{2}_{11}  f]&~~on~\Gamma.
	\end{aligned}
	\right.
\end{equation}

\subsection{Definitions and Terminology}
\qquad	Before stating the main result, we define some notation that will be used throughout the paper. Throughout
the paper, $C>0$ will denote a generic constant that can depend on the parameters
of the problem, but does not depend on the data, etc. We refer to such 
constants as universal constants. They are allowed to change from one inequality to the
next one. We will employ the notation $a\lesssim b$ to mean that $a\leq C b$  for a universal
constant $C>0$. Also the notation $a\gtrsim b$ denotes $a\geq C b$. $\mathfrak{R} $ means the real part of the complex function or number. We use $x$ to mean the 2 dimensional vector $(x_1, x_2)$. It is conventional that $e_2=(0,1)$ means the unit vector in $\mathbb{R}^2$.

Since we study two disjoint fluids, for a function $\psi$ defined in $\Omega$, we write $\psi^{+}$ for the restriction to $\Omega^{+}$ and $\psi^{-}$ for the restriction to $\Omega^{-}$. For all $j\in \mathbb{R}$, We  define the piecewise Sobolev space by 
\begin{equation}\label{2.23}
	H^{j}(\Omega): =\{  \psi| \psi^{+} \in H^{j}(\Omega^{+}), \psi^{-} \in H^{j}(\Omega^{-}) \},
\end{equation}
endowed with the norm $\|\psi\|_{H^{j}}^{2}= \|\psi^{+}\|_{H^{j}(\Omega^{+})}^{2}+\|\psi^{-}\|_{H^{j}(\Omega^{-})}^{2}$.
The usual Sobolev norm $ \|\psi\|_{H^{j}(\Omega^{\pm})}^{2}$ is equipped with the following norm:
\begin{equation}\label{2.24}
	\begin{aligned}
		\|\psi\|_{H^{j}(\Omega^{\pm})}^{2}: &=\sum_{s=0}^{j} \int_{\mathbb{R}\times I_{\pm}} (1+\eta^{2})^{j-s} |\partial_{2}^{s}\hat{\psi}_{\pm}(\eta,x_{2})|^{2} d \eta d x_{2}\\
		&= \sum_{s=0}^{j} \int_{\mathbb{R}} (1+\eta^{2})^{j-s} \|\partial_{2}^{s}\hat{\psi}_{\pm}(\eta,x_{2})\|^{2}_{L^{2}(I_{\pm})}  d \eta,
	\end{aligned}
\end{equation}
where $I_{-}=(-\infty,0)$ and $I_{+}=(0,\infty)$ and $\hat{\psi}$ is the Fourier transform of $ \psi $ via
\begin{equation}\label{2.25}
	\hat{\psi}(\eta)=  \int_{\mathbb{R}} \psi e^{-i x_{1} \eta}  dx_{1}, 
\end{equation}
for a function $\psi$ defined on $\Gamma$, we define usual Sobolev space by 
\begin{equation}\label{2.26}
	\|\psi\|_{H^{j}(\Gamma)}^{2}: =  \int_{\mathbb{R}} (1+\eta^{2})^{j} |\hat{\psi}(\eta)|^{2} d \eta.
\end{equation}
To shorten notation, for $k \geq 0$, we define
\begin{equation}\label{2.27}
	\|(f, h, v,G_{j})(t)\|_{H^{k}}=\|f(t)\|_{H^{k}(\Gamma)}+\|h(t)\|_{H^{k}(\Omega)}+\|v(t)\|_{H^{k}(\Omega)}+\|G_{j}(t)\|_{H^{k}(\Omega)}. 
\end{equation}

\subsection{Main result}
\quad  This paper is devoted to proving the ill-posedness of  Kelvin-Helmholtz problem of the ideal compressible elastic system under  the following  condition:
\begin{equation}\label{2.28}
	U_{low}+c\epsilon_{0}\le |\dot{v}^{+}_{1}|\le U_{upp}-c\epsilon_{0},
\end{equation}
where we define $U_{low}:=\sqrt{((\dot{G}^{+}_{11})^{2}+(\dot{G}^{+}_{12})^{2})}$ and $U_{upp}:=\sqrt{2c^{2}+ ((\dot{G}^{+}_{11})^{2}+(\dot{G}^{+}_{12})^{2})}$, here $\epsilon_{0}$ is a small but fixed constant, $\dot{G}^{+}_{11} $ and $\dot{G}^{+}_{12}$ are defined by \eqref{3.3}, $c=c(\dot{\rho})$ is the sound speed defined as \eqref{1.2}.

\begin{defi}\label{definition}
	We say that the  problem \eqref{2.8} is locally well-posedness for some $k \geq 3$ if there exist $\delta, t_{0}, C>0$  such that for any initial data $(f_{0}^{1}, h_{0}^{1}, v_{0}^{1},G_{0,j}^{1})$, $(f_{0}^{2}, h_{0}^{2}, v_{0}^{2},G_{0,j}^{2})$ satisfying
	\begin{equation}\label{2.29}
		\|(f_{0}^{1}-f_{0}^{2}, h_{0}^{1}-h_{0}^{2}, v_{0}^{1}-v_{0}^{2},G_{0,j}^{1}-G_{0,j}^{2})\|_{H^{k}}<\delta,
	\end{equation}
	there exist unique solutions $(f^{1}, h^{1}, v^{1},G_{j}^{1})$ and  $(f^{2}, h^{2}, v^{2},G_{j}^{2})\in L^{\infty}([0,t_{0}]; H^{3})$ for the system \eqref{2.8} with initial data $(f^{k}, h^{k}, v^{k},G^{k}_{j})|_{t=0}=(f^{k}_{0}, h^{k}_{0}, v^{k}_{0},G^{k}_{0,j})$, $k=1,2$ and there holds
	\begin{equation}\label{2.30}
		\begin{aligned}
			&\sup_{0\leq t \leq t_{0}}	\|(f^{1}-f^{2}, h^{1}-h^{2}, v^{1}-v^{2},G^{1}_{j}-G^{2}_{j})(t)\|_{H^{3}}\\
			&\leq C(\| (f_{0}^{1}-f_{0}^{2}, h_{0}^{1}-h_{0}^{2}, v_{0}^{1}-v_{0}^{2},G^{1}_{0,j}-G^{2}_{0,j}) \|_{H^{k}}).
		\end{aligned}
	\end{equation}
\end{defi}

\begin{theo}\label{theorem:main}
	Let the initial domain to be $\mathbb{R}^{2}= \Omega^{+}(0) \cup \Omega^{-}(0) \cup \Gamma(0)$. Suppose that the initial data satisfies the constraint  condition \eqref{2.3} and \eqref{2.11}, further we assume  the  rectilinear solution  satisfies the instability condition \eqref{2.28}.  Then the Kelvin-Helmholtz problem of \eqref{2.8}  is not locally well-posed in the sense of Definition \ref{definition}.
\end{theo}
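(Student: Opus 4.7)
The plan is to refute the well-posedness of \eqref{2.8} by exhibiting, for every putative modulus of continuity in Definition~\ref{definition}, pairs of initial data whose nonlinear solutions separate in $H^3$ faster than any Lipschitz bound permits. The mechanism is a family of exponentially growing high-frequency normal modes of the linearization of \eqref{2.8} at the rectilinear state, produced by the eigenvalue analysis announced in Section~3. I follow the overall scheme of \cite{Guo1,Xie1}: first a sharp linear instability, then a perturbative construction showing that the nonlinear evolution inherits it on a short enough time window.

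For the linear step, I would freeze the coefficients in the reduced wave-type system \eqref{2.22} at the rectilinear state and seek solutions $h^{\pm}(t,x_1,x_2)=H^{\pm}(x_2)e^{\gamma t+i\xi x_1}$ with $H^{\pm}\to 0$ at infinity. Substitution yields $H^{\pm}(x_2)=A^{\pm}e^{\mp\mu^{\pm}x_2}$ with
\begin{equation*}
c^{2}(\mu^{\pm})^{2}= c^{2}\xi^{2}+(\gamma+i\xi\dot v_{1}^{\pm})^{2}+\xi^{2}\bigl((\dot G_{11}^{\pm})^{2}+(\dot G_{12}^{\pm})^{2}\bigr),
\end{equation*}
and the boundary conditions $[h]=0$ together with \eqref{2.21} (rewritten through the $f$-equation \eqref{2.14}) produce a $3\times 3$ homogeneous linear system for $(A^{+},A^{-},\hat f)$. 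Using the symmetries $\dot v_{1}^{+}=-\dot v_{1}^{-}$ and $\dot G_{1j}^{+}=-\dot G_{1j}^{-}$ and the homogeneity of the determinant in $(\gamma,\xi)$, the dispersion relation reduces to an algebraic equation for $\lambda:=\gamma/\xi$ that is independent of $\xi$. The analysis of Section~3 furnishes, under \eqref{2.28}, a root $\lambda_{*}$ with $\operatorname{Re}\lambda_{*}\geq\kappa>0$, where $\kappa$ is bounded below \emph{uniformly} over the compact parameter range \eqref{2.28}.

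For the nonlinear step, suppose for contradiction that $\delta,t_0,C,k$ satisfy Definition~\ref{definition}. Fix a Schwartz cutoff $\chi$ on $\Gamma$, take a sequence $\xi_n\to\infty$, and choose the two initial data as the rectilinear one and a localized high-frequency perturbation with
\begin{equation*}
f_{0,n}(x_1)=\varepsilon_n\chi(x_1)\cos(\xi_n x_1),\qquad \varepsilon_n:=\xi_n^{-k-1},
\end{equation*}
completed in $(h_{0,n},v_{0,n},G_{0,j,n})$ so as to satisfy the compatibility conditions \eqref{2.11} and to project with nonzero amplitude onto the unstable eigenmode associated with $\lambda_{*}$. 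The $H^k$ distance of the data is then $O(\xi_n^{-1})<\delta$ for $n$ large. On the other hand, if the purported $H^3$-solution exists, decomposing its Fourier component at $\xi_n$ as the linear growing mode plus a nonlinear remainder and controlling the remainder by an energy estimate on the difference system, at $t=t_0$ the $H^3$-norm of the difference is bounded below by $c\,\xi_n^{2-k}e^{\kappa\xi_n t_0}$. For $n$ sufficiently large this exceeds any constant multiple of $\xi_n^{-1}$ and contradicts \eqref{2.30}.

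The main obstacle is the remainder step: one must propagate the linear growing mode through the fully nonlinear system \eqref{2.8} on the window $[0,t_0]$ without the nonlinearity swamping the exponential amplification, while absorbing the regularity losses inherent to the (now flattened) free boundary and to the elastic transport operators $\breve G_j\cdot\nabla$. The remedy is a weighted energy functional adapted to the symmetric second-order operator in \eqref{2.22}, in which the acoustic contribution $c^{2}|\nabla^{\psi}h|^{2}$ and the elastic contribution $\sum_{j}|\breve G_j\cdot\nabla h|^{2}$ enter with their natural signs, combined with the divergence constraint $\nabla^{\psi}\cdot(\varrho G_j)=0$ to close the bounds with no derivative loss beyond the one already folded into the polynomial factor $\xi_n^{3}$. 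The uniformity of the contradiction across the compact band \eqref{2.28} is then inherited from the uniform lower bound $\operatorname{Re}\lambda_{*}\geq\kappa$ provided by Section~3.
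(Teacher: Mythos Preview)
Your linear step---the normal-mode ansatz, the reduction to an algebraic dispersion relation in $\lambda=\gamma/\xi$, and the existence of a root with $\operatorname{Re}\lambda_*\ge\kappa>0$ uniformly on the band \eqref{2.28}---matches the paper's Section~3. The nonlinear step, however, departs from the paper and contains a genuine gap.

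You claim that the nonlinear perturbation tracks the linear growing mode up to the \emph{fixed} time $t_0$, producing a lower bound $c\,\xi_n^{2-k}e^{\kappa\xi_n t_0}$ for the $H^3$-difference. This is internally inconsistent with the contradiction hypothesis: if \eqref{2.30} holds, the nonlinear difference is uniformly $O(\xi_n^{-1})$ on $[0,t_0]$, so it \emph{cannot} stay close to a linear mode that has grown to size exponential in $\xi_n$ by time $t_0$. The ``remainder'' you propose to control is therefore of the same order as the linear part itself near $t_0$, and no weighted energy functional can render it subordinate there. A bootstrap of the kind you sketch could at best be run up to a vanishing time $T_n\sim(\log\xi_n)/(\kappa\xi_n)$; one would then need to show that the well-posedness bound is already violated on that shrinking window, and to absorb the derivative losses from the boundary coupling and the elastic transport operators $\breve G_j\cdot\nabla$ at that scale. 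None of this is executed, and it is precisely the step you flag as ``the main obstacle''.

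The paper bypasses this difficulty entirely by a rescaling--compactness argument. One fixes a single linear solution $(f^L,h^L,v^L,G^L_j)$ with $H^k$-data of size $\lesssim 1/n$ but $H^3$-norm exceeding $2$ for $t\ge t_0/2$ (Lemma~\ref{lem:4.3}), takes nonlinear data $\varepsilon\cdot(f^L_0,h^L_0,v^L_0,G^L_{0,j})$, and uses the assumed bound \eqref{2.30} itself to get $\sup_{[0,t_0]}\|(\tilde f^\varepsilon,\tilde h^\varepsilon,\tilde v^\varepsilon,\tilde G^\varepsilon_j)\|_{H^3}<\varepsilon$. After dividing by $\varepsilon$, the rescaled family is bounded in $L^\infty([0,t_0];H^3)$ with time derivatives bounded in $H^2$; Lions--Aubin compactness extracts a limit. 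Because every nonlinear term in \eqref{5.8}--\eqref{5.9} carries an explicit factor of $\varepsilon$ after rescaling, the limit solves the linearized system \eqref{3.5}--\eqref{3.6} with data $(f^L_0,h^L_0,v^L_0,G^L_{0,j})$; the band-limited uniqueness lemma then forces it to equal $(f^L,h^L,v^L,G^L_j)$. Lower semicontinuity gives $\sup_{[0,t_0]}\|(f^L,\ldots)\|_{H^3}\le 1$, contradicting the choice $\ge 2$. No remainder estimate, no bootstrap, and no tracking of the nonlinear flow against an unbounded profile are required.
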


\begin{rema}
	We construct the growing normal mode solution for the front $f$ when the background velocity satisfies $U_{low}<|\dot{v}^{+}_{1}|<U_{upp}$.  While for the linear and nonlinear problem, we can prove the ill-posedness of the solutions $(h, v, G_{j})$ of the Kelvin-Helmholtz problem to the ideal compressible elastic flow uniformly when $U_{low}+c\epsilon_{0}\le \dot{v}^{+}_{1}\le U_{upp}-c\epsilon_{0}$, where $\epsilon_0$ is some fixed small enough positive constant and $c$ is the sound speed. 
\end{rema}
\begin{rema}
	Since $\Psi: (t,x)\mapsto (t,x_{1}, x_{2}+ \psi(t,x)) $ is a diffeomorphism transform, the ill-poseness of system \eqref{2.8} in the flatten coordinate implies the ill-poseness of the solution to the original system \eqref{1.1}-\eqref{1.6}.
\end{rema}

\begin{rema}
	In this paper, we take $(f^{2}, h^{2}, v^{2},G_{j}^{2})$ exactly by the rectilinear solution $(\dot{f},\dot{\varrho}^{\pm},\dot{v}^{\pm},\dot{G}_{j}^{\pm})$.  Thus we prove the ill-posedness of the solution near the rectilinear solution for the free boundary value problem of the two-dimensional ideal 
	compressible elastic flows.
\end{rema}

\begin{rema}
	By taking $\dot{G}=0$, the  instability condition \eqref{2.28} becomes 
	\begin{equation}\label{2.31}
		\epsilon_{0} \leq M:=\frac{|\dot{v}^{+}_{1}|} {c}\le \sqrt{2}-\epsilon_{0},
	\end{equation}
 which reduces to the instability condition  of Kelvin-Helmholtz problem  for  the compressible  Euler fluids(refer to \cite{Xie1} ) .
\end{rema}

	\section{The Linearized Equations in new coordinate}
\quad \quad In this section, we consider  a  linearized system in new coordinate and would construct a  growing normal mode solution for this linearized system. By taking Fourier trnasform of linearized system and solving the linear equation for $\hat{m}$, we finally derive a second order ordinary equation for $\hat{g}$ and obtain the symbol for $\hat{g}$. With the analysis of the symbol of $\hat{g}$, we show the instability condition of the linear system of Kelvin-Helmholtz problem  for  the ideal compressible elastic fluids.

\subsection{Construction of a growing solution of the  linearized system.}
\quad \quad  It is easily verified that the particular solution in Euler coordinate is also a  particular solution in new coordinate such that
\begin{equation} \label{3.1}
	\dot{v}^{\pm}=\dot{u}^{\pm}=\left\{
	\begin{aligned}
		&(\dot{v}^{+}_{1},0)&x_2\ge 0,\\
		&(\dot{v}^{-}_{1},0)&x_2<0,
	\end{aligned}
	\right.
\end{equation}
and 
\begin{equation}\label{3.2}
	\dot{\varrho}^{+}=\dot{\varrho}^{-}:=\dot{\varrho}.
\end{equation}
and
\begin{equation} \label{3.3}
\dot{G}	=\dot{F}=\left\{
	\begin{aligned}
		&\left[ {\begin{array}{cc}\dot{G}^{+}_{11} & \dot{G}^{+}_{12} \\0 & 0 \\\end{array} } \right]&x_2\geq0,\\
		&\left[ {\begin{array}{cc}\dot{G}^{-}_{11} & \dot{G}^{-}_{12} \\0 & 0 \\\end{array} } \right]&x_2<0.
	\end{aligned}
	\right.
\end{equation}

Now we will consider a constant coefficient linearized equations which is derived by linearization of equations \eqref{2.8}, \eqref{2.14} and \eqref{2.22} around the rectilinear solutions: constant velocity $\dot{v}^{\pm}=(\dot{v}^{\pm}_{1},0)$, constant deformation matrix $\dot{G}^{\pm}=(\dot{G}^{\pm}_{11}, \dot{G}^{\pm}_{12},0,0)$, constant pressure $\dot{h}^{+}= \dot{h}^{-}$,  flat front $\Gamma=\{x_{2}=0 \}$ and around the outer normal vector $e_2=(0,1)$. Moreover, all the rectilinear solutions can be transformed to the following form  under the Galilean transformation and by the change of the scale of measurement
\begin{equation}\label{3.4}
	\dot{v}^{+}_{1}+ \dot{v}^{-}_{1}=0, ~\dot{G}^{+}_{11}+ \dot{G}^{-}_{11}=0,~\dot{G}^{+}_{12}+ \dot{G}^{-}_{12}=0.
\end{equation}
\begin{rema}
From now on and throughout this paper, We use the new notation $(\dot{f},\dot{\varrho}^{\pm},\dot{v}^{\pm},\dot{G}^{\pm})$ to denoe the rectilinear solution  $(\dot{f},\dot{\rho}^{\pm},\dot{u}^{\pm},\dot{F}^{\pm})$, which is in fact the same constant quantity. Here we use the new notation to match the notation in the new coordinates.
\end{rema}

Therefore, we have the following linearized equation around the rectilinear solution  $(\dot{f},\dot{\varrho}^{\pm},\dot{v}^{\pm},\dot{G}^{\pm})$
	\begin{equation}\label{3.5}
	\begin{cases}
		\partial_{t} h+\dot{v}_{1} \partial_{1} h+ {\rm div} v=0 & \text { in } \Omega, \\
		\partial_{t} v+\dot{v}_{1} \partial_{1} v+ c^{2}(\dot{h}) \nabla h=\sum_{j=1}^{2} \dot{G}^{+}_{1j}\partial_{1} G^{+}_{j} & \text { in } \Omega, \\
		\partial_{t} G_{j}+\dot{v}_{1} \partial_{1}  G_{j} =\dot{G}_{1j}\partial_{1} v & \text { in } \Omega, \\
		\partial_{t} f=v_{2}-\dot{v}_{1} \partial_{1} f   & \text { on } \Gamma.
	\end{cases}
\end{equation}
To linearize the boundary conditions in  \eqref{2.8}, we let $v=\dot{v}+ \tilde{v}$ and $n=e_{2}+ \tilde{n}$, therefore we have:
\begin{equation*}
	[(\dot{v}+ \tilde{v})\cdot (e_{2}+ \tilde{n}) ]=[\tilde{v} \cdot e_{2}]+ [\dot{v}\cdot \tilde{n} ]+ [\tilde{v} \cdot \tilde{n}]=0,
\end{equation*}
where $\tilde{n}=(-\partial_{1}\tilde{f},0 )$. Obviously, the third term is nonlinear term, it follows that 
\begin{equation*}
	[\tilde{v} \cdot e_{2}] =- [\dot{v}\cdot \tilde{n} ]= 2\dot{v}^{+}_{1}\partial_{1} \tilde{f}.
\end{equation*}
Similarly, we can  deduce that 
\begin{equation*}
	 \tilde{G}_{2j} - \dot{G}_{1j}\partial_{1} \tilde{f}=0.
\end{equation*}
Thus, the boundary conditions can be  linearized as follows:
\begin{equation}\label{3.6}
	[h]=0,~[v\cdot e_{2}]= 2\dot{v}^{+}_{1}\partial_{1} f,~ G_{2j} - \dot{G}_{1j}\partial_{1} f=0 \text { on } \Gamma.
\end{equation}

We also get a linearized equation for the front $f$
\begin{equation}\label{3.7}
	\begin{aligned}
		&\partial_{t}^{2}f+ (\dot{v}^{+}_{1})^{2} \partial^{2}_{11} f+ \frac{c^{2}}{2} \partial_{2} (h^{+}+h^{-})- ((\dot{G}^{+}_{11})^{2}+(\dot{G}^{+}_{12})^{2})\partial^{2}_{11} f=0 ~~~on~\Gamma,
	\end{aligned}
\end{equation}
	and a linearized system for  the pressure $h$
\begin{equation}\label{3.8}
	\left\{
	\begin{aligned}
		&(\partial_{t} +\dot{v}_{1}\partial_{1})^{2} h-  c^{2} \Delta h - ((\dot{G}^{+}_{11})^{2}+(\dot{G}^{+}_{12})^{2})\partial^{2}_{11} h =0, &~~~on~ \Omega,\\
		&[h]=0, &~~~on~ \Gamma,\\
		&[c^{2}\partial_{2} h]= -4\dot{v}^{+}_{1} \partial_{t}\partial_{1}f& ~~~on~ \Gamma.
	\end{aligned}
	\right.
\end{equation}

Since we want to construct a solution to the linear system \eqref{3.5}-\eqref{3.8} that has a growing $H^{k}$ norm for any $k$. To begin with, we assume the solution is in the following normal mode form:
\begin{equation}\label{3.9}
		\begin{aligned}
	&h(t,x_{1},x_{2})= e^{\tau t} m(x_{1},x_{2}),~ v(t,x_{1},x_{2})= e^{\tau t} w(x_{1},x_{2}),\\ &G_{j}(t,x_{1},x_{2})= e^{\tau t} E_{j}(x_{1},x_{2}),~ f(t,x_{1})= e^{\tau t} g(x_{1}),
		\end{aligned}
\end{equation}
here we assume that $\tau=\gamma+ i \delta \in \mathbb{C}\backslash \{0\}  $ is the same above and below the interface. A solution with $\mathfrak{R} (\tau)>0$ corresponds to a growing mode. Plugging the ansatz \eqref{3.8} into  \eqref{3.5}-\eqref{3.8}, we have 
\begin{equation}\label{3.10}
	\begin{cases}
		\tau  m+\dot{v}_{1} \partial_{1} m+ {\rm div} w=0 & \text { in } \Omega, \\
		\tau w+\dot{v}_{1} \partial_{1} w+ c^{2} \nabla m=\sum_{j=1}^{2} \dot{G}^{+}_{1j}\partial_{1} E^{+}_{j} & \text { in } \Omega, \\
		\tau E_{j}+\dot{v}_{1} \partial_{1}  E_{j} =\dot{G}_{1j}\partial_{1} w & \text { in } \Omega, \\
		\tau g=w_{2}-\dot{v}_{1} \partial_{1} g & \text { on } \Gamma, \\
		  [m]=0,~[w\cdot e_{2}]= 2\dot{v}^{+}_{1}\partial_{1} g,~ E_{2j} - \dot{G}_{1j}\partial_{1} g=0 & \text { on } \Gamma.
	\end{cases}
\end{equation}
and
\begin{equation}\label{3.11}
	\begin{aligned}
		&\tau^{2}g + (\dot{v}^{+}_{1})^{2} \partial^{2}_{11} g+ \frac{c^{2}}{2} \partial_{2} (m^{+}+m^{-})- ((\dot{G}^{+}_{11})^{2}+(\dot{G}^{+}_{12})^{2})\partial^{2}_{11} g=0 ~~~on~\Gamma,
	\end{aligned}
\end{equation}
and 
\begin{equation}\label{3.12}
	\left\{
	\begin{aligned}
		&(\tau +\dot{v}_{1}\partial_{1})^{2} m-  c^{2} \Delta m - ((\dot{G}^{+}_{11})^{2}+(\dot{G}^{+}_{12})^{2})\partial^{2}_{11} m =0 &~~~on~ \Omega,\\
		&[m]=0 &~~~on~ \Gamma,\\
		&[c^{2}\partial_{2} m]= -4\dot{v}^{+}_{1} \tau\partial_{1}g& ~~~on~ \Gamma.
	\end{aligned}
	\right.
\end{equation}

\subsection{The  formula  for $\partial_{2}\hat{m}^{+}+\partial_{2}\hat{m}^{-}$ on $\Gamma$}
\quad By taking the Fouier transform of problem \eqref{3.11} and \eqref{3.12}, we deduce a formula for $\partial_{2}\hat{m}^{+}+\partial_{2}\hat{m}^{-}$ on $\Gamma$, then substituting  this formula into \eqref{3.11}, it follows that an second-order wave-type equation  for the front  $g$.  More precisely, we define the Fourier  transform of $m$ and $g$ with respect to the $x_1$ variable as follow:
\begin{equation*}
\hat{m}(\eta,x_{2})= \int_{\mathbb{R}} m(x_{1},x_{2}) e^{-ix_{1} \eta}  dx_{1} ,~ \hat{g}(\eta)= \int_{\mathbb{R}} g(x_{1}) e^{-i x_{1} \eta} dx_{1}.
\end{equation*}

Taking the transform of the problem \eqref{3.11} and \eqref{3.12}, we derive the following equations
\begin{equation}\label{3.13}
	\begin{aligned}
		\tau^{2}\hat{g}-(\dot{v}^{+}_{1})^{2} \eta^{2} \hat{g}+ \frac{c^{2}}{2} \partial_{2} (\hat{m}^{+}+\hat{m}^{-})+ ((\dot{G}^{+}_{11})^{2}+(\dot{G}^{+}_{12})^{2}) \eta^{2} \hat{g}=0~~\text{on}~\Gamma,
	\end{aligned}
\end{equation}
and
\begin{equation}\label{3.14}
	\left\{
	\begin{aligned}
		&(\tau +i\dot{v}_{1} \eta)^{2} \hat{m}+c^{2}\eta^{2} \hat{m}-  c^{2} \partial^{2}_{22} \hat{m}+ ((\dot{G}^{+}_{11})^{2}+(\dot{G}^{+}_{12})^{2})\eta^{2} \hat{m}=0~~~&\text {in}~ \Omega,\\
		&[\hat{m}]=0~~~&\text{on}~\Gamma,\\
		&[c^{2}\partial_{2} \hat{m}]= -4 i\dot{v}^{+}_{1} \tau\eta  \hat{g}~~~&\text{on}~\Gamma.
	\end{aligned}
	\right.
\end{equation}

Solving the system  \eqref{3.14}, we obtain 
\begin{equation} \label{3.15}
	\hat{m}=\left\{
	\begin{aligned}
		& \frac{4 i\dot{v}^{+}_{1}\tau \eta  \hat{g}}{c^{2}(\mu^{+}+\mu^{-})}e^{-\mu^{+}x_2}&x_2\ge 0,\\
		& \frac{4 i \dot{v}^{+}_{1} \tau \eta \hat{g}}{c^{2}(\mu^{+}+\mu^{-})}e^{\mu^{-}x_2}&x_2<0,
	\end{aligned}
	\right.
\end{equation}
where $\mu^{\pm}=\sqrt{\frac{(\tau\pm i\dot{v}^{+}_{1} \eta)^{2} + ((\dot{G}^{+}_{11})^{2}+(\dot{G}^{+}_{12})^{2}) \eta^{2}}{c^{2} }+ \eta^{2}}$ are the root of the equation 
\begin{equation}\label{3.16}
	-c^{2} s^{2} +(\tau\pm i \dot{v}^{+}_{1}\eta)^{2}+ ((\dot{G}^{+}_{11})^{2}+(\dot{G}^{+}_{12})^{2}) \eta^{2}+c^{2}\eta^{2}=0,
\end{equation}
here we notice that $\mathcal{R} \mu^{\pm} >0$ since $\mathcal{R} \tau >0$.

By direct computation, we deduce
\begin{equation}\label{3.17}
	c^{2}(\partial_{2}\hat{m}^{+}+\partial_{2}\hat{m}^{-})=-4i\dot{v}^{+}_{1} \eta \tau\hat{g} \frac{\mu^{+}-\mu^{-}}{\mu^{+}+\mu^{-}}~~ ~~\text{on}~\Gamma,
\end{equation}
substituting \eqref{3.17} into \eqref{3.13} to obtain an second-order  equation for $\hat{g}$
\begin{equation}\label{3.18}
	\begin{aligned}
		(\tau^{2}-(\dot{v}^{+}_{1})^{2} \eta^{2}-2i \dot{v}^{+}_{1} \eta \tau \frac{\mu^{+}-\mu^{-}}{\mu^{+}+\mu^{-}} + ((\dot{G}^{+}_{11})^{2}+(\dot{G}^{+}_{12})^{2})  \eta^{2}) \hat{g}=0~~~~\text{on}~\Gamma.
	\end{aligned}
\end{equation}

\subsection{Study of the roots of the symbol $\Sigma$ }
\quad In order to further analysis  the equation \eqref{3.18}, we  define  the symbol of \eqref{3.18} by $\Sigma$:
\begin{equation}\label{3.19}
	\Sigma(\tau,\eta)= \tau^{2}-(\dot{v}^{+}_{1})^{2} \eta^{2} -2i \dot{v}^{+}_{1} \eta \tau \frac{\mu^{+}-\mu^{-}}{\mu^{+}+\mu^{-}}+((\dot{G}^{+}_{11})^{2}+(\dot{G}^{+}_{12})^{2}) \eta^{2},
\end{equation}
and define a set of ``frequencies"
\begin{equation}\label{3.20}
	\Xi= \{ (\tau,\eta)\in \mathbb{C} \times \mathbb{R}  : \mathfrak{R} \tau> 0, (\tau,\eta)\neq (0,0)\}.
\end{equation}

Since we already know that $\mathfrak{R} \mu^{\pm}>0$ in all points with $\mathfrak{R} \tau>0$. It follows that $\mathfrak{R} (\mu^{+}+ \mu^{-})>0$ and thus $\mu^{+}+ \mu^{-}>0$ in all such points. In this paper, we are interested only in the unstable case, the symbol $\Sigma$ is defined in points $(\tau,\eta)\in \Xi$.

	We also need to know whether the difference $\mu^{+}-\mu^{-}$ vanishes.
\begin{lemm}
	Let $(\tau,\eta)\in \Xi$. Then $\mu^{+}=\mu^{-}$ if and only if $(\tau,\eta)=(\tau,0)$.
\end{lemm}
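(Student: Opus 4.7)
The plan is to reduce the equality $\mu^+=\mu^-$ to an elementary algebraic identity involving their squares, and then use the sign of the real part of $\mu^++\mu^-$ to upgrade ``equality of squares'' to ``equality of the $\mu^\pm$ themselves.''

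First I would compute $(\mu^+)^2-(\mu^-)^2$ directly from the definition. Since
\begin{equation*}
(\mu^\pm)^2 = \frac{(\tau\pm i\dot{v}^{+}_{1}\eta)^{2} + ((\dot{G}^{+}_{11})^{2}+(\dot{G}^{+}_{12})^{2})\eta^{2}}{c^{2}}+\eta^{2},
\end{equation*}
the elastic term, the $\eta^{2}$ term, and the $\tau^{2}$ and $-(\dot{v}^{+}_{1})^{2}\eta^{2}$ pieces inside $(\tau\pm i\dot{v}^{+}_{1}\eta)^{2}$ cancel in the subtraction, leaving
\begin{equation*}
(\mu^+)^2-(\mu^-)^2 \;=\; \frac{4\,i\,\dot{v}^{+}_{1}\,\eta\,\tau}{c^{2}}.
\end{equation*}

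Next I would use the factorization $(\mu^+-\mu^-)(\mu^++\mu^-)=(\mu^+)^2-(\mu^-)^2$. Because $\mathfrak{R}\mu^\pm>0$ on $\Xi$ (as already recorded in the text just before the lemma), we have $\mathfrak{R}(\mu^++\mu^-)>0$, so in particular $\mu^++\mu^-\neq 0$. Therefore $\mu^+=\mu^-$ is equivalent to $(\mu^+)^2=(\mu^-)^2$, which by the identity above is equivalent to $\dot{v}^{+}_{1}\,\eta\,\tau=0$. On $\Xi$ we have $\tau\neq 0$ (since $\mathfrak{R}\tau>0$), and we are in the Kelvin--Helmholtz regime with $\dot{v}^{+}_{1}\neq 0$, so the only possibility is $\eta=0$, i.e.\ $(\tau,\eta)=(\tau,0)$.

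For the converse, substituting $\eta=0$ into the definition of $\mu^\pm$ yields $(\mu^\pm)^2=\tau^{2}/c^{2}$, and since the square root is fixed by the condition $\mathfrak{R}\mu^\pm>0$ we obtain $\mu^+=\mu^-=\tau/c$ (with the principal branch determined by $\mathfrak{R}\tau>0$). The only subtlety in the argument is the branch choice for the square roots defining $\mu^\pm$, but this is resolved a priori by the normalization $\mathfrak{R}\mu^\pm>0$ used throughout the excerpt, so no further case analysis is required.
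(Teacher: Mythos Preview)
Your proof is correct and follows essentially the same approach as the paper: both reduce to the observation that $(\mu^{+})^{2}-(\mu^{-})^{2}=4i\dot v_{1}^{+}\eta\tau/c^{2}$, note that on $\Xi$ this forces $\eta=0$, and then verify $\mu^{+}=\mu^{-}=\tau/c$ at $\eta=0$. If anything, your write-up is slightly more careful than the paper's, since you make explicit the step $\mathfrak{R}(\mu^{+}+\mu^{-})>0\Rightarrow\mu^{+}+\mu^{-}\neq 0$ needed to pass from equality of squares to equality of the roots.
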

\begin{proof}
	From \eqref{3.16}, it implies that  $(\mu^{+})^{2}=(\mu^{-})^{2}$ if and only if $\eta=0$ or $\tau=0$. Since  $(\tau,\eta)\in \Xi$, only  $\eta=0$ case need to study. When $\eta=0$, it follows that $\mu^{+}=\mu^{-}= \tau/c$.
\end{proof}

We will discuss the roots of  the symbol \eqref{3.19} in the instability case.
\begin{lemm}\label{lem:symbol}
	Let $\Sigma(\tau,\eta)$ be the symbol defined in \eqref{3.19}, for $(\tau, \eta) \in \Xi$. If   $U_{low}<\dot{v}^{+}_{1}<U_{upp}$, then $\Sigma(\tau,\eta)=0$ if only if
	\begin{equation}\label{3.21}
		\tau=  X_{1} \eta,
	\end{equation}
	where $X_{1}^{2}=\sqrt{c^{4}+ 4((\dot{G}^{+}_{11})^{2}+(\dot{G}^{+}_{12})^{2}+c^{2})(\dot{v}^{+}_{1})^{2}}- (\dot{v}^{+}_{1})^{2}-(\dot{G}^{+}_{11})^{2}-(\dot{G}^{+}_{12})^{2}-c^{2}>0$. The root $	\tau=  X_{1} \eta$ is simple, i.e. there exists a neighborhood $\mathcal{V}$ of $(X_{1}\eta, \eta)\in \Xi$ and a smooth $F$ defined on  $\mathcal{V}$ such that 
	\begin{equation*}
		\Sigma= (\tau-X_{1}\eta) F(\tau,\eta),~F(\tau,\eta)\neq 0 ~\text{for all} (\tau,\eta)\in \mathcal{V}, 
	\end{equation*}
	where $F(\tau, \eta)$ is defined as $c^{2}\eta^{2} \frac{d\phi }{d X}(\alpha X_1+ (1-\alpha) X))$.
\end{lemm}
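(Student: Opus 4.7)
\bigskip
\noindent\textbf{Proof proposal.}
The plan exploits the homogeneity of $\Sigma$ in $(\tau,\eta)$. I set $X=\tau/\eta$ for $\eta>0$ (the case $\eta<0$ is handled symmetrically) and work with the reduced equation $\Sigma(\tau,\eta)/\eta^{2}=0$. The first key step is the elementary identity
\begin{equation*}
(\mu^{+})^{2}-(\mu^{-})^{2}=\frac{(\tau+i\dot v^{+}_{1}\eta)^{2}-(\tau-i\dot v^{+}_{1}\eta)^{2}}{c^{2}}=\frac{4i\,\dot v^{+}_{1}\eta\tau}{c^{2}},
\end{equation*}
which lets me replace $(\mu^{+}-\mu^{-})/(\mu^{+}+\mu^{-})$ by $4i\dot v^{+}_{1}\eta\tau/(c^{2}(\mu^{+}+\mu^{-})^{2})$ and thus eliminate square roots in the numerator.

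The second, and really the crucial, step is to verify an algebraic miracle: setting $P:=\tau^{2}+(-(\dot v^{+}_{1})^{2}+(\dot G^{+}_{11})^{2}+(\dot G^{+}_{12})^{2}+c^{2})\eta^{2}$ and $Q:=\sqrt{P^{2}+4(\dot v^{+}_{1})^{2}\eta^{2}\tau^{2}}$ (chosen with $\mathfrak{R}Q>0$), one computes $(\mu^{+})^{2}+(\mu^{-})^{2}=2P/c^{2}$ and $\mu^{+}\mu^{-}=Q/c^{2}$, so that $(\mu^{+}+\mu^{-})^{2}=2(P+Q)/c^{2}$. Substituting into the rewritten $\Sigma$ and using $4(\dot v^{+}_{1})^{2}\eta^{2}\tau^{2}=Q^{2}-P^{2}=(Q-P)(Q+P)$ makes the fraction telescope, and $\Sigma$ collapses to the remarkably simple form
\begin{equation*}
\Sigma(\tau,\eta)=Q(\tau,\eta)-c^{2}\eta^{2}.
\end{equation*}
I expect this compact reformulation to be the only truly non-obvious step; everything else is then a consequence.

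Next, I would solve $\Sigma=0$, i.e.\ $Q=c^{2}\eta^{2}$. Squaring and writing $Y=\tau^{2}/\eta^{2}$ produces the quadratic
\begin{equation*}
\phi(Y):=Y^{2}+2\bigl((\dot v^{+}_{1})^{2}+(\dot G^{+}_{11})^{2}+(\dot G^{+}_{12})^{2}+c^{2}\bigr)Y+\bigl(U_{\mathrm{low}}^{2}-(\dot v^{+}_{1})^{2}\bigr)\bigl(U_{\mathrm{upp}}^{2}-(\dot v^{+}_{1})^{2}\bigr)=0.
\end{equation*}
Its discriminant simplifies to $c^{4}+4(\dot v^{+}_{1})^{2}(c^{2}+(\dot G^{+}_{11})^{2}+(\dot G^{+}_{12})^{2})$, giving the two roots $Y_{\pm}=-((\dot v^{+}_{1})^{2}+(\dot G^{+}_{11})^{2}+(\dot G^{+}_{12})^{2}+c^{2})\pm\sqrt{c^{4}+4(\dot v^{+}_{1})^{2}((\dot G^{+}_{11})^{2}+(\dot G^{+}_{12})^{2}+c^{2})}$. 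By Vieta, the product $Y_{+}Y_{-}$ is exactly $(U_{\mathrm{low}}^{2}-(\dot v^{+}_{1})^{2})(U_{\mathrm{upp}}^{2}-(\dot v^{+}_{1})^{2})$, which is strictly negative precisely under the instability condition $U_{\mathrm{low}}<|\dot v^{+}_{1}|<U_{\mathrm{upp}}$, so $Y_{+}>0>Y_{-}$ and $X_{1}^{2}=Y_{+}$ matches the formula in the statement. The candidate roots are $\tau=\pm X_{1}\eta$ (from $Y_{+}$) and $\tau=\pm i\sqrt{|Y_{-}|}\,\eta$ (from $Y_{-}$). Of these, the latter two are purely imaginary, and $-X_{1}\eta$ has the wrong sign of $\mathfrak{R}\tau$ for $\eta>0$, so only $\tau=X_{1}\eta$ lies in $\Xi$. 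I would also verify that at this candidate $\mathfrak{R}\mu^{\pm}>0$ is compatible with the branch choice made in deriving $\Sigma=Q-c^{2}\eta^{2}$ (a short check using $|\mu^{\pm}|=|\eta|$).

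Finally, for simplicity I would differentiate the identity $\Sigma(\Sigma+2c^{2}\eta^{2})=\eta^{4}\phi(X)$ (obtained by squaring $\Sigma+c^{2}\eta^{2}=Q$) in $\tau$. Evaluated at the root $\tau=X_{1}\eta$ where $\Sigma=0$, this gives $2c^{2}\eta^{2}\,\partial_{\tau}\Sigma=\eta^{3}\phi'(X_{1})$, and a direct computation $\phi'(X)=4X(X^{2}+(\dot v^{+}_{1})^{2}+(\dot G^{+}_{11})^{2}+(\dot G^{+}_{12})^{2}+c^{2})$ shows $\phi'(X_{1})>0$. Hence $\partial_{\tau}\Sigma\neq 0$ at $(X_{1}\eta,\eta)$, and the mean value theorem applied to $\phi$ along the segment between $X_{1}$ and $X=\tau/\eta$ yields the factorization $\Sigma(\tau,\eta)=(\tau-X_{1}\eta)\,F(\tau,\eta)$ with $F$ non-vanishing on a neighborhood of $(X_{1}\eta,\eta)$, exactly of the form stated. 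Once the simplification $\Sigma=Q-c^{2}\eta^{2}$ is in hand, every subsequent step is routine; the delicate issue is to carry through the telescoping carefully and to keep track of the branch conditions $\mathfrak{R}\mu^{\pm}>0$ throughout.
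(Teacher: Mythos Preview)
Your proposal is correct and follows essentially the same route as the paper. Both arguments reduce $\Sigma=0$ to the product relation $c^{2}\mu^{+}\mu^{-}=c^{2}\eta^{2}$ (your $Q=c^{2}\eta^{2}$ is exactly $c^{2}\mu^{+}\mu^{-}$ once the branch $\mathfrak{R}\mu^{\pm}>0$ is fixed), then square to obtain the same quartic in $X=\tau/\eta$, isolate the unique root $X_{1}$ in $\Xi$, and establish simplicity by showing a first derivative is non-zero at $X_{1}$. The only cosmetic differences are that the paper reaches $\Sigma=c^{2}(\mu^{+}\mu^{-}-\eta^{2})$ via $(\mu^{+}-\mu^{-})^{2}/((\mu^{+})^{2}-(\mu^{-})^{2})$ rather than your $P,Q$ telescoping, and for simplicity it applies Taylor's formula directly to $\phi(X)=\tilde\mu^{+}\tilde\mu^{-}-1$ instead of differentiating the squared identity; the branch check you flag (``a short check using $|\mu^{\pm}|=|\eta|$'') is exactly what the paper carries out explicitly via the square-root formula for $a+ib$.
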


\begin{proof}
	
	In according with the definition of $\Sigma$ and Lemma 3.1,  we can easily  verify $\Sigma(\tau,0)=\tau^{2}\neq 0$ for $(\tau,0)\in \Xi$.  Meanwhile, it is easy to check that $\Sigma(\tau, \eta)= \Sigma(\tau, -\eta)$. Thus we can assume without loss of generality that $\tau\neq 0$, $\eta\neq 0 $ and $ \eta>0$ and from Lemma 3.1 we know that $\mu^{+}- \mu^{-}\neq 0$. Therefore we compute
	\begin{equation}\label{3.22}
		\begin{aligned}
			\frac{\mu^{+}-\mu^{-}}{\mu^{+}+\mu^{-}}= \frac{(\mu^{+}-\mu^{-})^{2}}{(\mu^{+})^{2}-(\mu^{-})^{2}}= \frac{c^{2}(\mu^{+}-\mu^{-})^{2}}{4i \dot{v}^{+}_{1}\tau},
		\end{aligned}
	\end{equation}
	and
	\begin{equation}\label{3.23}
		\begin{aligned}
			(\mu^{+}-\mu^{-})^{2}= 2((\frac{\tau}{c})^{2}- (\frac{\dot{v}^{+}_{1}\eta}{c})^{2}+ \frac{((\dot{G}_{11}^{+})^{2}+(\dot{G}_{12})^{2})\eta^{2}}{c^{2}} +\eta^{2}- \mu^{+}\mu^{-}),
		\end{aligned}
	\end{equation}
	therefore we deduce that
	\begin{equation}\label{3.24}
		\begin{aligned}
			\frac{\mu^{+}-\mu^{-}}{\mu^{+}+\mu^{-}}=\frac{2[{\tau}^{2}- (\dot{v}^{+}_{1}\eta)^{2}+ ((\dot{G}_{11}^{+})^{2}+(\dot{G}_{12})^{2})\eta^{2} +c^{2}(\eta^{2}- \mu^{+}\mu^{-})]}{4i \dot{v}^{+}_{1} \tau}
		\end{aligned}
	\end{equation}
	and substituting the above expression \eqref{3.24} into \eqref{3.18} we can rewrite it as
	\begin{equation}\label{3.25}
		\begin{aligned}
			c^{2}( \mu^{+}\mu^{-}-\eta^{2})\hat{g}=0,~~\text{on}~\Gamma.
		\end{aligned}
	\end{equation}
	The symbol $\Sigma$ can be  reformulated as
	\begin{equation}\label{3.26}
		\begin{aligned}
			\Sigma= c^{2}( \mu^{+}\mu^{-}-\eta^{2}).
		\end{aligned}
	\end{equation}
	
	Let us set $\mu^{+}\mu^{-}-\eta^{2}=0$ and introduce two quantities:
	\begin{equation}\label{3.27}
		X=\frac{\tau}{ \eta},~\tilde{\mu}^{\pm}= \frac{\mu^{\pm}}{ \eta}.
	\end{equation}
	Therefore we can deduce
	\begin{equation}\label{3.28}
		\tilde{\mu}^{+}\tilde{\mu}^{-}=-1,
	\end{equation}
	and
	\begin{equation}\label{3.29}
		(\tilde{\mu}^{+})^{2}(\tilde{\mu}^{-})^{2}=1.
	\end{equation}
	
	By the formula of the roots $\mu^{\pm}$, it follows that
	\begin{equation}\label{3.30}
		(\tilde{\mu}^{+})^{2}=\frac{1}{c^{2}}[(X+i\dot{v}^{+}_{1})^{2}+(\dot{G}^{+}_{11})^{2}+(\dot{G}^{+}_{12})^{2} ]+1,
	\end{equation}
	and
	\begin{equation}\label{3.31}
		(\tilde{\mu}^{-})^{2}=\frac{1}{c^{2}}[(X-i\dot{v}^{+}_{1})^{2}+(\dot{G}^{+}_{11})^{2}+(\dot{G}^{+}_{12})^{2} ]+1,
	\end{equation}
	Hence we have
	\begin{equation}\label{3.32}
		[(X+i\dot{v}^{+}_{1})^{2}+(\dot{G}^{+}_{11})^{2}+(\dot{G}^{+}_{12})^{2}+c^{2}][(X-i\dot{v}^{+}_{1})^{2}+(\dot{G}^{+}_{11})^{2}+(\dot{G}^{+}_{12})^{2}+c^{2}]=c^{4},
	\end{equation}
	which leads to the following equation for $X^{2}$:
	\begin{equation}\label{3.33}
		\begin{aligned}
			&X^{4} + 2 ((\dot{v}^{+}_{1})^{2}+(\dot{G}^{+}_{11})^{2}+(\dot{G}^{+}_{12})^{2}+c^{2}) X^{2} + (\dot{v}^{+}_{1})^{4}- 2((\dot{G}^{+}_{11})^{2}+(\dot{G}^{+}_{12})^{2}+c^{2}) (\dot{v}^{+}_{1})^{2} \\
			&+ ((\dot{G}^{+}_{11})^{2}+(\dot{G}^{+}_{12})^{2})^{2}+ 2c^{2} ((\dot{G}^{+}_{11})^{2}+(\dot{G}^{+}_{12})^{2})=0.
		\end{aligned}
	\end{equation}
	Using the quadratic formula, the two roots of the above equation are
	\begin{equation}\label{3.34}
		X_{1}^{2}= -(\dot{v}^{+}_{1})^{2}-(\dot{G}^{+}_{11})^{2}-(\dot{G}^{+}_{12})^{2}-c^{2}+\sqrt{c^{4}+ 4((\dot{G}^{+}_{11})^{2}+(\dot{G}^{+}_{12})^{2}+c^{2})(\dot{v}^{+}_{1})^{2}},
	\end{equation}
	and
	\begin{equation}\label{3.35}
		X_{2}^{2}=- (\dot{v}^{+}_{1})^{2}-(\dot{G}^{+}_{11})^{2}-(\dot{G}^{+}_{12})^{2}-c^{2}- \sqrt{c^{4}+ 4((\dot{G}^{+}_{11})^{2}+(\dot{G}^{+}_{12})^{2}+c^{2})(\dot{v}^{+}_{1})^{2}},
	\end{equation}
	We claim that the points $(\tau,\eta)\in \Sigma$ with $\tau=\pm  X_{2} \eta$ are not the roots of $\mu^{+}\mu^{-}= \eta^{2}$. Without loss of generality, we can assume that $Y_{2}$ is positive. From \eqref{3.35}, we deduce
	\begin{equation}\label{3.38}
		X_{2}=iY_{2},~Y_{2}\geq \dot{v}^{+}_{1} + \sqrt{ (\dot{G}^{+}_{11})^{2}+(\dot{G}^{+}_{12})^{2}+c^{2}},
	\end{equation}
	from this  we deduce $ Y_{2}\pm \dot{v}^{+}_{1}>\sqrt{ (\dot{G}^{+}_{11})^{2}+(\dot{G}^{+}_{12})^{2}+c^{2}}$. In accord with the equation \eqref{3.32} and \eqref{3.33}, it follows  that $$\tilde{\mu}^{+}= i \sqrt{\frac{(Y_{2}+\dot{v}^{+}_{1})^{2} - (\dot{G}^{+}_{11})^{2}-(\dot{G}^{+}_{12})^{2}}{c^{2}} -1},\tilde{\mu}^{-}=  i \sqrt{\frac{(Y_{2}-\dot{v}^{+}_{1})^{2}-(\dot{G}^{+}_{11})^{2}-(\dot{G}^{+}_{12})^{2}}{c^{2}} -1}.$$
	From which we know that   $\tilde{\mu}^{+}\tilde{\mu}^{-}=1$  is not satisfied. Similarly, we can show that $(\tau,\eta)\in \Sigma$ with $\tau=- X_{2}\eta$ is not root of $\mu^{+}\mu^{-}= \eta^{2}$. On the other hand, from \eqref{3.38}, we know that $\tau=iY_{2}\eta$ is imaginary root, thus it implies that $\mathfrak{R} \tau=0$ and  $(\pm X_{2}\eta, \eta)\nsubseteq \Xi$.

	Now we  focus on  the root $X_{1}^{2}$. If $U_{low}<\dot{v}^{+}_{1}<U_{upp}$, from \eqref{3.34}, we know that $X_{1}^{2}$ is positive, it follows that $\tau = \pm  X_{1} \eta$ are real. The point $(-  X_{1} \eta,\eta)\nsubseteq \Xi$, thus we  omit this point, we only study the root $\tau = +  X_{1} \eta$. Using a fact that square roots of the complex number $a+ib$ are 
	\begin{equation}\label{3.39}
		\pm\{\sqrt{\frac{r+a}{2}} + i sgn(b)\sqrt{\frac{r-a}{2}}\},~r=|a+ib|.
	\end{equation}
	In our case here, we compute
	\begin{equation}\label{3.40}
		\mu^{+}=\sqrt{\frac{r+a}{2}} + i\sqrt{\frac{r-a}{2}},\mu^{-}=\sqrt{\frac{r+a}{2}} - i\sqrt{\frac{r-a}{2}},
	\end{equation}
	where 
	\begin{equation}\label{3.41}
		a=\frac{X^{2}_{1}- (\dot{v}^{\pm}_{1})^{2}+(\dot{G}^{+}_{11})^{2}+(\dot{G}^{+}_{12})^{2}+ c^{2}}{c^{2}} \eta^{2},~b= \frac{2X_{1}\dot{v}^{\pm}}{c^{2}} \eta^{2},
	\end{equation}
	so that $	\mu^{+}	\mu^{-}=r>0$, therefore we deduce that in case of $U_{low}<\dot{v}^{+}_{1}<U_{upp}$,    the  root of  the symbol $\Sigma$ is  the point $(+ X_{1}\eta, \eta)$. In summary we can get a root $(\tau,\eta)$ with $\mathfrak{R} \tau>0$, which is a unstable solution. 
	
	Now we prove that the root $( X_{1}\eta,\eta)$ are simple. We define $\phi(X) = \tilde{\mu}^{+} \tilde{\mu}^{-}-1$, therefore we have $\Sigma=c^{2}\eta^{2}\phi(X) $. By Taylor formula, we can write 
	\begin{equation}\label{3.42}
		\Sigma= c^{2}\eta^{2} (\phi(X_{1})+ (X-X_{1}) \frac{d\phi }{d X}(\alpha X_1+ (1-\alpha) X)), 0<\alpha<1.
	\end{equation}
	By direct computation, we have 
	\begin{equation*}
		\phi(X_{1})= 0,~ \frac{d\phi }{d X}= \frac{2X/c}{\tilde{\mu}^{+} \tilde{\mu}^{-}} \{ (\frac{X}{c})^{2}+ (\dot{v}^{+}_{1}/c)^{2}+1\}.
	\end{equation*}
	Since $\frac{d\phi }{d X} (X_{1})\neq  0$, by the continuity of  $\frac{d\phi }{d X}$, it follows that $\frac{d\phi }{d X} (\alpha X_1+ (1-\alpha) X))\neq  0$. Therefore we complete the proof of this lemma.
\end{proof}

	\section{Ill-posedness of  solutions for the linear problem}
\subsection{Uniqueness for the linearized equations \eqref{3.5}-\eqref{3.6}}
\quad To begin with, we prove a uniqueness result for the linearized system \eqref{3.5}-\eqref{3.6}. 
\begin{lemm}
	Let $f,h,v,G_{j}$ be the solution to the linearized equations \eqref{3.5}-\eqref{3.6} with the initial data $(f,h,v,G_{j})|_{t=0}=0$. Then it holds $(f,h,v,G_{j})\equiv0$.
\end{lemm}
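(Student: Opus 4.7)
The plan is a Friedrichs-type $L^2$ energy estimate for the symmetric-hyperbolic bulk system, combined with a boundary energy for the front $f$ derived from the second-order equation \eqref{3.7}, and closed at a sufficiently high Sobolev level by Gronwall's inequality.

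First I would test the three evolution equations in \eqref{3.5} against $(c^2 h,\,v,\,G_j)$ and integrate on $\Omega^\pm$. Since the rectilinear coefficients are constant, the transport terms $\dot v_1\partial_1(\cdot)$ are exact $x_1$--derivatives and vanish on integration, and the $v$--$G_j$ coupling collapses into $\sum_{i,j}\dot G_{1j}\partial_1(v_i G_{ij})$, which is again an exact $x_1$--derivative. The pressure/velocity coupling produces $c^2\,\mathrm{div}(hv)$, whose bulk integral reduces to the interface flux $-c^2\int_\Gamma [hv_2]\,dx_1$. Inserting the jump conditions $[h]=0$ and $[v_2]=2\dot v^+_1\partial_1 f$ from \eqref{3.6} gives
\[
\frac{d}{dt}\Bigl(\tfrac12\int_\Omega\bigl[c^2 h^2+|v|^2+\textstyle\sum_j|G_j|^2\bigr]dx\Bigr)=-2c^2\dot v^+_1\int_\Gamma h\,\partial_1 f\,dx_1.
\]

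Second, to absorb this boundary flux I would test the linearized front equation \eqref{3.7} against $\partial_t f$ on $\Gamma$, obtaining
\[
\frac{d}{dt}\Bigl(\tfrac12\int_\Gamma\bigl[|\partial_t f|^2+((\dot v^+_1)^2-U_{low}^2)|\partial_1 f|^2\bigr]dx_1\Bigr)=-\tfrac{c^2}{2}\int_\Gamma\partial_2(h^++h^-)\,\partial_t f\,dx_1,
\]
and repeat the same computation after applying tangential derivatives $\partial_1^\alpha$, so that the traces $h|_\Gamma$ and $\partial_2 h|_\Gamma$ appearing on both right-hand sides are controlled by a higher-order Sobolev norm of the unknowns via the standard half-space trace inequality. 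Summing the bulk and boundary energies at a sufficiently high order $k\geq 3$ (matching the regularity class in Definition~\ref{definition}), I expect a closed inequality
\[
\frac{d}{dt}\mathcal E_k(t)\leq C\,\mathcal E_k(t),\qquad \mathcal E_k(0)=0,
\]
which by Gronwall's inequality forces $\mathcal E_k\equiv 0$ on $[0,T]$, and hence $(f,h,v,G_j)\equiv 0$.

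The main obstacle is precisely the mixed boundary term $-2c^2\dot v^+_1\int_\Gamma h\,\partial_1 f$: it cannot be controlled by the bulk $L^2$ energy alone, and absorbing it forces a derivative loss in the estimate, which is a direct reflection of the underlying Kelvin--Helmholtz ill-posedness. For the uniqueness statement this loss is harmless provided one works in a sufficiently smooth class, as is the case in Definition~\ref{definition}; the careful pairing of bulk traces against interface quantities at the top derivative level, together with the fact that $(\dot v^+_1)^2-U_{low}^2>0$ under the instability condition \eqref{2.28} so that $E_2$ is genuinely an energy, is the technical heart of the argument.
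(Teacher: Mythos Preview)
Your bulk energy identity is correct and coincides with the paper's first step: testing the three evolution equations against $(c^2 h,v,G_j)$ on $\Omega^\pm$ produces exactly
\[
\frac{d}{dt}\Bigl(\tfrac12\int_\Omega\bigl[c^2 h^2+|v|^2+\textstyle\sum_j|G_j|^2\bigr]\,dx\Bigr)=-2c^2\dot v^+_1\int_\Gamma h\,\partial_1 f\,dx_1.
\]
From here, however, your route diverges from the paper's, and the divergence hides a genuine gap.

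You propose to pair this with the second-order front identity obtained from \eqref{3.7}, and then close a Gronwall inequality at a ``sufficiently high'' Sobolev level $k$. But the loss of derivatives you flag is structural and does not disappear by raising $k$. At level $\alpha$ the bulk boundary term is $\int_\Gamma \partial_1^\alpha h\,\partial_1^{\alpha+1}f$, whose first factor needs $\|h\|_{H^{\alpha+1}(\Omega)}$ via trace; the front boundary term is $\int_\Gamma \partial_2\partial_1^\alpha h\,\partial_t\partial_1^\alpha f$, whose first factor needs $\|h\|_{H^{\alpha+2}(\Omega)}$. Whatever top index you choose, the right-hand side always calls for one or two more derivatives of $h$ than the energy controls, so you never reach an inequality of the form $\tfrac{d}{dt}\mathcal E_k\le C\mathcal E_k$. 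Working ``in a sufficiently smooth class'' does not help: from $\tfrac{d}{dt}\mathcal E_k\le C\mathcal E_{k+1}$ with $\mathcal E_k(0)=0$ and $\mathcal E_{k+1}$ merely finite you cannot conclude $\mathcal E_k\equiv0$.

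The paper closes the estimate differently. Instead of \eqref{3.7} it uses the first-order equation $\partial_t f=v_2-\dot v_1\partial_1 f$ tested against $f$ itself, obtaining $\tfrac12\partial_t\int_\Gamma|f|^2=\int_\Gamma v_2 f$. It then controls the two boundary integrals by an anisotropic trace inequality $\|\phi\|_{L^2(\Gamma)}^2\lesssim \|\dot v_1\partial_1\phi\|_{L^2(\Omega)}\|\phi\|_{L^2(\Omega)}+\|\phi\|_{L^2(\Omega)}^2$, and --- this is the key device --- restricts to solutions that are band-limited in $x_1$ at radius $R$, so that $\|\partial_1\phi\|_{L^2}\le R\|\phi\|_{L^2}$. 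The estimate then closes at the $L^2$ level with a constant depending on $R$, and Gronwall gives $(f,h,v,G_j)\equiv0$. Since the linear system has constant coefficients in $x_1$, band-limiting is preserved by the flow, so this suffices for the later application where the relevant linear data are supported in a fixed Fourier annulus. Your argument would be repaired by adopting the same band-limited hypothesis (or, equivalently, arguing mode-by-mode after Fourier transform in $x_1$); without it the closing step does not go through.
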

\begin{proof}
	Taking  the standard inner product of the first equation and second equation in \eqref{3.5} with $h^{+}, v^{+}$ and integrating over $\Omega^{+}$, we obtain
	\begin{equation}\label{4.1}
		\frac{1}{2}	\partial_{t} \int_{\Omega^{+}}c^{2} |h^{+}|^{2}+\frac{1}{2} \int_{\Omega^{+}}\dot{v}_{1} \partial_{1}(c^{2} |h^{+}|^{2}) + \int_{\Omega^{+}} c^{2} h^{+} {\rm div} v^{+}=0  .
	\end{equation}
	and
	\begin{equation}\label{4.2}
		\frac{1}{2}	\partial_{t}  \int_{\Omega^{+}}|v^{+}|^{2}+\frac{1}{2} \int_{\Omega^{+}}\bar{v}^{+}_{1} \partial_{1}(|v^{+}|^{2}) + \int_{\Omega^{+}} c^{2} \nabla h^{+} \cdot v^{+}=\sum_{j=1}^{2} \int_{\Omega^{+}} \dot{G}^{+}_{1j}\partial_{1} G^{+}_{j} v^{+}. 
	\end{equation}
	Meanwhile, we take the inner product of the third equation in \eqref{3.5} with $G_{j}$ and integrate over $\Omega^{+}$ to arrive at 
		\begin{equation}\label{4.3}
		\frac{1}{2}	\partial_{t}  \int_{\Omega^{+}}|G_{j}^{+}|^{2}+\frac{1}{2} \int_{\Omega^{+}}\dot{v}^{+}_{1} \partial_{1}(|G_{j}^{+}|^{2}) =\sum_{j=1}^{2} \int_{\Omega^{+}} \dot{G}^{+}_{1j}\partial_{1} v  G^{+}_{j}.
	\end{equation}
	 After integrating by parts, the second terms on the left hand side of  \eqref{4.1} and \eqref{4.2} vanish, also the sum of the terms on the right hand side of  \eqref{4.2} and \eqref{4.3} vanish, thus adding \eqref{4.1}, \eqref{4.2} and \eqref{4.3} and integrating by parts, we get
	\begin{equation}\label{4.4}
		\frac{1}{2}	\partial_{t} \int_{\Omega^{+}}(c^{2}|h^{+}|^{2}+|v^{+}|^{2} +|G_{j}^{+}|^{2}) =c^{2}(\varrho_{0}) \int_{\Gamma} h^{+} v^{+} \cdot e_{2}.
	\end{equation}
	A similar result holds on $\Omega_{-}$ with the opposite sign  on the right hand side: 
	\begin{equation}\label{4.5}
		\frac{1}{2}	\partial_{t} \int_{\Omega^{-}}(c^{2} |h^{-}|^{2}+|v^{-}|^{2}+|G_{j}^{-}|^{2} ) =-c^{2}(\varrho_{0}) \int_{\Gamma} h^{-} v^{-}\cdot e_{2}.
	\end{equation}

	Adding \eqref{4.4} and \eqref{4.5} implies 
	\begin{equation}\label{4.6}
		\frac{1}{2}	\partial_{t} \int_{\Omega}(c^{2} |h|^{2}+|v|^{2}+|G_{j}|^{2} ) =c^{2}\int_{\Gamma} [h v \cdot e_{2} ]= 2c^{2}(\varrho_{0}) \int_{\Gamma} h \bar{v}^{+}_{1} \partial_{1} f.
	\end{equation}
	Also multiplying the fourth  equation in \eqref{3.5} by $f$, we have 
	\begin{equation}\label{4.7}
		\frac{1}{2}	\partial_{t}  \int_{\Gamma}|f|^{2} = \int_{\Gamma} v_{2} f.
	\end{equation}
	Adding \eqref{4.6} and \eqref{4.7} and using the Holder inequality  yields
	\begin{equation}\label{4.8}
		\begin{aligned}
			&\frac{1}{2}	\partial_{t} \int_{\Omega}(c^{2}|h|^{2}+|v|^{2}+|G_{j}|^{2} ) +	\frac{1}{2}	\partial_{t}  \int_{\Gamma}|f|^{2}\\
			&= 2c^{2}\int_{\Gamma} h \dot{v}^{+}_{1} \partial_{1} f+\int_{\Gamma} v_{2} f  \\
			&\leq 2c^{2}\dot{v}^{+}_{1}  \|h\|_{L^{2}(\Gamma)} \|\partial_{1} f\|_{L^{2}(\Gamma)}+ \| v_{2} \|_{L^{2}(\Gamma)} \|f\|_{L^{2}(\Gamma)}:=J.
		\end{aligned}
	\end{equation}
To avoid the loss of derivatives, we suppose that the solutions are band-limited at radius $R>0$, i.e., that 
$$	\underset{x_{2}\in \mathbb{R}}{\cup}supp(|\hat{f}(\cdot)|+ |\hat{h}(\cdot,x_{2})|+|\hat{v}(\cdot,x_{2})|+|\hat{G_{j}}(\cdot,x_{2})|) \subset B(0,R),$$
also we introduce an   anisotropic trace estimate in Lemma B.1 (\cite{Xin}):
\begin{equation}\label{4.9}
	\| \phi\|^{2}_{L^{2}(\Gamma)} \leq C(\|\dot{v}\cdot \nabla \phi\|_{L^{2}(\Omega)}\| \phi\|_{L^{2}(\Omega)}+ \| \phi\|^{2}_{L^{2}(\Omega)}),
\end{equation}
where $\dot{v}=(\dot{v}_1,0)$ with $\dot{v}^{+}_{1}=U_{low}+c\epsilon_{0}>0$.
Now we estimate $J$ as follows:
\begin{equation}\label{4.10}
	\begin{aligned}
		J&\lesssim (\|\dot{v}^{+}_{1}\partial_{1} h\|_{L^{2}(\Omega)}\| h\|_{L^{2}(\Omega)}+ \| h\|^{2}_{L^{2}(\Omega)}) ^{\frac{1}{2}}  \|\eta \hat{f}\|_{L^{2}(\Gamma)}\\
		&+  (\|\dot{v}^{+}_{1}\partial_{1} v_{2}\|_{L^{2}(\Omega)}\| v_{2}\|_{L^{2}(\Omega)}+ \| v_{2}\|^{2}_{L^{2}(\Omega)})^{\frac{1}{2}} \|f\|_{L^{2}(\Gamma)}\\
		&\lesssim ((\dot{v}^{+}_{1}R+1)R^2 \| h\|^{2}_{L^{2}(\Omega)}) ^{\frac{1}{2}}  \| f\|_{L^{2}(\Gamma)}+  ((\dot{v}^{+}_{1}R+1) \| v_{2}\|^{2}_{L^{2}(\Omega)})^{\frac{1}{2}} \|f\|_{L^{2}(\Gamma)}.
	\end{aligned}
\end{equation}
Finally plugging \eqref{4.10} into \eqref{4.8} and taking use of Gronwall's inequality, for arbitrary $R$, we have 
	\begin{equation}\label{4.11}
		\| f\|^{2}_{L^{2}(\Gamma)} + \| h\|^{2}_{L^{2}(\Omega)}+ \| v\|^{2}_{L^{2}(\Omega)}+ \|G_{j}\|^{2}_{L^{2}(\Omega)}\leq C(\| f_{0}\|^{2}_{L^{2}(\Gamma)} + \| h_{0}\|^{2}_{L^{2}(\Omega)}+ \| v_{0}\|^{2}_{L^{2}(\Omega)}+ \| G_{0,j}\|^{2}_{L^{2}(\Omega)}).
	\end{equation}
	From this, we infer that if $(f,h,v,G_{j})|_{t=0}=0$, then it follows that $(f,h,v,G_{j})\equiv0$. 
	
\end{proof}

\subsection{Discontinuous dependence on the initial data}
\quad  In according  with Lemma 3.2 and \eqref{3.34}, if $\sqrt{(\dot{G}^{+}_{11})^{2}+(\dot {G}^{+}_{12})^{2}}<\dot{v}^{+}_{1}<\sqrt{2c^{2}+ (\dot{G}^{+}_{11})^{2}+(\dot{G}^{+}_{12})^{2}}$, we deduce that $X_{1}^{2}$ is positive, it follows that $\tau =    X_{1} \eta$ is  real and positive. Next, we claim that the equation \eqref{3.7} can be simplied to the following form
\begin{equation}\label{4.12}
	\partial_{t}^{2}f + 	\lambda \partial^{2}_{1} f=0.
\end{equation}
where $\lambda$ must be positive in the case of  $\sqrt{(\dot{G}^{+}_{11})^{2}+(\dot {G}^{+}_{12})^{2}}<\dot{v}^{+}_{1}<\sqrt{2c^{2}+ (\dot{G}^{+}_{11})^{2}+(\dot{G}^{+}_{12})^{2}}$. In fact, plugging $f= e^{\tau t} g$ into  equation \eqref{4.12}, we get $\tau^{2} g + 	\lambda \partial^{2}_{x} g=0$. Then we take Fourier transform  of this identity with respect the $x_1$ variable, we have
\begin{equation}\label{4.13}
	(\tau^{2} -	\lambda \eta^{2}) \hat{g} =0,
\end{equation}
which yields   $\lambda = \frac{\tau^{2}}{\eta^{2}}$. From Lemma 3.2, we know that  $X_{1}^{2}=\frac{\tau^{2}}{\eta^2}>0$ in the case of  $\sqrt{(\dot{G}^{+}_{11})^{2}+(\dot {G}^{+}_{12})^{2}}<\dot{v}^{+}_{1}<\sqrt{2c^{2}+ (\dot{G}^{+}_{11})^{2}+(\dot{G}^{+}_{12})^{2}}$. Thus we have $\lambda=X_{1}^{2}>0$. Therefore  \eqref{3.7} is indeed a linear elliptic equation. 

Firstly, we state the following lemma which gives the explicit expression of the solutions $\hat{w}_1,\hat{w}_2$ and $\hat{E}_{1j}, \hat{E}_{2j}$ in terms of $\hat{g}(\eta)$. 

\begin{lemm}
    The horizontal Fourier transform of the solution $w_{i}$ snd $E_{ij}$  with $i,j=1,2$ to \eqref{3.10} satisfies the following equality
    \begin{equation}\label{4.42}
		\hat{w}_{1}(\eta, x_{2})=\left\{
		\begin{aligned}
			& -\frac{ (\mu^{+}-\mu^{-})c^{2} i \eta (\tau+ i\dot{v}^{+}_{1} \eta) }{ (\tau+i  \dot{v}^{+}_{1} \eta)^{2}+ ((\dot{G}^{+}_{11})^{2}+(\dot{G}^{+}_{12})^{2}) \eta^{2} }\hat{g}(\eta)e^{-\mu^{+}x_2}&x_2\ge 0,\\
			&-\frac{ (\mu^{+}-\mu^{-})c^{2} i \eta (\tau- i\dot{v}^{+}_{1} \eta) }{(\tau-i  \dot{v}^{+}_{1} \eta)^{2}+ ((\dot{G}^{+}_{11})^{2}+(\dot{G}^{+}_{12})^{2}) \eta^{2} }\hat{g} (\eta)e^{\mu^{-}x_2}&x_2<0,
		\end{aligned}
		\right.
	\end{equation}
	\begin{equation}\label{4.43}
		\hat{w}_{2}(\eta, x_{2})=\left\{
		\begin{aligned}
			& \frac{(\mu^{+}-\mu^{-})c^{2} \mu^{+} (\tau+ i\dot{v}^{+}_{1} \eta) }{(\tau+i  \dot{v}^{+}_{1} \eta)^{2}+ ((\dot{G}^{+}_{11})^{2}+(\dot{G}^{+}_{12})^{2}) \eta^{2}}\hat{g}(\eta)e^{-\mu^{+}x_2}&x_2\ge 0,\\
			& -\frac{(\mu^{+}-\mu^{-})c^{2} \mu^{-} (\tau- i\dot{v}^{+}_{1} \eta) }{(\tau-i  \dot{v}^{+}_{1} \eta)^{2}+ ((\dot{G}^{+}_{11})^{2}+(\dot{G}^{+}_{12})^{2}) \eta^{2}}\hat{g}(\eta)e^{\mu^{-}x_2}&x_2<0,
		\end{aligned}
		\right.
	\end{equation}
    \begin{equation}\label{4.421}
		\hat{E}_{1j}(\eta, x_{2})=\left\{
		\begin{aligned}
			& -\frac{ i\dot{G}^+_{1j}\eta (\mu^{+}-\mu^{-})c^{2} i \eta (\tau+ i\dot{v}^{+}_{1} \eta) }{(\tau+i\dot{v}^{+}_{1}\eta) [(\tau+i  \dot{v}^{+}_{1} \eta)^{2}+ ((\dot{G}^{+}_{11})^{2}+(\dot{G}^{+}_{12})^{2}) \eta^{2} ]}\hat{g}(\eta)e^{-\mu^{+}x_2}&x_2\ge 0,\\
			&-\frac{  i\dot{G}^-_{1j}\eta (\mu^{+}-\mu^{-})c^{2} i \eta (\tau- i\dot{v}^{+}_{1} \eta) }{(\tau-i\dot{v}^{+}_{1}\eta)[(\tau-i  \dot{v}^{+}_{1} \eta)^{2}+ ((\dot{G}^{+}_{11})^{2}+(\dot{G}^{+}_{12})^{2}) \eta^{2}] }\hat{g} (\eta)e^{\mu^{-}x_2}&x_2<0,
		\end{aligned}
		\right.
	\end{equation}
	and
	\begin{equation}\label{4.431}
		\hat{E}_{2j}(\eta, x_{2})=\left\{
		\begin{aligned}
			& \frac{ i\dot{G}^+_{1j}\eta (\mu^{+}-\mu^{-})c^{2} \mu^{+} (\tau+ i\dot{v}^{+}_{1} \eta) }{(\tau+i\dot{v}^{+}_{1}\eta)[(\tau+i  \dot{v}^{+}_{1} \eta)^{2}+ ((\dot{G}^{+}_{11})^{2}+(\dot{G}^{+}_{12})^{2}) \eta^{2}]}\hat{g}(\eta)e^{-\mu^{+}x_2}&x_2\ge 0,\\
			& -\frac{ i\dot{G}^-_{1j}\eta (\mu^{+}-\mu^{-})c^{2} \mu^{-} (\tau- i\dot{v}^{+}_{1} \eta) }{(\tau-i\dot{v}^{+}_{1}\eta)[(\tau-i  \dot{v}^{+}_{1} \eta)^{2}+ ((\dot{G}^{+}_{11})^{2}+(\dot{G}^{+}_{12})^{2}) \eta^{2}]}\hat{g}(\eta)e^{\mu^{-}x_2}&x_2<0.
		\end{aligned}
		\right.
	\end{equation}
\end{lemm}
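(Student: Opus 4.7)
The plan is to derive the four stated formulas by treating the transformed system in $(\hat{w},\hat{E}_{j})$ as a simple algebraic system once the expression \eqref{3.15} for $\hat{m}^{\pm}$ is taken as given. First, I would take the Fourier transform in the $x_{1}$ variable of the momentum equation and the deformation equation in \eqref{3.10}, obtaining
\begin{equation*}
(\tau + i\dot{v}_{1}\eta)\hat{w} + c^{2}\bigl(i\eta,\partial_{2}\bigr)^{T}\hat{m} = \sum_{j=1}^{2} i\dot{G}_{1j}\eta\,\hat{E}_{j},\qquad (\tau + i\dot{v}_{1}\eta)\hat{E}_{j} = i\dot{G}_{1j}\eta\,\hat{w}.
\end{equation*}
The second identity, valid because $\mathfrak{R}\tau>0$ guarantees $\tau + i\dot{v}_{1}\eta\neq 0$, gives $\hat{E}_{j} = \frac{i\dot{G}_{1j}\eta}{\tau + i\dot{v}_{1}\eta}\hat{w}$, and substituting back in the momentum equation eliminates $\hat{E}_{j}$ and yields
\begin{equation*}
\hat{w} = -\,\frac{c^{2}(\tau + i\dot{v}_{1}\eta)}{(\tau + i\dot{v}_{1}\eta)^{2} + ((\dot{G}^{+}_{11})^{2}+(\dot{G}^{+}_{12})^{2})\eta^{2}}\,\bigl(i\eta,\partial_{2}\bigr)^{T}\hat{m}.
\end{equation*}

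Next, I would plug the explicit formula \eqref{3.15} into this expression. In $\Omega^{+}$ the factor $(i\eta,\partial_{2})^{T}$ acts on $Ce^{-\mu^{+}x_{2}}$, contributing $i\eta$ in the horizontal component and $-\mu^{+}$ in the vertical one; symmetrically in $\Omega^{-}$ using $\dot v_1^- = -\dot v_1^+$ and the exponential $e^{\mu^{-}x_{2}}$. The crucial simplification comes from the algebraic relation
\begin{equation*}
(\mu^{+})^{2} - (\mu^{-})^{2} = \frac{(\tau+i\dot{v}^{+}_{1}\eta)^{2}-(\tau-i\dot{v}^{+}_{1}\eta)^{2}}{c^{2}} = \frac{4i\dot{v}^{+}_{1}\tau\eta}{c^{2}},
\end{equation*}
which follows directly from the definition of $\mu^{\pm}$ in \eqref{3.16}; together with $(\mu^{+})^{2}-(\mu^{-})^{2}=(\mu^{+}+\mu^{-})(\mu^{+}-\mu^{-})$, this gives $\frac{4i\dot{v}^{+}_{1}\tau\eta}{c^{2}(\mu^{+}+\mu^{-})} = \mu^{+}-\mu^{-}$. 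This identity converts the prefactor $\frac{4i\dot{v}^{+}_{1}\tau\eta}{c^{2}(\mu^{+}+\mu^{-})}$ appearing in \eqref{3.15} into the $(\mu^{+}-\mu^{-})$ form appearing in the target formulas, and after carrying through the multiplication one recovers \eqref{4.42} and \eqref{4.43} for $\hat w_1^\pm$ and $\hat w_2^\pm$ respectively.

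Finally, the formulas \eqref{4.421} and \eqref{4.431} for $\hat{E}_{ij}^{\pm}$ are obtained by multiplying the just-derived expressions for $\hat{w}_{1}^{\pm}$ and $\hat{w}_{2}^{\pm}$ by the scalar factor $\frac{i\dot{G}^{\pm}_{1j}\eta}{\tau \pm i\dot{v}^{+}_{1}\eta}$ coming from the algebraic solution of the deformation equation (with $\dot{v}_{1}$ evaluated on the appropriate side so that $\dot v_1^- = -\dot v_1^+$ produces the sign swap in the denominators).

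The main obstacle is simply clean sign bookkeeping across the interface, since $\dot{v}_{1}$, $\dot{G}_{1j}$ and the sign in the exponent of $e^{\mp\mu^{\pm}x_{2}}$ all switch between $\Omega^{+}$ and $\Omega^{-}$. There is no analytic subtlety: once the identity $(\mu^{+}+\mu^{-})(\mu^{+}-\mu^{-}) = 4i\dot{v}^{+}_{1}\tau\eta/c^{2}$ is used, the prefactor from \eqref{3.15} collapses to the stated form and the remaining manipulations are purely algebraic. One should also note, for later use in the ill-posedness proof, that $\mathfrak{R}\mu^{\pm}>0$ ensures the exponentials decay away from the interface so that $\hat w_i^\pm$ and $\hat E_{ij}^\pm$ belong to $L^{2}(I_{\pm})$ in $x_2$ whenever $\hat g\in L^2(\mathbb{R})$.
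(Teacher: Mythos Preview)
Your proposal is correct and follows essentially the same route as the paper: Fourier-transform the momentum and deformation equations in \eqref{3.10}, solve the latter algebraically for $\hat E_{j}$ in terms of $\hat w$, substitute back to express $\hat w$ via $\hat m$, and then insert \eqref{3.15}. You are in fact slightly more explicit than the paper, since you spell out the key identity $(\mu^{+}+\mu^{-})(\mu^{+}-\mu^{-})=4i\dot v_{1}^{+}\tau\eta/c^{2}$ that collapses the prefactor of \eqref{3.15} into the $(\mu^{+}-\mu^{-})$ form, whereas the paper's proof simply writes ``Taking use of \eqref{3.15}'' and leaves that simplification implicit.
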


\begin{proof}
    Taking the horizontal Fourier transform of  the second and third equation in \eqref{3.10}, we arrive
	\begin{equation}\label{4.36}
		(\tau+i  \dot{v}_{1} \eta)  \hat{w}_{1}+ c^{2} i \eta  \hat{m}=i\sum_{j=1}^{2} \dot{G}_{1j}\eta \hat{E}_{1j},
	\end{equation}
	and
	\begin{equation}\label{4.37}
		(\tau+i  \dot{v}_{1} \eta)  \hat{w}_{2}+ c^{2} \partial_{2} \hat{m}=i \sum_{j=1}^{2} \dot{G}_{1j}\eta \hat{E}_{2j},
	\end{equation}
	and 
		\begin{equation}\label{4.38}
		(\tau+i  \dot{v}_{1} \eta)  \hat{E}_{1j} =i\dot{G}_{1j} \eta \hat{w}_{1},
	\end{equation}
	and
		\begin{equation}\label{4.39}
		(\tau+i  \dot{v}_{1} \eta)  \hat{E}_{2j} =i\dot{G}_{1j}\eta  \hat{w}_{2},
	\end{equation}
	Then we substitute \eqref{4.38} into \eqref{4.36} to obtain 
	\begin{equation*}\label{4.40}
		(\tau+i  \dot{v}_{1} \eta)^{2}  \hat{w}_{1}+ c^{2}	(\tau+i  \dot{v}_{1} \eta) i \eta  \hat{m}=-((\dot{G}^{+}_{11})^{2}+(\dot{G}^{+}_{12})^{2}) \eta^{2} \hat{w}_{1},
	\end{equation*}
	similarly substituting \eqref{4.39} into \eqref{4.37} to obtain
		\begin{equation*}\label{4.41}
		(\tau+i  \dot{v}_{1} \eta)^{2}  \hat{w}_{2}+ c^{2}	(\tau+i  \dot{v}_{1} \eta) \partial_{2}  \hat{m}=-((\dot{G}^{+}_{11})^{2}+(\dot{G}^{+}_{12})^{2}) \eta^{2} \hat{w}_{2}.
	\end{equation*}
	Taking use of \eqref{3.15}, we obtain \eqref{4.42} and \eqref{4.43}. Futher, we can derive \eqref{4.421} and \eqref{4.431} by \eqref{4.38} and \eqref{4.39}.
	
\end{proof}
We are now in a position to prove  ill-posedness for this linear problem \eqref{3.5}-\eqref{3.6} in the following lemma:
\begin{lemm}\label{lem:4.3}
	In the case of $\sqrt{\frac{(\dot{G}^{+}_{11})^{2}+(\dot{G}^{+}_{12})^{2}}{c^{2}}}+\epsilon_{0}\le M:=\frac{|\dot{v}^{+}_{1}|}{c}\le \sqrt{2+ \frac{(\dot{G}^{+}_{11})^{2}+(\dot{G}^{+}_{12})^{2}}{c^{2}}}-\epsilon_{0}$,	the linear equations \eqref{3.5} with the corresponding jump boundary conditions \eqref{3.6} is ill-posed in the sense of Hadamard in $H^{k}(\Omega)$ for every $k$. More precisely, for any $k, j\in \mathbb{N}$ with $j\geq k$ and for any $T_{0}>0$ and $\alpha>0$, there exists a sequence $\{ (f_{n}, v_{n}, h_{n}), G_{n,l} \}_{n=1}^{\infty}$ with $l=1,2$ to \eqref{3.5}, satisfying  boundary conditions \eqref{3.6}, so that
	\begin{equation}\label{4.15}
		\| (f_{n}(0), h_{n}(0), v_{n}(0) , G_{n,l}(0))\|_{H^{j}} \lesssim \frac{1}{n},
	\end{equation}
	but
	\begin{equation}\label{4.16}
		\|(f_{n}(t),h_{n}(t), v_{n}(t), G_{n,l}(t))\|_{H^{k}}\geq \alpha,~for~all~t\geq T_{0}.
	\end{equation}
\end{lemm}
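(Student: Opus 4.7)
The strategy is to exploit the growing normal modes provided by Lemma \ref{lem:symbol}: under the instability condition the symbol $\Sigma$ has the real positive root $\tau=X_1\eta$, so every frequency $\eta$ supports a solution with temporal growth rate proportional to the spatial wavenumber. The plan is to choose a sequence of initial data localized at frequencies $\eta_n\to\infty$ and to rescale its amplitude so that the initial $H^j$ norm is of order $1/n$; then the factor $e^{X_1\eta_n T_0}$ produced by time evolution will overwhelm any polynomial loss of derivatives and force the $H^k$ norm at time $T_0$ to exceed the prescribed constant $\alpha$.

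For the construction, fix a real bump $\chi\in C_c^\infty(\mathbb{R})$ with $\chi(0)=1$ and $\mathrm{supp}\,\chi\subset(-1,1)$, and set
\begin{equation*}
\hat{g}_n(\eta)=a_n\bigl[\chi(\eta-\eta_n)+\chi(-\eta-\eta_n)\bigr],
\end{equation*}
so that $g_n$ is real-valued. On each Fourier slice I then determine $\hat{m}_n(\eta,x_2)$, $\hat{w}_n(\eta,x_2)$, $\hat{E}_{n,l}(\eta,x_2)$ from $\hat{g}_n(\eta)$ through the explicit formulas \eqref{3.15}, \eqref{4.42}--\eqref{4.431}, evaluated at the frequency-dependent growth rate $\tau(\eta)=X_1|\eta|$; the fully time-evolved solution is defined by $f_n(t,x_1)=\mathcal{F}^{-1}[e^{X_1|\eta|t}\hat{g}_n(\eta)]$, and likewise for $h_n$, $v_n$, $G_{n,l}$. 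Linearity of \eqref{3.5}--\eqref{3.6} and the fact that every Fourier slice of $\hat{g}_n$ activates a legitimate growing mode guarantee that this yields a genuine real-valued solution to the linearized problem.

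For the norm estimates, on $\mathrm{supp}\,\hat{g}_n$ one has $|\eta|\sim\eta_n$; direct inspection of the coefficients in \eqref{4.42}--\eqref{4.431} shows they have modulus of order $\eta_n$ (since $\mu^\pm\sim\eta_n$ by \eqref{3.40}), while the transverse decay $e^{-\mathrm{Re}(\mu^\pm)|x_2|}$ contributes an $L^2(x_2)$ integration of size $\eta_n^{-1/2}$. Combining these gives
\begin{equation*}
\|f_n(t)\|_{H^s(\Gamma)}\sim|a_n|\eta_n^s e^{X_1\eta_n t},\qquad \|(h_n,v_n,G_{n,l})(t)\|_{H^s(\Omega)}\sim|a_n|\eta_n^{s+1/2} e^{X_1\eta_n t},
\end{equation*}
for any $s\geq 0$. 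Choosing $\eta_n=n$ and $a_n=n^{-j-3/2}$ then forces $\|(f_n,h_n,v_n,G_{n,l})(0)\|_{H^j}\lesssim 1/n$, establishing \eqref{4.15}, while the single lower bound $\|f_n(t)\|_{H^k(\Gamma)}\gtrsim e^{X_1 n T_0} n^{k-j-3/2}$, valid for every $t\geq T_0$, exceeds $\alpha$ for all sufficiently large $n$, establishing \eqref{4.16}.

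The main technical obstacle is to justify the uniform two-sided bounds on the coefficients in \eqref{4.42}--\eqref{4.431} at high frequency, so that no cancellation can spoil the lower bound on $\|f_n(t)\|_{H^k(\Gamma)}$. This reduces to checking that the denominators $(\tau\pm i\dot{v}^+_1\eta)^2+((\dot{G}^+_{11})^2+(\dot{G}^+_{12})^2)\eta^2$ remain of order $\eta^2$ when $\tau=X_1\eta$, which is precisely ensured by $X_1\neq 0$ under the instability condition, together with the asymptotic behavior of $\mu^\pm$ derived from \eqref{3.16}. Once these bounds are in hand, the argument closes directly by the choice $\eta_n=n$, exponential always beating polynomial.
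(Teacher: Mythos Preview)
Your proposal is correct and follows essentially the same strategy as the paper: both localize $\hat g_n$ in a frequency window near $\eta\sim n$, invoke the growing mode $\tau=X_1\eta$ from Lemma~\ref{lem:symbol}, and balance the amplitude so that the initial $H^j$ norm is $O(1/n)$ while the factor $e^{X_1 n T_0}$ forces the $H^k$ norm past $\alpha$. The only noteworthy differences are cosmetic: you symmetrize $\hat g_n$ and take $\tau(\eta)=X_1|\eta|$ to make real-valuedness explicit, and you use only the lower bound on $\|f_n(t)\|_{H^k(\Gamma)}$ to obtain \eqref{4.16}, whereas the paper carries out the detailed two-sided coefficient estimates \eqref{4.26}--\eqref{4.31}, \eqref{4.44}--\eqref{4.455}, \eqref{4.3444}--\eqref{5.334} to bound every component $(h_n,v_n,G_{n,l})$ from above and below separately.
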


\begin{proof}
	For any $j\in \mathbb{N}$, we let $\chi_{n}(\eta)\in C_{c}^{\infty}(\mathbb{R})$ be a real-valued function so that  $supp(\chi_{n}) \subset B(0,n+1)\backslash B(0,n)$ and 
	\begin{equation} \label{4.17}
		\int_{\mathbb{R}} (1+ |\eta|^{2})^{j+1}	|\chi_{n}(\eta)|^{2} d \eta= \frac{1}{\bar{C_{j}}^{2} n^{2}},
	\end{equation}
 where $\bar{C_{j}}$ is a constant depend on $j$.  We can choose $\hat{g}_{n}=\chi_{n}(\eta)$ which solves \eqref{4.13}.
	We define 
	\begin{equation} \label{4.18}
		f_{n}(t)= e^{\tau t} g_{n}= \frac{1}{4\pi^2}\int_{\mathbb{R}}e^{X_{1} \eta t} \chi_{n}(\eta)  e^{i \eta x_{1}}d \eta,
	\end{equation}
	which solves \eqref{4.12}. Here we take use of  $\tau=X_{1} \eta$ in according with Lemma \ref{lem:symbol} . From this, we can see that the  front of the linear equation is qualitatively more unstable for large frequencies $\eta$. Since $\eta\rightarrow \infty$, the solutions \eqref{4.12} with  a higher frequency grow faster in time, which provides a mechanism for Kelvin-Helmholtz instability.   By the choice of $\chi_{n}$, we have the estimate
	\begin{equation} \label{4.19}
		\begin{aligned}
			&\|f_{n}(0)\|_{H^{j}(\Gamma)}=\|g_{n}\|_{H^{j}(\Gamma)} \\
			&= (\int_{\mathbb{R}}  (1+ |\eta|^{2})^{j}	|\chi_{n}(\eta)|^{2} d \eta)^{1/2}\lesssim \frac{1}{n}.
		\end{aligned}
	\end{equation}
	Meanwhile  for $ n+1\geq \eta\geq n$ and $t\geq T_{0}$, we get
	\begin{equation} \label{4.20}
		\begin{aligned}
			\|f_{n}(t)\|^{2}_{H^{k}(\Gamma)} &\geq e^{ 2X_{1} n T_{0}} \int_{\mathbb{R}}  (1+ |\eta|^{2})^{k}	|\chi_{n}(\eta)|^{2} d \eta\\
			&\geq  \frac{e^{2X_{1} n T_{0}}}{ (1+(n+1)^{2})^{j-k+1}}  \int_{\mathbb{R}}  (1+\eta^{2})^{j+1}    |\chi_{n}(\eta)|^{2} d\eta.
		\end{aligned}
	\end{equation}
	Let $n$ be sufficiently large so that 
	\begin{equation}\label{4.21}
		\frac{e^{2X_{1} n T_{0}}}{ (1+(n+1)^{2})^{j-k+1}} \geq \alpha ^{2}\bar{C_{j}}^{2} n^{2},
	\end{equation}
	where $\alpha$ is some positive constant independed of $n$.
	Thus we may estimate 
	\begin{equation}\label{4.22}
		\| f_{n}(t)\|_{H^{k}(\Gamma)}\geq \alpha.
	\end{equation}

	From \eqref{3.15}, we know that 
	\begin{equation} \label{4.23}
		\hat{m}_{n}(\eta,x_{2})=\left\{
		\begin{aligned}
			& \frac{4 i \dot{v}^{+}_{1} \eta \tau }{c^{2}(\mu^{+}+\mu^{-})}\hat{g}_{n}(\eta)e^{-\mu^{+}x_2}&x_2\ge 0,\\
			& \frac{4 i \dot{v}^{+}_{1} \eta \tau  }{c^{2}(\mu^{+}+\mu^{-})}\hat{g}_{n}(\eta)e^{\mu^{-}x_2}&x_2<0.
		\end{aligned}
		\right.
	\end{equation}
	Since  $\tau=X_{1}\eta>0$ and $\eta>0$, from lemma 3.1 we know that $\mu^{+}-\mu^{-}\neq 0$, then \eqref{4.23} can be rewritten as
	\begin{equation}\label{4.24}
		\hat{m}_{n}(\eta,x_{2})=\left\{
		\begin{aligned}
			& (\mu^{+}-\mu^{-})\hat{g}_{n}(\eta)e^{-\mu^{+}x_2}&x_2\ge 0,\\
			& (\mu^{+}-\mu^{-})\hat{g}_{n}(\eta)e^{\mu^{-}x_2}&x_2<0,
		\end{aligned}
		\right.
	\end{equation}
	here we note that $\mu^{\pm}$ only depend on $\eta$, since we get $\tau= X_{1}\eta$, therefore it implies that $\mu(\tau,\eta)= \mu(X_{1}\eta,\eta)$.

	Then, by \eqref{3.40} in Lemma 3.2, we deduce that 
	\begin{equation}\label{4.26}
		\big|\frac{\mu^{+}-\mu^{-}}{\mu^{\pm }}\big|^{2}= \frac{|2i\sqrt{\frac{r-a}{2}} |^{2}}{|\sqrt{\frac{r+a}{2}} \pm i\sqrt{\frac{r-a}{2}}|^{2}} =2\frac{r-a}{r}.
	\end{equation}
In order to simplify the computation, we introduce the notation $K=\sqrt{\frac{(\dot{G}^{+}_{11})^{2}+(\dot{G}^{+}_{12})^{2}}{c^{2}}} $ and $\tilde{X}_{1}= \frac{X_{1}}{c}$, thus the condition \eqref{2.28} is rewritten as $K+\epsilon_{0}\le M\le \sqrt{ K^{2}+2}-\epsilon_{0}$ and the equality \eqref{3.34} is rewritten as $\tilde{X}_{1}^{2}=\sqrt{1+ 4(K^{2}+1)M^{2}}- M^{2}-K^{2}-1$. At the same time, we rewrite \eqref{3.41} by
	\begin{equation}\label{4.27}
		\begin{aligned}
		a&= (\tilde{X}^{2}_{1}- M^{2}+K^{2}+ 1) \eta^{2}\\
		&=(\sqrt{1+ 4(K^{2}+1)M^{2}}- 2M^{2})  \eta^{2}.
			\end{aligned}
	\end{equation}
 We estimate $a$ as follows:
	\begin{equation}\label{4.28}
		-\eta^{2}<a\leq  (\sqrt{1+4(K^{2}+1)(K+\epsilon_{0})^{2}}-2(K+\epsilon_{0})^{2})\eta^{2},
	\end{equation}
	where the lower bound in \eqref{4.28} holds when $M=\sqrt{ K^{2}+2}$ in \eqref{4.27}.
	Also we compute
	\begin{equation}\label{4.29}
		\begin{aligned}
			&|\mu^{+}|= |\sqrt{\frac{r+a}{2}} + i\sqrt{\frac{r-a}{2}}| =\sqrt{ r},\\
			&|\mu^{-}|= |\sqrt{\frac{r+a}{2}} - i\sqrt{\frac{r-a}{2}}| = \sqrt{ r},
		\end{aligned}
	\end{equation}
	In according with \eqref{3.34} and \eqref{3.41}, it implies that 
	\begin{equation}\label{4.30}
			r^{2}=a^{2}+b^{2}=\eta^{4}.
	\end{equation}
	Finally, combining with \eqref{4.26}, \eqref{4.27}, \eqref{4.28} and  \eqref{4.30} implies that 
	\begin{equation}\label{4.31}
	C_{1}:=2-2(\sqrt{1+4(K^{2}+1)(K+\epsilon_{0})^{2}}-2(K+\epsilon_{0})^{2})\leq 	|\frac{\mu^{+}-\mu^{-}}{\mu^{\pm }}|^{2} <4, 
	\end{equation}
	here we remark that  $\sqrt{\frac{(\dot{G}^{+}_{11})^{2}+(\dot{G}^{+}_{12})^{2}}{c^{2}}}+\epsilon_{0} \leq M$ must be satisfied,  where $\epsilon_{0}$ is a small but fixed number. Because if March number $M$ tends
	 to $\sqrt{\frac{(\dot{G}^{+}_{11})^{2}+(\dot{G}^{+}_{12})^{2}}{c^{2}}}$, this lower bound of \eqref{4.31} would tend to zero.\\
	At the same time, we need to estimate the term $|\frac{\mu^{\pm}}{\mu^{+}+\mu^{-}}|$, by \eqref{4.27}, \eqref{4.29} and \eqref{4.30}, after the direct computation, we can obtain 
	\begin{equation*}
		\left|\frac{\mu^{\pm}}{\mu^{+}+\mu^{-}}\right|=\frac{1}{\sqrt{2}\sqrt{\sqrt{1+4(K^2+1)M^2}+1-2M^2}}
	\end{equation*}
	Then, since $K+\epsilon_{0}\le M\le \sqrt{ K^{2}+2}-\epsilon_{0}$, it is easy to verified that 
	\begin{equation}\label{upbound}
	 \frac{1}{2}<\left|\frac{\mu^{\pm}}{\mu^{+}+\mu^{-}}\right|\le C_{*},
	\end{equation}
	where $C_{*}$ is defined by 
	$$C_{*}:=\frac{1}{\sqrt{2}}\frac{1}{\sqrt{1+\sqrt{(2K^2+3)^2-4(K^2+1)\epsilon_{0}(2\sqrt{K^2+2}-\epsilon_{0})}-2(\sqrt{K^2+2}-\epsilon_{0})^2}}.$$
	We remark that when $\epsilon_0$ goes to $0$, $C_{*}$ would go to infinity, which would not obtain the uniform upper bound.\\

Therefore employing \eqref{4.31}, \eqref{4.24} and \eqref{4.29},    we estimate $\| h_{n}(0)\|_{H^{j}(\Omega)}$ as follows
	\begin{equation}\label{4.32}
		\begin{aligned}
			&\| h_{n}(t=0)\|^{2}_{H^{j}(\Omega)}= \|  m_{n}\|^{2}_{H^{j}(\Omega)}\\
			&\leq   \sum_{s=0}^{j} \int_{\mathbb{R}}  (1+\eta^{2})^{j-s}|\mu^{+}-\mu^{-}|^{2}| \hat{g}_{n}(\eta)|^{2} \int_{0}^{\infty}|\partial_{2}^{s} e^{-\mu^{+}x_2}|^{2} dx_{2} d \eta \\
			&+   \sum_{s=0}^{j}\int_{\mathbb{R}}  (1+\eta^{2})^{j-s} |\mu^{+}-\mu^{-}|^{2} | \hat{g}_{n}(\eta)|^{2} \int_{-\infty}^{0}|\partial_{2}^{s}e^{\mu^{-}x_2}|^{2} dx_{2}  d \eta\\
				&\leq   \sum_{s=0}^{j} \int_{\mathbb{R}}  (1+\eta^{2})^{j-s}|\mu^{+}-\mu^{-}|^{2}| \hat{g}_{n}(\eta)|^{2} \int_{0}^{\infty}|\mu^+|^{2s} e^{-2\sqrt{\frac{r+a}{2}}x_2} dx_{2} d \eta \\
			&+   \sum_{s=0}^{j}\int_{\mathbb{R}}  (1+\eta^{2})^{j-s} |\mu^{+}-\mu^{-}|^{2} | \hat{g}_{n}(\eta)|^{2} \int_{-\infty}^{0}|\mu^-|^{2s} e^{2\sqrt{\frac{r+a}{2}}x_2} dx_{2}  d \eta \\
			& \leq  2C_{*} \sum_{s=0}^{j}\int_{\mathbb{R}}  (1+\eta^{2})^{j-s} \left|\frac{\mu^{+}-\mu^{-}}{\mu^{+ }}\right|^{2} |\mu^{+}|^{2s+1} 	\left|\frac{\mu^{+}}{\mu^{+}+\mu^{-}}\right||\chi_{n}(\eta)|^{2} d\eta\\
			&+ 2C_{*} \sum_{s=0}^{j} \int_{\mathbb{R}}  (1+\eta^{2})^{j-s}   \left|\frac{\mu^{+}-\mu^{-}}{\mu^{-}}\right|^{2} |\mu^{-}|^{2s+1}	\left|\frac{\mu^{-}}{\mu^{+}+\mu^{-}}\right| |\chi_{n}(\eta)|^{2} d \eta\\
			& < 4C_{*}(j+1) \int_{\mathbb{R}}  (1+\eta^{2})^{j+1}  |\chi_{n}(\eta)|^{2} d\eta  \lesssim \frac{1}{n^2}.
		\end{aligned}
	\end{equation}
	While for the lower bound of $\| h_{n}(t)\|^{2}_{H^{k}(\Omega)}$, by the definition of the $H^k$ norm and \eqref{4.23}, noting that $\mu^{+}= \sqrt{\frac{r+a}{2}} + i\sqrt{\frac{r-a}{2}}$, $\mu^{-}= \sqrt{\frac{r+a}{2}} - i\sqrt{\frac{r-a}{2}}$ and $supp(\chi_{n}) \subset B(0,n+1)\backslash B(0,n)$, by \eqref{4.29} and \eqref{4.30}, we have
	\begin{equation}\label{4.25}
		\begin{aligned}
			&\| h_{n}(t)\|^{2}_{H^{k}(\Omega)}= \| e^{\tau t} m_{n}\|^{2}_{H^{k}(\Omega)}\\
			&\geq    \int_{\mathbb{R}}  (1+\eta^{2})^{k}|\mu^{+}-\mu^{-}|^{2}|e^{\tau t} \hat{g}_{n}(\eta)|^{2} \int_{0}^{\infty}|e^{-\mu^{+}x_2}|^2 dx_{2} d \eta \\
			&+  \int_{\mathbb{R}}  (1+\eta^{2})^{k} |\mu^{+}-\mu^{-}|^{2} |e^{\tau t} \hat{g}_{n}(\eta)|^{2} \int_{-\infty}^{0}|e^{\mu^{-}x_2}|^2 dx_{2}  d \eta \\
			&\geq   \int_{\mathbb{R}}  (1+\eta^{2})^{k}\frac{|\mu^{+}-\mu^{-}|^{2}}{|\mu^{+}|^2}e^{2\tau t}| \hat{g}_{n}(\eta)|^{2} 	\left|\frac{\mu^{+}}{\mu^{+}+\mu^{-}}\right||\mu^{+}| d \eta \\
			 &+\int_{\mathbb{R}}  (1+\eta^{2})^{k}\frac{|\mu^{+}-\mu^{-}|^{2}}{|\mu^{-}|^2}e^{2\tau t}| \hat{g}_{n}(\eta)|^{2} 	\left|\frac{\mu^{-}}{\mu^{+}+\mu^{-}}\right||\mu^{-}| d \eta \\
			&\geq  \int_{\mathbb{R}}  (1+\eta^{2})^{k}|\frac{\mu^{+}-\mu^{-}}{\mu^{+ }}|^{2} e^{2 X_{1} \eta t}\frac{|\mu^{+}|}{|\mu^{+}|+|\mu^{-}|} |\mu^{+}||\chi_{n}(\eta)|^{2} d \eta \\
			&+\int_{\mathbb{R}}  (1+\eta^{2})^{k}|\frac{\mu^{+}-\mu^{-}}{\mu^{-}}|^{2} e^{2 X_{1} \eta t}\frac{|\mu^{-}|}{|\mu^{+}|+|\mu^{-}|} |\mu^{-}||\chi_{n}(\eta)|^{2} d \eta \\
		& \geq  C_{1}\int_{\mathbb{R}}  (1+\eta^{2})^{k}  e^{2 X_{1} \eta t} |\chi_{n}(\eta)|^{2} d\eta,
		\end{aligned}
	\end{equation}
	where we use the triangle inequality $|\mu^{+}+\mu^{-}|\le |\mu^{+}|+|\mu^{-}|$ in the second to the last inequality above
	. 
	Meanwhile  for $\eta\geq n \geq 1$ and $t\geq T_{0}$, we may estimate \eqref{4.25} as follows
	\begin{equation}\label{4.33}
		\begin{aligned}
			\| h_{n}(t)\|^{2}_{H^{k}(\Omega)}& \geq  C_{1}\frac{e^{2 X_{1}n T_{0}}}{ 1+(n+1)^{j-k+1}}   \int_{\mathbb{R}}  (1+\eta^{2})^{j+1}    |\chi_{n}(\eta)|^{2} d\eta,
		\end{aligned}
	\end{equation}
	Let $n$ be sufficiently large so that 
	\begin{equation}\label{4.34}
		C_{1}\frac{e^{2 X_{1}n T_{0}}}{ 1+(n+1)^{j-k+1}} \geq \alpha^{2} n^{2} \bar{C}_{j}^{2},
	\end{equation}
	where $\alpha$ is some positive constant independent of $n$.
	Hence we may estimate 
	\begin{equation}\label{4.35}
		\| h_{n}(t)\|_{H^{k}(\Omega)}\geq \alpha.
	\end{equation}
	
	To estimate $\|v_n(0)\|_{H^j(\Omega)}$ and $\|v_n(t)\|_{H^k(\Omega)}$,
	 we need to estimate  $|\frac{i\eta(\tau\pm i\dot{v}^{+}_{1} \eta )}{ (\tau\pm i  \dot{v}^{+}_{1} \eta)^{2}+ ((\dot{G}^{+}_{11})^{2}+(\dot{G}^{+}_{12})^{2}) \eta^{2}} |$ and $|   \frac{\mu^{\pm }(\tau\pm i\dot{v}^{+}_{1} \eta )}{ (\tau\pm i  \dot{v}^{+}_{1} \eta)^{2}+ ((\dot{G}^{+}_{11})^{2}+(\dot{G}^{+}_{12})^{2}) \eta^{2}}|$ as follows:
	\begin{equation}\label{4.44}
		\begin{aligned}
			&\left|\frac{i\eta(\tau\pm i\dot{v}^{+}_{1} \eta )}{ (\tau\pm i  \dot{v}^{+}_{1} \eta)^{2}+ ((\dot{G}^{+}_{11})^{2}+(\dot{G}^{+}_{12})^{2}) \eta^{2}} \right|^{2}\\
			&= \frac{X^{2}_{1} + (\dot{v}^{+}_{1})^{2}}{(X^{2}_{1} + (\dot{v}^{+}_{1})^{2})^{2}+ 2((\dot{G}^{+}_{11})^{2}+(\dot{G}^{+}_{12})^{2})(X^{2}_{1} - (\dot{v}^{+}_{1})^{2}) + ((\dot{G}^{+}_{11})^{2}+(\dot{G}^{+}_{12})^{2})^{2}}\\
			&= \frac{\tilde{X}^{2}_{1} + M^{2}}{c^{2}[(\tilde{X}^{2}_{1} + M^{2})^{2}+ 2K^{2}(\tilde{X}^{2}_{1} - M^{2}) + K^{4}]} \\
			&= \frac{\sqrt{1+4(K^{2}+1) M^{2}} -K^{2}-1}{c^{2}[(\sqrt{1+4(K^{2}+1) M^{2}} -K^{2}-1)^{2}+ 2K^{2}(\sqrt{1+4(K^{2}+1) M^{2}}-2M^{2} -K^{2}-1) + K^{4}]}, 
		\end{aligned}
	\end{equation}
 Let $z=\sqrt{1+4(K^{2}+1) M^{2}}$, the last equality in \eqref{4.44} can be written a function of $z$ as following
 \begin{equation}\label{function1}
     \frac{1}{c^2}\frac{(K^2+1)(z-K^2-1)}{z^2-2(K^2+1)z+2K^2+1}.
 \end{equation}
	Since  $K+ \epsilon_{0}\le M\le \sqrt{ K^{2}+2}-\epsilon_{0}$, then $\sqrt{1+4(K^{2}+1) (K+ \epsilon_{0})^{2}}\le z<2K^2+3$, the function in \eqref{function1} is monotone decreasing when $\sqrt{1+4(K^{2}+1) (K+ \epsilon_{0})^{2}}\le z<2K^2+3$, thus we know that 
	\begin{equation}\label{4.45}
	\frac{\sqrt{K^2+2}}{2c}<\left|\frac{i\eta(\tau\pm  i\dot{v}^{+}_{1} \eta )}{ (\tau\pm i  \dot{v}^{+}_{1} \eta)^{2}+ ((\dot{G}^{+}_{11})^{2}+(\dot{G}^{+}_{12})^{2}) \eta^{2}}  \right|\leq \frac{C_{2}}{c
	},
	\end{equation}
	where $C_{2}= \sqrt{\frac{\sqrt{1+4(K^{2}+1) (K+ \epsilon_{0})^{2}} -K^{2}-1}{c^{2}[(\sqrt{1+4(K^{2}+1) (K+ \epsilon_{0})^{2}} -K^{2}-1)^{2}+ 2K^{2}(\sqrt{1+4(K^{2}+1) (K+ \epsilon_{0})^{2}}-2(K+ \epsilon_{0})^{2} -K^{2}-1) + K^{4}]}}$ and the lower bound holds when $z=2K^2+3$ in the above inequality \eqref{function1}. Similarly we also get an estimate as follows:
 \begin{equation}\label{4.455}
			\frac{\sqrt{K^2+2}}{2c}<	|\frac{i\eta(\tau\pm  i\dot{v}^{+}_{1} \eta )}{ (\tau\pm i  \dot{v}^{+}_{1} \eta)^{2}+ ((\dot{G}^{+}_{11})^{2}+(\dot{G}^{+}_{12})^{2}) \eta^{2}}  |\leq \frac{C_{2}}{c
		},.
	\end{equation}

	Therefore employing \eqref{4.45}, \eqref{3.9}, \eqref{4.42} and \eqref{upbound}, noting that $\mu^{+}= \sqrt{\frac{r+a}{2}} + i\sqrt{\frac{r-a}{2}}$, $\mu^{-}= \sqrt{\frac{r+a}{2}} - i\sqrt{\frac{r-a}{2}}$, we deduce 
	\begin{equation} \label{4.47}
		\begin{aligned}
			&\|v_{n,1}(0)\|^{2}_{H^{j}(\Omega)}= \|  w_{n,1}\|^{2}_{H^{j}(\Omega)}\\
			&\leq   \sum_{s=0}^{j} \int_{\mathbb{R}}  (1+\eta^{2})^{j-s}\left|\frac{ (\mu^{+}-\mu^{-})c^{2} i \eta (\tau+ i\dot{v}^{+}_{1} \eta) }{ (\tau+i  \dot{v}^{+}_{1} \eta)^{2}+ ((\dot{G}^{+}_{11})^{2}+(\dot{G}^{+}_{12})^{2}) \eta^{2} }\right|^{2}|\hat{g}_{n}(\eta)|^{2} \int_{0}^{\infty}|\partial_{2}^{s} e^{-\mu^{+}x_2}|^{2} dx_{2} d \eta \\
			&+  \sum_{s=0}^{j} \int_{\mathbb{R}}  (1+\eta^{2})^{j-s} \left|\frac{ (\mu^{+}-\mu^{-})c^{2} i \eta (\tau- i\dot{v}^{+}_{1} \eta) }{ (\tau-i  \dot{v}^{+}_{1} \eta)^{2}+ ((\dot{G}^{+}_{11})^{2}+(\dot{G}^{+}_{12})^{2}) \eta^{2} }\right|^{2}| \hat{g}_{n}(\eta)|^{2} \int_{-\infty}^{0}|\partial_{2}^{s}e^{\mu^{-}x_2}|^{2} dx_{2}  d \eta \\
			& \leq c^2 C^2_{2}  \sum_{s=0}^{j}\int_{\mathbb{R}}  (1+\eta^{2})^{j-s} |\frac{\mu^{+}-\mu^{-}}{\mu^{+ }}|^{2} |\mu^{+}|^{2s+1} \left|\frac{\mu^{+}}{\mu^{+}+\mu^{-}}\right|  |\chi_{n}(\eta)|^{2} d\eta\\
    			&+ c^2 C^2_{2}  \sum_{s=0}^{j}\int_{\mathbb{R}}  (1+\eta^{2})^{j-s}   |\frac{\mu^{+}-\mu^{-}}{\mu^{-}}|^{2} |\mu^{-}|^{2s+1}  \left|\frac{\mu^{-
    			}}{\mu^{+}+\mu^{-}}\right| |\chi_{n}(\eta)|^{2} d \eta\\
			& < 8c^2C_{*}C^2_{2}(j+1)\int_{\mathbb{R}}  (1+\eta^{2})^{j+1}  |\chi_{n}(\eta)|^{2} d\eta  \lesssim  \frac{1}{ n^{2}}.
		\end{aligned}
	\end{equation}
	Computing similarly as above, then we have 
	\begin{equation}\label{4.48}
		\|v_{n,2}(0)\|^{2}_{H^{j}(\Omega)} \lesssim  \frac{1}{ n^{2}}.
	\end{equation}
	Noting that $supp(\chi_{n}) \subset B(0,n+1)\backslash B(0,n)$, by \eqref{4.29}-\eqref{4.31} and \eqref{4.45}, \eqref{upbound}, we  deduce
	\begin{equation} \label{4.49}
		\begin{aligned}
			&\|v_{n,1}(t)\|_{H^{k}(\Omega)}^{2}=\|e^{X_{1} \eta t}   w_{n,1}\|_{H^{k}(\Omega)}^{2} \\
			&\geq  \int_{\mathbb{R}}  (1+\eta^{2})^{k} \left|\frac{ (\mu^{+}-\mu^{-})c^{2} i \eta (\tau+ i\dot{v}^{+}_{1} \eta) }{ (\tau+i  \dot{v}^{+}_{1} \eta)^{2}+ ((\dot{G}^{+}_{11})^{2}+(\dot{G}^{+}_{12})^{2}) \eta^{2} }\right|^{2} e^{2 X_{1} \eta t}  |\hat{g}_{n}|^{2}\int_{0}^{\infty}|e^{-\mu^{+}x_2}|^2 dx_{2} d \eta\\
			&+\int_{\mathbb{R}}  (1+\eta^{2})^{k} \left|\frac{ (\mu^{+}-\mu^{-})c^{2} i \eta (\tau- i\dot{v}^{+}_{1} \eta) }{ (\tau-i  \dot{v}^{+}_{1} \eta)^{2}+ ((\dot{G}^{+}_{11})^{2}+(\dot{G}^{+}_{12})^{2}) \eta^{2} }\right|^{2}  e^{2 X_{1} \eta t}  |\hat{g}_{n}|^{2}\int_{-\infty}^{0}|e^{\mu^{-}x_2}|^2 dx_{2} d \eta\\
			&\geq  \frac{(K^2+2)c^2}{4}\int_{\mathbb{R}}  (1+\eta^{2})^{k} |\frac{ \mu^{+}-\mu^{-}}{\mu^{+}}|^{2}  e^{2 X_{1} \eta t} |\chi_{n}(\eta)|^{2}\left|\frac{\mu^+}{\mu^{+}+\mu^{-}}\right| |\mu^{+}|d \eta \\
			&+ \frac{(K^2+2)c^2}{4}\int_{\mathbb{R}}  (1+\eta^{2})^{k} |\frac{ \mu^{+}-\mu^{-}}{\mu^{-}}|^{2}  e^{2 X_{1} \eta t} |\chi_{n}(\eta)|^{2}\left|\frac{\mu^-}{\mu^{+}+\mu^{-}}\right| |\mu^{-}|d \eta\\
			&\geq \frac{(K^2+2)c^2C_{1}}{8}\frac{e^{2 X_{1}n T_{0}}}{ 1+(n+1)^{j-k+1}}   \int_{\mathbb{R}}  (1+\eta^{2})^{j+1}    |\chi_{n}(\eta)|^{2} d\eta,
		\end{aligned}
	\end{equation}
	where $c$ is the sound speed defined in \eqref{1.2}.
	Let $n$ be sufficiently large so that 
	\begin{equation}\label{4.50}
 \frac{(K^2+2)c^2C_{1}}{8}\frac{e^{2 X_{1}n T_{0}}}{ 1+(n+1)^{j-k+1}} \geq \alpha^{2} n^2 \bar{C}_{j}^{2},
	\end{equation}
	where $\alpha$ is some positive constant independent of $n$.
	Hence  we may estimate 
	\begin{equation}\label{4.51}
		\| v_{n,1}(t)\|_{H^{k}(\Omega)}\geq \alpha.
	\end{equation}
	Similarly we have 
	\begin{equation} \label{4.52}
		\|v_{n,2}\|_{H^{k}(\Omega)}\geq  \alpha.
	\end{equation}
 To estimate terms of $\|G_{n,l}(t,x_{1},x_{2})\|^{2}_{H^{j}}$,  we 	need to  rewrite  $|\frac{i\eta}{\tau\pm i\dot{v}^{+}_{1} \eta } |$ and $|\frac{\mu^{\pm}}{\tau\pm i\dot{v}^{+}_{1} \eta}|$ as follows: 
	\begin{equation}\label{Gesima}
		\begin{aligned}
			\left|\frac{i\eta}{\tau\pm i\dot{v}^{+}_{1} \eta )} \right|^{2}= \frac{1}{X^{2}_{1} + (\dot{v}^{+}_{1})^{2} } = \frac{1}{c^{2}  (\sqrt{1+ 4(K^{2}+1)M^{2}}-K^{2}-1)}. 
		\end{aligned}
	\end{equation}
	Since  $K+ \epsilon_{0}\le M\le \sqrt{ K^{2}+2}-\epsilon_{0}$, we know by \eqref{Gesima} that 
	\begin{equation}\label{4.3444}
		\begin{aligned}
			\frac{1}{c^{2}(K^{2}+2)} < 	\left|\frac{i\eta}{\tau\pm  i\dot{v}^{+}_{1} \eta )} \right|^{2} \leq  C_{3},
		\end{aligned}
	\end{equation}
	where $C_{3}= \frac{1}{c^{2}  (\sqrt{1+ 4(K^{2}+1)(K+ \epsilon_{0})^{2}}-K^{2}-1)} $.
	
	The estimates \eqref{4.29} and \eqref{4.3444} then imply that 
	\begin{equation}\label{5.334}
		\frac{1}{c^{2}(K^{2}+2)} < 	\left|\frac{\mu^{+}}{\tau\pm  i\dot{v}^{+}_{1} \eta} \right|^{2} = \frac{1}{X_{1}^{2}+ (\dot{v}^{+}_{1})^{2} }\leq C_{3}.
	\end{equation}
  Then, by \eqref{4.421}, making use of \eqref{4.3444} and \eqref{4.45}, we get with $l=1,2$
	\begin{equation} \label{4.366}
		\begin{aligned}
			&\quad\|G_{n,1l}(t=0,x_{1},x_{2})\|^{2}_{H^{j}(\Omega)}= \|  E_{n,1l}(x_{1},x_{2})\|^{2}_{H^{j}(\Omega)}\\
			&\leq   \sum_{s=0}^{j} \int_{\mathbb{R}}  (1+\eta^{2})^{j-s}\left|\frac{ i\dot{G}^+_{1l}\eta (\mu^{+}-\mu^{-})c^{2} i \eta (\tau+ i\dot{v}^{+}_{1} \eta) }{(\tau+i\dot{v}^{+}_{1}\eta) [(\tau+i  \dot{v}^{+}_{1} \eta)^{2}+ ((\dot{G}^{+}_{11})^{2}+(\dot{G}^{+}_{12})^{2}) \eta^{2} ]}\right|^{2}|\hat{g}_{n}(\eta)|^{2} \int_{0}^{\infty}|\partial_{2}^{s} e^{-\mu^{+}x_2}|^{2} dx_{2} d \eta \\
			&+  \sum_{s=0}^{j} \int_{\mathbb{R}}  (1+\eta^{2})^{j-s} \left|\frac{  i\dot{G}^-_{1l}\eta (\mu^{+}-\mu^{-})c^{2} i \eta (\tau- i\dot{v}^{+}_{1} \eta) }{(\tau-i\dot{v}^{+}_{1}\eta)[(\tau-i  \dot{v}^{+}_{1} \eta)^{2}+ ((\dot{G}^{+}_{11})^{2}+(\dot{G}^{+}_{12})^{2}) \eta^{2}] }\right|^2 |\hat{g}_{n}(\eta)|^{2}\int_{-\infty}^{0}|\partial_{2}^{s}e^{\mu^{-}x_2}|^{2} dx_{2}  d \eta\\
			&\leq  c^2 C_{3}C_2^2|\dot{G}^+_{1l}|^2\sum_{s=0}^{j} \int_{\mathbb{R}}  (1+\eta^{2})^{j-s}\left|\frac{ \mu^{+}-\mu^{-}}{\mu^+}\right|^{2}|\hat{g}_{n}(\eta)|^{2}|\mu^+|^{2s+2} \int_{0}^{\infty} e^{-2\sqrt{\frac{r+a}{2}}x_2} dx_{2} d \eta \\
			&+c^2 C_{3}C_2^2|\dot{G}^-_{1l}|^2\sum_{s=0}^{j} \int_{\mathbb{R}}  (1+\eta^{2})^{j-s}\left|\frac{ \mu^{+}-\mu^{-}}{\mu^-}\right|^{2}|\hat{g}_{n}(\eta)|^{2}|\mu^-|^{2s+2} \int_{-\infty}^{0} e^{2\sqrt{\frac{r+a}{2}}x_2} dx_{2} d \eta\\
			&\lesssim c^2C_{*}C_3C_2^2(j+1) \int_{\mathbb{R}}  (1+\eta^{2})^{j+1}  |\chi_{n}(\eta)|^{2} d\eta  \lesssim  \frac{1}{ n^{2}}.
			\end{aligned}
		\end{equation}

	Similarly,  by \eqref{4.431}, we have 
\begin{equation}\label{4.48}
\|G_{n,2l}(t=0,x_{1},x_{2})\|^{2}_{H^{j}(\Omega)} \lesssim  \frac{1}{ n^{2}}.
\end{equation}
Whereas for $n+1\geq \eta\geq n$ and $t\geq T_{0}$, similar process with \eqref{4.49}, we deduce
	\begin{equation} \label{4.377}
		\begin{aligned}
			&\|G_{n,1l}(t)\|_{H^{k}(\Omega)}^{2}= \| e^{X_{1}\eta t} E_{n,1l}(x_{1},x_{2})\|^{2}_{H^{k}(\Omega)}\\  &>  	\frac{(|\dot{G}^+_{1l}|^2+|\dot{G}^-_{1l}|^2) C_{1}}{8}\frac{e^{2 X_{1}n T_{0}}}{ 1+(n+1)^{j-k+1}}   \int_{\mathbb{R}}  (1+\eta^{2})^{j+1}    |\chi_{n}(\eta)|^{2} d\eta.  
		\end{aligned}
	\end{equation}
As previous, we let $n$ be sufficiently large so that 
		\begin{equation} \label{4.377}
\|G_{n,1l}(t)\|_{H^{k}(\Omega)} \geq \alpha.
	\end{equation}
Also we have 
	\begin{equation} \label{4.388}
\|G_{n,2l}(t)\|_{H^{k}(\Omega)} \geq \alpha.
	\end{equation}
		
	Collecting the estimates \eqref{4.19}, \eqref{4.32} and \eqref{4.47} gives 
	\begin{equation}\label{4.53}
		\|f_{n}(0)\|_{H^{j}(\Gamma)}+\|h_{n}(0)\|_{H^{j}(\Gamma)}+ \|v_{n}(0)\|_{H^{j}(\Omega)}+ \|G_{n,l}(0)\|_{H^{j}(\Omega)}\lesssim \frac{1}{n},
	\end{equation}
	but the estimates \eqref{4.22}, \eqref{4.35} \eqref{4.51} and \eqref{4.52} yield
	\begin{equation}\label{4.54}
		\|f_{n}\|_{H^{k}(\Gamma)}+ \|h_{n}\|_{H^{k}(\Omega)}+ \| v_{n}\|_{H^{k}(\Omega)}+\| G_{n,l}\|_{H^{k}(\Omega)} \geq \alpha,~for~all~t\geq T_{0},
	\end{equation}
	which completes the proof of Lemma \ref{4.3}.
\end{proof}

\section{Ill-posedness  for the nonlinear problem}
\quad \quad  Now  we will prove nonlinear ill-posedness for the nonlinear problem \eqref{2.8}.  To begin with, we rewrite the nonlinear system \eqref{2.8} in a perturbation form around the steady state.  Let
\begin{equation}\label{5.1}
	\begin{aligned}
		f=0+ \tilde{f},~v= \dot{v}+ \tilde{v},~h=\dot{h}+ \tilde{h}, \\
		\Psi= Id+ \tilde{\Psi},\varrho=\dot{\varrho}+ \tilde{\varrho},~\psi= 0+ \tilde{\psi},\\
		G_{j}=\dot{G}_{j} + \tilde{G}_{j},~ n=e_{2}+ \tilde{n},~     A=I-B,
	\end{aligned}
\end{equation}
where $	G_{j}$ means the $jth$ column of the $2\times2$ matrix $(G_{ij})$ with $i,j=1,2$, i.e. $G_j=(G_{1j}, G_{2j})^T$.  $\psi$ and $\Psi$ are defined by \eqref{2.2} and \eqref{2.5}, respectively. Meanwhile, $B$ can be represented as following
\begin{equation}\label{5.2}
	B= \sum_{n=1}^{\infty} (-1)^{n-1} (D\tilde{\Psi} )^{n}.
\end{equation}
We can rewrite the term $\breve{v}$ as follows
\begin{equation}\label{5.3}
	\begin{aligned}
		\breve{v}&= (I-B)(\dot{v}+ \tilde{v})- (0, \frac{\partial_{t}\tilde{\psi}}{1+ \partial_{2} \tilde{\psi}})\\
		&= \dot{v}+ \tilde{v}- B(\dot{v}+ \tilde{v})- (0, \frac{\partial_{t}\tilde{\psi}}{1+ \partial_{2} \tilde{\psi}}):= \dot{v}+M,
	\end{aligned}
\end{equation}
where the $M$ is defined as follows
\begin{equation}\label{5.4}
	\begin{aligned}
		M= \tilde{v}- B(\dot{v}+ \tilde{v})- (0, \frac{\partial_{t}\tilde{\psi}}{1+ \partial_{2} \tilde{\psi}}).
	\end{aligned}
\end{equation}

Similarly we rewrite the term $\breve{G}_{j}$ as follows
\begin{equation}\label{5.3}
	\begin{aligned}
		\breve{G}_{j}&= (I-B)(\dot{G}_{j}+ \tilde{G}_{j})\\
		&=\dot{G}_{j}+ \tilde{G}_{j}- B(\dot{G}_{j}+ \tilde{G}_{j}):= \dot{G}_{j}+N,
	\end{aligned}
\end{equation}
where the $N$ is defined as follows
\begin{equation}\label{5.4}
	\begin{aligned}
		N= \tilde{G}_{j}- B(\dot{G}_{j}+ \tilde{G}_{j}).
	\end{aligned}
\end{equation}

To linearize  the term $ c^{2}(h)= c^{2}(\dot{h}+ \tilde{h})$,  we employ Taylor formula to get
\begin{equation}\label{5.5}
	c^{2}(\dot{h}+ \tilde{h})= c^{2}(\dot{h}) + \mathcal{R},
\end{equation}
where the remainder term is defined by
\begin{equation}\label{5.6}
	\begin{aligned}
		\mathcal{R}=  (c^{2})^{\prime}(\dot{h}+ (1-\alpha)\tilde{h}) \tilde{h}, ~0<\alpha<1.
	\end{aligned}
\end{equation}
For the term $v\cdot n $ and $G_{j}\cdot n$,  we can rewrite it as
\begin{equation}\label{5.7}
	v\cdot n= (\dot{v}+ \tilde{v})\cdot (e_{2}+ \tilde{n})
	=\tilde{v}_{2}-\dot{v}_{1} \partial_{1} \tilde{f}+\tilde{v} \cdot \tilde{n},
\end{equation}
and
\begin{equation}\label{5.7}
	G_{j}\cdot n= (\dot{G}_{j}+ \tilde{G}_{j})\cdot (e_{2}+ \tilde{n})
	=\tilde{G}_{2j}-\dot{G}_{1j} \partial_{1} \tilde{f}+\tilde{G}_{j} \cdot \tilde{n}.
\end{equation}

Then the nonlinear system \eqref{2.8}  can be rewritten a linear system  of the perturbation terms $(\tilde{h}, \tilde{v}, \tilde{f}, \tilde{G}_{j})$ as following 
\begin{equation}\label{5.8}
	\begin{cases}
		\partial_{t}\tilde{h}+(\dot{v} \cdot \nabla) \tilde{h}+ \nabla \cdot \tilde{v}=-(M \cdot \nabla) \tilde{h}+B^{T} \nabla \cdot \tilde{v} & \text {in}~ \Omega, \\
		\partial_{t} \tilde{v}+(\dot{v} \cdot \nabla) \tilde{v}
		+ c^{2}(\dot{h})\nabla \tilde{h}-\sum_{j=1}^{2} (  \dot{G}_{j}\cdot \nabla )\tilde{G}_{j}=-(M \cdot \nabla) \tilde{v}\\
		+\sum_{j=1}^{2} (  N\cdot \nabla )\tilde{G}_{j}+c^{2}(\dot{h})B^{T}\nabla \tilde{h}- \mathcal{R}(\nabla \tilde{h}- B^{T}\nabla \tilde{h}) & \text {in}~ \Omega, \\
		\partial_{t} \tilde{G}_{j} +(\dot{v} \cdot \nabla) \tilde{G}_{j}-(\dot{G}_{j}\cdot \nabla )\tilde{v}=-(M \cdot \nabla) \tilde{G}_{j}+ (N\cdot \nabla )\tilde{v}   & \text { on }~ \Gamma,\\
		\partial_{t} \tilde{f}+\dot{v}_{1} \partial_{1} \tilde{f}-\tilde{v}_{2} =\tilde{v} \cdot \tilde{n}   & \text { on } \Gamma.
	\end{cases}
\end{equation}
The jump conditions take new form in terms of $\tilde{h}, \tilde{v}, \tilde{G}_{j}$
\begin{equation}\label{5.9}
	\begin{cases}
		\left(\tilde{v}^{+}-\tilde{v}^{-}\right) \cdot e_{2}+(\dot{v}^{+}-\dot{v}^{-})\cdot \tilde{n} =-(\tilde{v}^{+}-\tilde{v}^{-})\cdot \tilde{n}  & \text { on } \Gamma, \\
		\tilde{G}_{j}^{+} \cdot e_{2}+\dot{G}_{j}^{+}\cdot \tilde{n} +\tilde{G}_{j}^{+}\cdot \tilde{n}=0, ~\tilde{G}_{j}^{-} \cdot e_{2}+\dot{G}_{j}^{-}\cdot \tilde{n} +\tilde{G}_{j}^{-}\cdot \tilde{n}=0 & \text { on } \Gamma,\\
		\dot{h}^{+}+  \tilde{h}^{+}=\dot{h}^{-}+  \tilde{h}^{-} & \text { on } \Gamma.
	\end{cases}
\end{equation}

\underline{\textbf{Proof of Theorem \ref{theorem:main}}}\quad 
Now we are ready to prove the main Theorem 2.3.   We prove it by the method of  contradiction. Suppose that the system \eqref{2.8}  is locally well-posedness  for some $k \geq 3$. Let $\delta, t_{0}, C>0$ be the constants provided by Definition 2.2. For $\varepsilon>0$, let $(f^{\varepsilon},h^{\varepsilon},v^{\varepsilon},G^{\varepsilon}_{j})(t)$ with initial data $(f^{\varepsilon},h^{\varepsilon},v^{\varepsilon},G^{\varepsilon}_{j})|_{t=0}=(f^{\varepsilon}_{0},h^{\varepsilon}_{0},v^{\varepsilon}_{0},G^{\varepsilon}_{0,j})$ is a solution sequence of the system \eqref{2.8}. We choose $(f^{1},h^{1},v^{1},G^{1}_{j}) $ to be  $(f^{\varepsilon},h^{\varepsilon},v^{\varepsilon},G^{\varepsilon}_{j})$.  We also replace $(f^{2}_{0},h^{2}_{0},v^{2}_{0},G^{2}_{0,j}) $ by the rectilinear solution $U\equiv (\dot{f},\dot{h},\dot{v},\dot{G}_{j})$ defined in section 1.1. Obviously, $U$ is always the solution of the system \eqref{2.8}. For simplicity, we always take this steady-state $U$ as the solution of the system \eqref{2.8}, i.e.,  $(f^{2},h^{2},v^{2},G^{2}_{j})(t) =U$ for $t\geq 0$.

Fix $n \in \mathbb{N}$ so that $n>C$. Applying Lemma 4.2 with this $n, T_{0}=t_{0} / 2, k \geq 3$, and $\alpha=2$, we can find $f^{L}, h^{L}, v^{L},G^{L}_{j}$ solving \eqref{3.5} so that
\begin{equation}\label{5.12}
	\|(f^{L}_{0},h^{L}_{0}, v^{L}_{0},G^{L}_{0,j})\|_{H^{k}}\lesssim \frac{1}{n},
\end{equation}
but
\begin{equation}\label{5.13}
	\|(f^{L}(t),h^{L}(t), v^{L}(t),G^{L}_{j}(t))\|_{H^{3}} \geq 2 \quad \text { for } t \geq t_{0} / 2.
\end{equation}

We define $\tilde{f}_{0}^{\varepsilon}=f_{0}^{\varepsilon}-\dot{f} :=\varepsilon f^{L}_{0}$, $\tilde{h}_{0}^{\varepsilon}=h_{0}^{\varepsilon}-\dot{h}:=\varepsilon h^{L}_{0}$, $\tilde{v}_{0}^{\varepsilon}=v_{0}^{\varepsilon}-\dot{v}:=\varepsilon v^{L}_{0}$  and $\tilde{G}_{0,j}^{\varepsilon}=G_{0,j}^{\varepsilon}-\dot{G}_{j}:=\varepsilon G^{L}_{0,j}$. Then for $\varepsilon<\delta n$, we have $\|(\tilde{f}_{0}^{\varepsilon}, \tilde{h}_{0}^{\varepsilon},  \tilde{v}_{0}^{\varepsilon},\tilde{G}_{0,j}^{\varepsilon})\|_{H^{k}}<\delta$, so according to Definition 2.2, there exists a solution $\left(\tilde{f}^{\varepsilon}:=f^{\varepsilon}- \dot{f} , \tilde{h}^{\varepsilon}:=h^{\varepsilon}-\dot{h}, \tilde{v}^{\varepsilon}:= v^{\varepsilon}-\dot{v},\tilde{G}^{\varepsilon}_{j}:= G^{\varepsilon}_{j}-\dot{G}_{j} \right) \in L^{\infty}\left(\left[0, t_{0}\right] ; H^{3}(\Omega)\right)$ that solves \eqref{5.8}-\eqref{5.9} with  initial data  $(\tilde{f}_{0}^{\varepsilon}, \tilde{h}_{0}^{\varepsilon}, \tilde{v}_{0}^{\varepsilon},\tilde{G}_{0,j}^{\varepsilon})$  and satisfies the inequality
\begin{equation}\label{5.14}
	\begin{aligned}
		\sup _{0 \leq t \leq t_{0}}\left\|\left(\tilde{f}^{\varepsilon}, \tilde{h}^{\varepsilon}, \tilde{v}^{\varepsilon}, \tilde{G}^{\varepsilon}_{j} \right)(t)\right\|_{H^{3}} & \leq C\left(\left\|\left(f_{0}^{\varepsilon}, h_{0}^{\varepsilon}, v_{0}^{\varepsilon},\tilde{G}_{0,j}^{\varepsilon}\right)\right\|_{H^{k}}\right)  \\
		& \leq C \varepsilon \frac{1}{n}<\varepsilon.
	\end{aligned}
\end{equation}

Now define the rescaled functions $\bar{f}^{\varepsilon}=\tilde{f}^{\varepsilon} / \varepsilon, \bar{h}^{\varepsilon}=\tilde{h}^{\varepsilon} / \varepsilon , \bar{v}^{\varepsilon}=\tilde{v}^{\varepsilon} / \varepsilon,\bar{G}^{\varepsilon}_{j}=\tilde{G}^{\varepsilon}_{j} / \varepsilon$; rescaling \eqref{5.14} then shows that
\begin{equation}\label{5.15}
	\sup _{0 \leq t \leq t_{0}}\left\|(\bar{f}^{\varepsilon}, \bar{h}^{\varepsilon}, \bar{v}^{\varepsilon},\bar{G}^{\varepsilon}_{j})(t)\right\|_{H^{3}}<1.
\end{equation}

By construction, we know that  $(\bar{f}^{\varepsilon}_{0}, \bar{h}^{\varepsilon}_{0}, \bar{v}^{\varepsilon}_{0},\bar{G}^{\varepsilon}_{0j})=(f^{L}_{0}, h^{L}_{0}, v^{L}_{0},G^{L}_{0j})$. Next, we are going  to show that the rescaled functions  $(\bar{f}^{\varepsilon}, \bar{h}^{\varepsilon}, \bar{v}^{\varepsilon},\bar{G}^{\varepsilon}_{j})$ converge as $\varepsilon \rightarrow 0$ to the solutions $(f^{L}, h^{L}, v^{L},G^{L}_{j})$ of the linearized equations \eqref{3.2}.

Now we are going to  reformulate \eqref{5.8}-\eqref{5.9} in terms of rescaled functions $(\bar{f}^{\varepsilon}, \bar{h}^{\varepsilon}, \bar{v}^{\varepsilon},\bar{G}^{\varepsilon}_{j})$ and   show  some convergence results. The third equation in \eqref{5.5} can be rewritten in terms of rescaled function $(\bar{f}^{\varepsilon}, \bar{h}^{\varepsilon}, \bar{v}^{\varepsilon},\bar{G}^{\varepsilon}_{j})$  as follows:
\begin{equation}\label{5.16}
	\partial_{t} \bar{f} ^{\varepsilon}+\dot{v}_{1} \partial_{1} \bar{f}^{\varepsilon}-\bar{v}^{\varepsilon}_{2} =\varepsilon \bar{v}^{\varepsilon} \cdot n^{\varepsilon}. 
\end{equation}
where  $n^{\varepsilon}=\frac{(-\varepsilon \partial_{1} \bar{f}^{\varepsilon}, 0)}{\varepsilon}=(- \partial_{1} \bar{f}^{\varepsilon}, 0)$ is well defined and uniformly bounded in $L^{\infty}\left(\left[0, t_{0}\right] ; H^{2}(\Gamma)\right)$ since
\begin{equation}\label{5.17}
	\|n^{\varepsilon}\|_{H^{2}(\Gamma)}\leq \| \bar{f}^{\varepsilon}\|_{H^{3}(\Gamma)}<1,
\end{equation}
where the above inequality holds according to \eqref{5.15}.
Hence, by \eqref{5.15} and\eqref{5.17}, we obtain
\begin{equation}\label{5.18}
	\lim _{\varepsilon \rightarrow 0} \sup _{0 \leq t \leq t_{0}}\left\|\partial_{t} \bar{f}^{\varepsilon}+\dot{v}_{1} \partial_{1} \bar{f}^{\varepsilon}-\bar{v}^{\varepsilon}_{2}\right\|_{H^{2}}=0 
\end{equation}
and 
\begin{equation}\label{5.188}
	\sup _{0 \leq t \leq t_{0}}\left\|\partial_{t} \bar{f}^{\varepsilon}(t)\right\|_{H^{2}}\leq |\dot{v}_{1}| \sup _{0 \leq t \leq t_{0}}\left\|\partial_{1} \bar{f}^{\varepsilon}(t)\right\|_{H^{2}}+\sup _{0 \leq t \leq t_{0}}\left\|\bar{v}^{\varepsilon}_{2}\right\|_{H^{2}}\leq C
\end{equation}

Expanding the first equation in \eqref{5.8}  implies that
\begin{equation}\label{5.19}
	\partial_{t}\bar{h}^{\varepsilon}+(\dot{v} \cdot \nabla) \bar{h}^{\varepsilon}+ \nabla \cdot \bar{v}^{\varepsilon}=-\varepsilon(M^{\varepsilon} \cdot \nabla) \bar{h}^{\varepsilon}+\varepsilon(B^{\varepsilon})^{T} \nabla \cdot \bar{v}^{\varepsilon}, 
\end{equation}
where we define $M^{\varepsilon}$ as follows
\begin{equation}\label{5.20}
	M^{\varepsilon}=\bar{v}^{\varepsilon}-B^{\varepsilon}(\dot{v}+ \varepsilon \bar{v}^{\varepsilon})-(0,\frac{\partial_{t} \psi^{\varepsilon}}{1+ \varepsilon \partial_{2} \psi^{\varepsilon}}), ~\text{with}~\psi^{\varepsilon}= \theta \bar{f}^{\varepsilon} ~\text{defined in \eqref{2.2} }.
\end{equation}
In order to estimate the bound of $M^{\varepsilon}$, we firstly estimate the bound of $B^{\varepsilon}$. We  assume that $\varepsilon$ is sufficiently small so that
$\varepsilon<1 /\left(2 C_{1}\right)$, where $C_{1}>0$ is the best constant in the inequality $\|U V\|_{H^{2}} \leq$ $C_{1}\|U\|_{H^{2}}\|V\|_{H^{2}}$ for $3 \times 3$ matrix-valued functions $U, V$. This assumption guarantees that $B^{\varepsilon}:=(I-(I+\varepsilon \nabla  \Psi^{\varepsilon})^{-1})/\varepsilon $ is well defined and uniformly bounded in $L^{\infty}\left(\left[0, t_{0}\right] ; H^{2}(\Omega)\right)$ since
\begin{equation}\label{5.21}
	\begin{aligned}
		\left\|{B}^{\varepsilon}\right\|_{H^{2}} & =\left\|\sum_{n=1}^{\infty}(-\varepsilon)^{n-1}(\nabla  \Psi^{\varepsilon})^{n}\right\|_{H^{2}} \leq \sum_{n=1}^{\infty} \varepsilon^{n-1}\left\|(\nabla  \Psi^{\varepsilon})^{n}\right\|_{H^{2}}  \\
		& \leq \sum_{n=1}^{\infty}\left(\varepsilon C_{1}\right)^{n-1}\left\|\nabla  \Psi^{\varepsilon}\right\|_{H^{2}}^{n} \leq \sum_{n=1}^{\infty} \frac{1}{2^{n-1}}\left\|\psi^{\varepsilon}\right\|_{H^{3}}^{n}\\
		&\leq \sum_{n=1}^{\infty} \frac{1}{2^{n-1}}\left\|\bar{f}^{\varepsilon}\right\|_{H^{3}}^{n}
		<\sum_{n=1}^{\infty} \frac{1}{2^{n-1}}=2,
	\end{aligned}
\end{equation}
whereas we shows that 
\begin{equation}\label{5.22}
	\begin{aligned}
		\left\|{M}^{\varepsilon}\right\|_{H^{2}}& \leq \| \bar{v}^{\varepsilon}\|_{H^{2}}+  |\dot{v}| \|  B^{\varepsilon}\|_{H^{2}} +\varepsilon \|  B^{\varepsilon}\|_{H^{2}}\|\bar{v}^{\varepsilon}\|_{H^{2}}+\|\partial_{t}  \psi^{\varepsilon}\|_{H^{2}}\\
		&\leq \| \bar{v}^{\varepsilon}\|_{H^{2}}+   |\dot{v}|  \|  B^{\varepsilon}\|_{H^{2}} +\varepsilon \|  B^{\varepsilon}\|_{H^{2}}\|\bar{v}^{\varepsilon}\|_{H^{2}}+\|\partial_{t}  \bar{f}^{\varepsilon}\|_{H^{2}}\\
		&\leq C.
	\end{aligned}
\end{equation}

Therefore, by employing \eqref{5.15},\eqref{5.21} and \eqref{5.22}, we get by \eqref{5.19}
\begin{equation}\label{5.23}
	\lim _{\varepsilon \rightarrow 0}	\sup _{0 \leq t \leq t_{0}}\left\|\partial_{t} \bar{h}^{\varepsilon}+(\dot{v} \cdot \nabla) \bar{h}^{\varepsilon}+ \nabla \cdot \bar{v}^{\varepsilon} \right\|_{H^{2}}=0, 
\end{equation}
and 
\begin{equation}\label{5.24}
	\sup _{0 \leq t \leq t_{0}}\left\|\partial_{t} \bar{h}^{\varepsilon}(t)\right\|_{H^{2}}<C.
\end{equation}

Expanding the second equation in \eqref{5.8}, we find that
\begin{equation}\label{5.251}
	\begin{aligned}
		&\partial_{t} \bar{v}^{\varepsilon}+(\dot{v} \cdot \nabla) \bar{v}^{\varepsilon}+ c^{2}\nabla \bar{h}^{\varepsilon}-\sum_{j=1}^{2} (  \dot{G}_{j} \cdot \nabla )\tilde{G}_{j}=-\varepsilon(M^{\varepsilon} \cdot \nabla) \bar{v}^{\varepsilon}\\
		&+\sum_{j=1}^{2}\varepsilon (  N^{\varepsilon}\cdot \nabla )\tilde{G}_{j}+\varepsilon c^{2}(B^{\varepsilon})^{T}\nabla \bar{h}^{\varepsilon}+ \varepsilon\mathcal{R}^{\varepsilon}(\nabla  \bar{h}^{\varepsilon}- \varepsilon(B^{\varepsilon})^{T}\nabla  \bar{h}^{\varepsilon}), 
	\end{aligned}
\end{equation}
where  we define $N^{\varepsilon}$ as follows
\begin{equation}
	N^{\varepsilon}=\bar{G}^{\varepsilon}_{j}-B^{\varepsilon}(\dot{G}_{j}+ \varepsilon \bar{G}^{\varepsilon}_{j}).
\end{equation}
Making full use of \eqref{5.23},  we show that 
\begin{equation}
	\begin{aligned}
		\left\|{N}^{\varepsilon}\right\|_{H^{2}}& \leq \| \bar{G}^{\varepsilon}_{j}\|_{H^{2}}+  |\dot{G}_{j}| \|  B^{\varepsilon}\|_{H^{2}} +\varepsilon \|  B^{\varepsilon}\|_{H^{2}}\|\bar{G}^{\varepsilon}_{j}\|_{H^{2}}\\
		&\leq C.
	\end{aligned}
\end{equation}

We also define the normalized remainder function by
\begin{equation}\label{5.26}
	\mathcal{R}^{\varepsilon}(x, t)  =\frac{ (c^{2})^{\prime}(\dot{h}+ (1-\alpha) \varepsilon \bar{h}^{\varepsilon}) \varepsilon \bar{h}^{\varepsilon}}{\varepsilon}= (c^{2})^{\prime}(\dot{h}+ (1-\alpha) \varepsilon \bar{h}^{\varepsilon})\bar{h}^{\varepsilon}.
\end{equation}
It is easy to show  that $\dot{h}+ (1-\alpha) \varepsilon \bar{h}^{\varepsilon}$ is bounded above  by a positive constant. Taking use of  \eqref{5.15} implies
\begin{equation}\label{5.28}
	\sup _{0 \leq t \leq t_{0}}\left\|	\mathcal{R}^{\varepsilon}(x, t)\right\|_{H^{3}} \leq C.
\end{equation}

Therefore, from \eqref{5.28} and \eqref{5.15}, we deduce  by \eqref{5.251} that
\begin{equation}\label{5.29}
	\lim _{\varepsilon \rightarrow 0} \sup _{0 \leq t \leq t_{0}}\left\|\partial_{t} \bar{v}^{\varepsilon}+(\dot{v} \cdot \nabla) \bar{v}^{\varepsilon}+ c^{2}\nabla \bar{h}^{\varepsilon}-\sum_{j=1}^{2} (  \dot{G}_{j} \cdot \nabla )\tilde{G}_{j}\right\|_{H^{2}}=0 
\end{equation}
and
\begin{equation}\label{5.30}
	\sup _{0 \leq t \leq t_{0}}\left\|\partial_{t} \bar{v}^{\varepsilon}(t)\right\|_{H^{2}}<C.
\end{equation}

Finally, we expand  the third  equation in \eqref{5.8} to find that
\begin{equation}\label{5.25}
	\begin{aligned}
		\partial_{t} \bar{G}^{\varepsilon}_{j}+(\dot{v} \cdot \nabla) \bar{G}^{\varepsilon}_{j}- (  \dot{G}_{j} \cdot \nabla )\bar{v}^{\varepsilon}=-\varepsilon(M^{\varepsilon} \cdot \nabla) \bar{G}^{\varepsilon}_{j}+\varepsilon (  N^{\varepsilon}\cdot \nabla )\bar{v}^{\varepsilon}. 
	\end{aligned}
\end{equation}
In according with  \eqref{5.22} and \eqref{5.29}, we deduce that 
\begin{equation}\label{bound1}
	\sup _{0 \leq t \leq t_{0}}\left\|(M^{\varepsilon} \cdot \nabla) \bar{G}^{\varepsilon}_{j}+(  N^{\varepsilon}\cdot \nabla )\bar{v}^{\varepsilon}\right\|_{H^{2}} \leq C.
\end{equation}
Therefore from \eqref{5.28}, \eqref{5.15} and \eqref{bound1}, we deduce that 
\begin{equation}
	\lim _{\varepsilon \rightarrow 0} \sup _{0 \leq t \leq t_{0}}\left\|\partial_{t} \bar{G}^{\varepsilon}_{j}+(\dot{v} \cdot \nabla) \bar{G}^{\varepsilon}_{j}- (  \dot{G}_{j} \cdot \nabla )\bar{v}^{\varepsilon}\right\|_{H^{2}}=0 
\end{equation}
and
\begin{equation}
	\sup _{0 \leq t \leq t_{0}}\left\|\partial_{t} \bar{G}^{\varepsilon}_{j}(t)\right\|_{H^{2}}<C.
\end{equation}

Next, we deal with some convergence results for the jump conditions. For the first equation in \eqref{5.9} we rewrite the normal vector $n$  as follows
\begin{align*}\label{5.36}
	n=e_{2}+ \tilde{n}^{\varepsilon} : =e_{2}+\varepsilon n^{\varepsilon},~n^{\varepsilon}=(-\partial_{1} \bar{f}^{\varepsilon},0).
\end{align*}
Noting that $\dot{v}^{+}\cdot e_{2}=0$ and $\dot{G}_{j}^{+}\cdot e_{2}=0$, so we may rewrite the second equation in \eqref{5.9} as
\begin{equation}\label{5.37}
	\left(\bar{v}^{+,\varepsilon}-\bar{v}^{-,\varepsilon}\right) \cdot e_{2}+(\dot{v}^{+}-\dot{v}^{-})\cdot n^{\varepsilon}=-\varepsilon(\bar{v}^{+,\varepsilon}-\bar{v}^{-,\varepsilon})\cdot n^{\varepsilon}
\end{equation}
and
\begin{equation}\label{5.37}
	\bar{G}^{+,\varepsilon}_{j} \cdot e_{2}  + \dot{G}^{+}_{j}\cdot  n^{\varepsilon}=-\varepsilon \bar{G}^{+,\varepsilon}_{j}\cdot n^{\varepsilon} ,~
	\bar{G}^{-,\varepsilon}_{j} \cdot e_{2}  + \dot{G}^{-}_{j}\cdot  n^{\varepsilon}=-\varepsilon \bar{G}^{-,\varepsilon}_{j}\cdot n^{\varepsilon}.
\end{equation}

Since  
$$\left\|n^{\varepsilon}(t)\right\|_{L^{\infty}}\leq \| n^{\varepsilon}\|_{H^{3}(\Gamma)}<1 $$
and $$\left\|(\bar{v}^{\varepsilon},\bar{G}^{\varepsilon}_{j})(t)\right\|_{L^{\infty}}\leq\left\|( \bar{v}^{\varepsilon},\bar{G}^{\varepsilon}_{j})(t)\right\|_{H^{3}}<1$$ is bounded uniformly, we find that
\begin{equation}\label{5.38}
	\sup _{0 \leq t \leq t_{0}}\left\|e_{2} \cdot(\bar{v}^{+,\varepsilon}-\bar{v}^{-,\varepsilon})+ (\dot{v}^{+}-\dot{v}^{-}) \cdot  n^{\varepsilon}\right\|_{L^{\infty}} \rightarrow 0 \quad \text { as } \varepsilon \rightarrow 0
\end{equation}
and
\begin{equation}\label{5.38}
	\sup _{0 \leq t \leq t_{0}} \|e_{2} \cdot \bar{G}^{+,\varepsilon}_{j} + \dot{G}^{+}_{j}\cdot  n^{\varepsilon} \|_{L^{\infty}} \rightarrow 0 \quad \text { as } \varepsilon \rightarrow 0
\end{equation}
and
\begin{equation}\label{5.38}
	\sup _{0 \leq t \leq t_{0}} \|e_{2} \cdot \bar{G}^{-,\varepsilon}_{j} + \dot{G}^{-}_{j}\cdot  n^{\varepsilon} \|_{L^{\infty}} \rightarrow 0 \quad \text { as } \varepsilon \rightarrow 0.
\end{equation}

Therefore we have 
\begin{equation}\label{5.39}
	[\bar{v}^{\varepsilon} \cdot e_{2}]= 2\dot{v}^{+}_{1} \partial_{1} f^{\varepsilon}~on~ \Gamma.
\end{equation}
and
\begin{equation}\label{5.39}
	 \bar{G}^{+,\varepsilon}_{j} \cdot e_{2}= \dot{G}^{+}_{1j} \partial_{1} f^{\varepsilon},~\bar{G}^{-,\varepsilon}_{j} \cdot e_{2}= \dot{G}^{-}_{1j} \partial_{1} f^{\varepsilon}~on~ \Gamma.
\end{equation}

We expand the third equation in \eqref{5.9} as follows
\begin{equation}\label{5.31}
	\dot{h}^{+}+ \varepsilon \bar{h}^{+, \varepsilon}=\dot{h}^{-}+ \varepsilon \bar{h}^{-,\varepsilon} ~on~ \Gamma.
\end{equation}
Since $\dot{h}^{+}=\dot{h}^{-}$, we may eliminate these two terms from equation \eqref{5.31} and divide both sides of the result equation by $\varepsilon$ to get
\begin{equation}\label{5.35}
	\bar{h}^{+,\varepsilon}=\bar{h}^{-,\varepsilon} ~on~ \Gamma.
\end{equation}

According to the bound \eqref{5.15} and sequential weak-$*$ compactness, we have that up to the extraction of a subsequence (which we still denote the subsequence by using $\varepsilon$ )
\begin{equation}\label{5.40}
	(\bar{f}^{\varepsilon}, \bar{h}^{\varepsilon}, \bar{v}^{\varepsilon},\bar{G}^{\varepsilon}_{j}) \stackrel{*}\rightharpoonup (f^{\star}, h^{\star}, v^{\star},G^{\star}_{j}) \quad \text { weakly }-* \text { in } L^{\infty}\left(\left[0, t_{0}\right] ; H^{3}(\Omega)\right) \quad \text { as } \varepsilon \rightarrow 0.
\end{equation}
By lower semicontinuity, we have
\begin{equation}\label{5.41}
	\sup _{0 \leq t \leq t_{0}}\left\|(f^{\star},h^{\star},v^{\star},G^{\star}_{j})(t)\right\|_{H^{3}} \leq 1. 
\end{equation}
In according with \eqref{5.188}, \eqref{5.24}, and \eqref{5.30}, we get
\begin{equation}\label{5.42}
	\limsup _{\varepsilon \rightarrow 0} \sup _{0 \leq t \leq t_{0}}\left\|(\partial_{t} \bar{f}^{\varepsilon}, \partial_{t} \bar{h}^{\varepsilon}, \partial_{t} \bar{v}^{\varepsilon},\partial_{t} \bar{G}^{\varepsilon}_{j})(t)\right\|_{H^{2}}<\infty.
\end{equation}

By Lions-Abin lemma in \cite{Simon}, we then have that the sequence $\left\{\left(f^{\varepsilon}, h^{\varepsilon}, v^{\varepsilon},G^{\varepsilon}_{j}\right)\right\}$ is strongly precompact in the space $L^{\infty}\left(\left[0, t_{0}\right] ; H^{8/3}(\Omega)\right)$, thus
\begin{equation}\label{5.43}
	(\bar{f}^{\varepsilon},\bar{h}^{\varepsilon}, \bar{v}^{\varepsilon},\bar{G}^{\varepsilon}_{j}) \rightarrow (f^{\star}, h^{\star}, v^{\star},G^{\star}_{j})\quad \text { strongly in } L^{\infty}\left(\left[0, t_{0}\right] ; H^{8 / 3}(\Omega)\right). 
\end{equation}
This strong convergence, together with  \eqref{5.18}, \eqref{5.24},\eqref{5.30}, implies that
\begin{equation}\label{5.44}
	(\partial_{t} \bar{f}^{\varepsilon}, \partial_{t} \bar{h}^{\varepsilon}, \partial_{t} \bar{v}^{\varepsilon}, \partial_{t} \bar{G}^{\varepsilon}_{j}) \rightarrow\left(\partial_{t} f^{\star},  \partial_{t} h^{\star},\partial_{t} v^{\star}, \partial_{t}G^{\star}_{j} \right) \text { strongly in } L^{\infty}\left(\left[0, t_{0}\right] ; H^{5 /3}(\Omega)\right),
\end{equation}
The index $\frac{8}{3}$ and $\frac{5}{3}$ are sufficient large to ensure $L^{\infty}([0,t_{0}];L^{\infty})$ convergence of $\left\{\left(f^{\varepsilon}, h^{\varepsilon}, v^{\varepsilon},G^{\varepsilon}_{j}\right)\right\}$ as $\varepsilon$ goes to zero, thus we have 
\begin{equation}\label{5.45}
	\left\{
	\begin{aligned}
		&\partial_{t}h^{\star}+(\dot{v} \cdot \nabla) h^{\star}+ \nabla \cdot v^{\star}=0& \text { in } \Omega, \\
		&\partial_{t} v^{\star}+(\dot{v} \cdot \nabla) v^{\star}+ c^{2}\nabla h^{\star}=\sum_{j=1}^{2}  (  \dot{G}_{j} \cdot \nabla )G^{\star}_{j} & \text { in } \Omega, \\ 
		&\partial_{t} G^{\star}_{j}+(\dot{v} \cdot \nabla) G^{\star}_{j}=(  \dot{G}_{j} \cdot \nabla )v^{\star}& \text { in } \Omega,\\
		&\partial_{t} f^{\star}+\dot{v}_{1} \partial_{1} f^{\star}-v^{\star}_{2}=0& \text { on } \Gamma,
	\end{aligned}
	\right.
\end{equation}
and
\begin{equation}\label{5.46}
	\begin{aligned}
	\left(v^{+,\star}-v^{-,\star}\right) \cdot e_{2}= 2\dot{v}^{+}_{1} \partial_{1} f^{\star}~~ & \text { on }~\Gamma.\\
	G^{+,\star}_{j} \cdot e_{2}= \dot{G}^{+}_{1j} \partial_{1} f^{\star},~G^{-,\star}_{j}  \cdot e_{2}= \dot{G}^{-}_{1j} \partial_{1} f^{\star}~~& \text { on }~\Gamma,\\
		h^{+,\star}=h^{-,\star}~~ & \text { on }~\Gamma.
	\end{aligned}
\end{equation}
We also pass to the limit in the initial conditions $(\bar{f}^{\varepsilon}_{0}, \bar{h}^{\varepsilon}_{0}, \bar{v}^{\varepsilon}_{0},\bar{G}^{\varepsilon}_{0,j})=(f^{L}_{0}, h^{L}_{0}, v^{L}_{0},G^{L}_{0,j})$ as $\varepsilon$ goes to 0 to obtain
\begin{equation*}
	(f^{\star}_{0}, v^{\star}_{0}, h^{\star}_{0},G^{\star}_{0,j}) =(f^{L}_{0}, h^{L}_{0}, v^{L}_{0},G^{L}_{0,j}).
\end{equation*}
Now we can see that $(f^{\star}, v^{\star}, h^{\star}, G^{\star}_{j} )(t,x)$ are solutions to \eqref{2.2} with boundary conditions \eqref{2.3} satisfying the same initial data. In according with the uniqueness result in lemma 4.1, we
have
\begin{equation}\label{5.47}
	(f^{\star}, v^{\star}, h^{\star},G^{\star}_{j})(t,x) =(f^{L}, v^{L}, h^{L},G^{L}_{j})(t,x). 
\end{equation}
Therefore  we combine  inqualities \eqref{5.41} and \eqref{5.13} to get
\begin{equation}\label{5.48}
	2=\alpha\leq \sup _{0 \leq t \leq t_{0}}\left\|(f^{\star},  h^{\star},v^{\star},G^{\star}_{j})(t)\right\|_{H^{3}} \leq 1. 
\end{equation}
which is a contradiction. Therefore, the proof of Theorem 2.3 is completed.

\phantomsection
\addcontentsline{toc}{section}{\refname}


\begin{thebibliography}{99}


	\bibitem{Castro}
A. Castro A, D. Cordoba, C. Fefferman, F. Gancedo, M. Lopez-Fernandez, Rayleigh-Taylor breakdown for the Muskat problem with applications to water waves, Annals of Mathematics 175 (2012), 909-948.



\bibitem{Chandrasekhar}
S. Chandrasekhar,  Hydrodynamic and Hydromagnetic Stability,
Oxford University Press, New York, 1961.

\bibitem{Wang}
G. Q. Chen, Y. G. Wang,  Existence and Stability of Compressible Current-Vortex Sheets in Three-Dimensional Magnetohydrodynamics. Arch. Ration. Mech. Anal., 187 (2008), 369-408.



\bibitem{Ch}
S.H. Chen and M. G. Kivelson, On nonsinusoidal waves at the Earth's
magnetopause, Geophys. Res. Lett., 20,  2699-2702, 1993.

\bibitem{Hu}
R. M. Chen, J. Hu, and D. Wang, Linear stability of compressible vortex sheets in two-dimensional elastodynamics, Adv. Math. 311 (2017), 18-60.

\bibitem{Chen}
R. M. Chen, J. Hu, and D. Wang, Linear stability of compressible vortex sheets in 2D elastodynamics: variable coefficients. Math. Ann., 376(2020), 863-912.
\bibitem{chen1}
R. M. Chen, F. M. Huang, and D. Wang, D. F. Yuan, Stabilization effect of elasticity on three-dimensional compressible vortex sheets, J. Math.PuresAppl., 172(2023), 105-138.


\bibitem{Coulombel}
J.-F. Coulombel and P. Secchi, The stability of compressible vortex sheets in two space dimensions, Indiana Univ. Math. J. 53 (2004), no. 4, 941-1012.


\bibitem{Secchi}
J.-F. Coulombel and P. Secchi, Nonlinear compressible vortex sheets in two space dimensions, Ann. Sci.cole Norm. Supr. (4) 41 (2008), no. 1, 85-139.

\bibitem{Coulombel2}
	Coulombel, J.-F.; Morando, A.; Secchi, P.; Trebeschi, P. A priori estimates for 3D incompressible
	current-vortex sheets. Comm. Math. Phys. 311 (2012), 247–275.


\bibitem{Dungey}
J. W. Dungey, Electrodynamics of the outer atmosphere, in Proceedings
of the Ionosphere, The Physical Society of
London, London, 255-265, 1955.


\bibitem{Ershkovich}
A. I. Ershkovich, Solar wind interaction with the tail of comet
Kohoutek, Planet. Space Sci., 24, 287-290, 1976.







\bibitem{E1}
D. G. Ebin, \emph{The equations of motion of a perfect fluid with free boundary are not well-posed}. Communications in Partial Differential Equations, 12 (1987), 1175-1201.
\bibitem{E2}
D. G. Ebin, \emph{Ill-posedness of the rayleigh-taylor and helmholtz
	problems for incompressible fluids}. Communications in Partial Differential Equations, 13 (1988), 1265-1295.


 
\bibitem{Fejer}
J. A. Fejer and J. W. Miles, On the stability of a plane vortex sheet with respect to three-dimensional disturbances, J. Fluid Mech. 15 (1963), 335-336

\bibitem{Guo11}
Y. Guo, W. Strauss, Instability of periodic BGK equilibria, Comm. Pure Appl. Math. 48 (1995) 861–894.

\bibitem{Guo1}
Y. Guo, I. Tice, Compressible, inviscid Rayleigh–Taylor instability, Indiana Univ. Math. J. 60 (2011) 677-712.
\bibitem{Guo2}
Y. Guo, I. Tice, Linear Rayleigh–Taylor instability for viscous, compressible fluids, SIAM J. Math. Anal. 42 (2011) 1688–1720.



\bibitem{Ha}
		H. Hasegawa, M. Fujimoto, T.D. Phan, H. Reme, A. Balogh, M. W. Dunlop, C. Hashimoto, R. TanDokoro, Transport of solar wind into Earth's magnetosphere through rolled-up Kelvin–Helmholtz vortices, Nature, 430(2004), 755-758.

\bibitem{Helmholtz}
H.L.F. Helmholtz, On the discontinuous movements of fluids. Sitz.ber. Preuss. Akad.Wiss. Berl. Philos.-Hist. Kl. 23,(1868) 215–228. 



\bibitem{Guo3}
H.J. Hwang, Y. Guo, On the dynamical Rayleigh–Taylor instability, Arch. Ration. Mech. Anal. 167 (2003) 235–253.
\bibitem{Jiang}
F. Jiang, S. Jiang, W.C Zhan, Instability of the abstract Rayleigh-Taylor problem and applications, Mathematical models and methods in applied sciences. 30 (2020), no. 12, 2299-2388.

\bibitem{Kelvin}
L. Kelvin (W.T. Thomson), Hydrokinetic solutions and observations. Philos. Mag. 42, 362–377 (1871)


\bibitem{L}
L. D. Landau, On the stability of tangential discontinuities in a
compressible fluid, Akad. Kauk. L. S. S. S. R. Comptes Rendus
(Doklndy) 44, 139 (1944).








\bibitem{Miles}
J. W. Miles, On the disturbed motion of a plane vortex sheet, J. Fluid Mech. 4 (1958), 538-552




\bibitem{Morando2}
	A. Morando,Y. Trakhinin, P. Trebeschi,  Stability of incompressible current-vortex sheets.
	J. Math. Anal. Appl. 347 (2008),  502–520.
	







\bibitem{Trebeschi1}
A. Morando and P. Trebeschi, Two-dimensional vortex sheets for the nonisentropic Euler equations: linear stability, J. Hyperbolic Differ. Equ. 5
(2008), no. 3, 487-518.


\bibitem{Trebeschi2}
A. Morando, P. Trebeschi, and T.Wang, Two-dimensional vortex sheets for the nonisentropic Euler equations: nonlinear stability, J. Differential
Equations 266 (2019), no. 9, 5397-5430

\bibitem{Morando}
A. Morando, P. Secchi, P. Trebeschi, On the evolution equation of compressible vortex sheets, Mathematische Nachrichten 293 (2020) 945-969.






\bibitem{Parker}
E. N. Parker, Dynamics of the interplanetary gas and magnetic
fields, Astrophys. J., 128, 664-678, 1958.







\bibitem{Syrovatskij}
	S. I. Syrovatskij, The stability of tangential discontinuities in a magnetohydrodynamic medium. Z. Eksperim. Teoret. Fiz. 24 (1953),  622–629.
	



\bibitem{Simon}
J. Simon, Comact sets in the space $L^{p}(0,T);B$, Ann. Mat. Pura Appl. (4)146 (1987), 65-96.

\bibitem{Sturrock}		
P. A. Sturrock, and R. E. Hartle, Two-fluid model of the solar
wind, Phys. Rev. Lett., 16, 628-636, 1966.	

\bibitem{Sun}	
	Y. Z. Sun, W. Wang, Z. F. Zhang, Nonlinear Stability of the Current-Vortex Sheet to the Incompressible MHD Equations. Communications on Pure and Applied Mathematics, 71 (2018), 0356-0403.

\bibitem{Trakhinin2}
Y. Trakhinin,  Existence of compressible current-vortex sheets: variable coefficients linear analysis. Arch. Ration. Mech. Anal. 177 (2005), 331-366.


\bibitem{Trakhinin}
Y. Trakhinin, The existence of current-vortex sheets in ideal compressible magnetohydrodynamics. Arch. Ration. Mech. Anal., 191 (2009), 245-310.


\bibitem{Xin}
Y.J. Wang and Z.P. Xin, Existence of multi-dimensional contact discontinuities for the ideal compressible
magnetohydrodynamics,DOI: 10.1002/cpa.22148.

\bibitem{Xie}
B.Q. Xie, B. Zhao and D.W. Huang, Ill-posedness of the Kelvin-Helmholtz problem for incompressible MHD fluids, submitted.
\bibitem{Xie1}
B.Q. Xie, B. Zhao, Ill-posedness of the Kelvin-Helmholtz problem for compressible Euler fluids, Arxiv:2407.01938.

\bibitem{Yu1}
Y.-G. Wang and F. Yu, Stabilization effect of magnetic fields on two-dimensional compressible current-vortex sheets, Arch. Ration. Mech. Anal. 208 (2013), no. 2, 341-389.









\end{thebibliography}
\end{document}